\make@display@tag\ltx@label{#1}}}}}}
\title{Secondary Stiefel--Whitney numbers and corresponding cobordism groups}
\author{Viktor Lavrukhin}
\address{PDMI RAS, 27 Fontanka quay, 191023, Saint Petersburg, Russia}
\email{}
\newcommand\Z{\mathbb{Z}}
\newcommand\R{\mathbb{R}}
\renewcommand\Theta{\vartheta}
\newcommand\id{\mathrm{id}}
\newcommand\Sq{\mathrm{Sq}}
\newcommand\hofib{\mathrm{hofib}}
\newcommand\Gr{\mathrm{Gr}}
\DeclareMathOperator{\Th}{Th}
\newcommand{\defeq}{\vcentcolon=}
\newcommand{\cob}{\mathsf{cob}}
\newtheorem{theorem}{Theorem}
\newtheorem{lemma}[theorem]{Lemma}
\newtheorem{corollary}[theorem]{Corollary}
\newtheorem{proposition}[theorem]{Proposition}
\theoremstyle{definition}
\newtheorem{definition}[theorem]{Definition}
\newtheorem{remark}[theorem]{Remark} 
\newtheorem{example}[theorem]{Example}
\numberwithin{theorem}{section} 
\numberwithin{equation}{section}
\begin{document}
\maketitle
\begin{abstract}
For every relation $R$ between Stiefel--Whitney numbers of closed $(n+1)$-manifolds we consider an associated invariant $\varkappa_R$ of null-cobordant $n$-manifolds with a certain additional structure. For $n=2k-1$ and $R = w_{n+1}+v_k^2$ the invariant \(\varkappa_R\) equals the Kervaire semi-characteristic. In addition, we construct the cobordism group $\Omega_n^R$, which extends the unoriented cobordism group $\Omega_n^O$. We show that \(\varkappa_R\) is a complete invariant of \(R\)-cobordism classes of null-cobordant \(n\)-manifolds. We prove that our invariant $\varkappa_R$ and \(R\)-cobordism class of manifold are quadratic in the sense of Gusarov--Vassiliev--Podkorytov. 
\end{abstract}

\section{Introduction}
The theory of Gusarov--Vassiliev finite-type invariants provides a powerful framework for studying knots and links. At the core, this theory provides a filtration on the set of invariant maps \(\mathrm{Emb}(S^1, S^3) \to V\), where \(V\) denotes an abelian group. The term `invariant' means that the map respects the isotopy relation on the space of knots.
It turns out that this idea can be generalized in many ways by changing the domain of an invariant to some version of the mapping space between the two space-like objects.
S.\,S. Podkorytov has initiated a project of developing related methods in various homotopical contexts, as presented in, e.g., \cite{Pod3, Pod4, Pod5}.

In \cite{Ker1}, Kervaire defined the semi-characteristic \(K(X)\) for every closed \((2k-1)\)-dimensional smooth manifold \(X\) by the formula
\[K(X):=\frac{\operatorname{dim}H^*(X;\Z/2)}{2}\bmod2.\]
We can gather all the manifolds in one space using the concept of a germ. Choose a sufficiently large \(q\) and consider the set \(E\) of germs of the embedded \((2n-1)\)-manifolds in \(\R^q\). Then, every submanifold \(X \subset \R^q\) can be represented by its characteristic function \(I_X \colon E \to \Z/2\). We say that an invariant of a smooth manifold \(X \mapsto \varphi(X) \in V\) is polynomial of degree \(k\), if there is a polynomial function \(\Phi \colon (\Z/2)^E \to V\) such that \(\Phi(I_X) = \varphi(X)\) for every closed smooth submanifold \(X \subset \R^q\).
In paper \cite{Pod1}, S.\,S. Podkorytov establishes that the Kervaire semi-characteristic of a closed \((2n-1)\)-manifold with \(v_n\)-structure (so-called Wu structure) is quadratic. In this paper, our aim is to generalize his result.

It is known that the Stiefel--Whitney numbers of closed \((n+1)\)-manifolds are not independent. For example, for an odd-dimensional closed \((n+1)\)-manifold \(Z\) one has \(w_{n+1}(Z) = 0\).
A less known relation is \(w_{n+1}(Z) + v_{(n+1)/2}(Z)^2 = 0\) for even-dimensional \((n+1)\)-manifolds \(Z\), where \(v_i(Z)\) denotes the Wu class. Let \(\mathcal{R}_{n+1}\) denote a subgroup of \(H^{n+1}(BO_{n+1})\) consisting of all the universal relations on Stiefel--Whitney numbers of closed \((n+1)\)-manifolds. This group is described in papers \cite{Dold, Br}.
Of course, the class corresponding to \(R\) vanishes on every closed \(n\)-dimensional manifold \(X\) as well for dimension reasons.
With an arbitrary relation \(R \in \mathcal{R}_{n+1}\) we can associate a fibration \(f \colon B \to \Gr_n(\R^q)\) such that a \((B, f)\)-structure on a given \(n\)-dimensional closed manifold \(X\) would correspond to a choice of an `annihilation' for the class \(R\) in \(H^*(X)\); see \S\ref{sec:generalized_semi-characteristic} for a precise definition. We call such \((B, f)\)-manifolds \textit{\(R\)-manifolds}.

Let \(R \in \mathcal{R}_{n+1}\) be some relation on the Stiefel--Whitney numbers. For a bounding manifold \(X\) equipped with some \(R\)-structure \(g_X\), we define its invariant \(\varkappa_R(X, g_X)\) taking values in \(\Z/2\), which we call the \textit{secondary Stiefel-Whitney number}; see Definition~\ref{def:generalized_semicharacteristic}. We prove that it is well-defined; see Lemma~\ref{ind} and Proposition~\ref{prop:lift_independence}. This invariant generalizes the Kervaire semi-characteristic \(K\), as can be seen from the following theorem (note that every \(n\)-manifold admits some \(v_{(n+1)/2}\)-structure).
\begin{proposition}[Proposition~\ref{th:our_is_kervaire}]
\label{thm:1}
Let \(n\) be odd and \(R = w_{n+1} + v^2_{(n+1)/2}\). Then the $v_{(n+1)/2}$-structure on bounding \(n\)-manifold \(X\) gives rise to an \(R\)-structure \(g_X\) such that \(\varkappa_R(X, g_X) = K(X)\).
\end{proposition}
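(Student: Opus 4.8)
Write $n=2k-1$, so that $n+1=2k$, $(n+1)/2=k$ and $R=w_{2k}+v_k^2$, the relation recalled in the introduction. \emph{First I would produce $g_X$ from the $v_k$-structure.} On $\Gr_n(\R^q)$ the tautological bundle has rank $n$, so $w_{n+1}$ restricts to $0$; moreover the nowhere-vanishing section of $\gamma_n\oplus\epsilon^1$ equips $w_{n+1}$ there with a canonical null-homotopy, so $R$ restricts canonically to $v_k^2=\Sq^k v_k$ on $\Gr_n(\R^q)$, and the fibration $f\colon B\to\Gr_n(\R^q)$ associated with $R$ is identified with the homotopy fibre of $\Sq^k v_k\colon\Gr_n(\R^q)\to K(\Z/2,2k)$. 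The stable operation $\Sq^k$ induces a map of fibrations over $\Gr_n(\R^q)$ from $\hofib\bigl(v_k\colon\Gr_n(\R^q)\to K(\Z/2,k)\bigr)$ to $B$, and I take $g_X$ to be the image of the given $v_k$-structure (a lift of the Gauss map of $X$ through $\hofib(v_k)$) under this map. Because $\Sq^k$ vanishes on $H^{k-1}(X;\Z/2)$, the group acting simply transitively on the set of $v_k$-structures, the resulting $R$-structure $g_X$ is independent of the choice of $v_k$-structure, in accordance with the conclusion, whose right side is intrinsic to $X$.

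\emph{Next I would unwind $\varkappa_R$ on a null-cobordism.} Choose a compact $W^{2k}$ with $\partial W=X$, every component meeting $X$. Unwinding Definition~\ref{def:generalized_semicharacteristic} on this $W$ — its independence of the choices being exactly Lemma~\ref{ind} and Proposition~\ref{prop:lift_independence} — we get $\varkappa_R(X,g_X)=\langle R(TW)_{g_X},[W,\partial W]\rangle$, where $[W,\partial W]$ is the mod-$2$ fundamental class and $R(TW)_{g_X}\in H^{2k}(W,\partial W;\Z/2)$ is $R(TW)$ rendered relative by the trivialization of $R(TW)|_{\partial W}$ supplied by $g_X$. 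On $\partial W$ one has $TW|_X\cong TX\oplus\epsilon^1$ with the $\epsilon^1$-summand spanned by the outward normal, and $w_{2k}(TW)|_X=w_{2k}(TX)=0$ for rank reasons while $v_k(TW)|_X=v_k(TX)=0$ in $H^k(X;\Z/2)$ since $k>\tfrac12\dim X$. By the very construction of $g_X$ — the canonical null-homotopy of the $w_{2k}$-summand coming from the outward normal, concatenated with $\Sq^k$ of the $v_k$-structure's null-homotopy of the $v_k$-summand — the relative class splits as $R(TW)_{g_X}=e+\sigma$ in $H^{2k}(W,\partial W;\Z/2)$, where $e$ is the relative top Stiefel--Whitney class of $TW$ with respect to the outward normal, and $\sigma=\Sq^k\tilde v=\tilde v\cup\tilde v$ for any $\tilde v\in H^k(W,\partial W;\Z/2)$ lifting $v_k(TW)\in H^k(W;\Z/2)$ (such $\tilde v$ exists because $v_k(TW)$ restricts to $0$ on $X$; and $\tilde v\cup\tilde v$ does not depend on the lift, as $\ker j^*$ squares to $0$ in $H^*(W,\partial W;\Z/2)$).

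\emph{Then I would evaluate the two contributions.} By the relative Poincar\'e--Hopf theorem, $\langle e,[W,\partial W]\rangle$ counts mod $2$ the zeros of a generic section of $TW$ that restricts to the outward normal on $\partial W$, so it equals $\chi(W)\bmod 2$ (inward versus outward is immaterial because $\chi(\partial W)=\chi(X^{2k-1})=0$). For $\sigma$, consider the symmetric $\Z/2$-bilinear form $\lambda_W(a,b)=\langle a\cup b,[W,\partial W]\rangle$ on $H^k(W,\partial W;\Z/2)$; its radical is $\ker j^*_k$, so $\operatorname{rank}\lambda_W$ is well-defined, and the Wu formula for a compact manifold with boundary gives $\lambda_W(\tilde v,y)=\langle v_k(TW)\cup y,[W,\partial W]\rangle=\langle\Sq^k y,[W,\partial W]\rangle=\lambda_W(y,y)$ for all $y$, i.e.\ $\tilde v$ represents the characteristic vector of $\lambda_W$ modulo its radical. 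For a non-degenerate symmetric $\Z/2$-form of rank $r$, its characteristic vector $c$ satisfies $\lambda(c,c)\equiv r\pmod 2$ ($c=0$ in the alternating case, with $r$ even, and $c=(1,\dots,1)$ in the case $\langle 1\rangle^{\oplus r}$). Hence $\langle\sigma,[W,\partial W]\rangle=\lambda_W(\tilde v,\tilde v)\equiv\operatorname{rank}_{\Z/2}\lambda_W\pmod 2$, and therefore $\varkappa_R(X,g_X)\equiv\chi(W)+\operatorname{rank}_{\Z/2}\lambda_W\pmod 2$.

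\emph{Finally I would match this with $K(X)$, which is the point where I expect the real work to lie.} It remains to prove the homological identity $K(X)\equiv\chi(W)+\operatorname{rank}_{\Z/2}\lambda_W\pmod 2$. By Poincar\'e duality $K(X)=\tfrac12\dim H^*(X;\Z/2)=\sum_{i=0}^{k-1}\dim H^i(X;\Z/2)$; the long exact sequence of $(W,\partial W)$ with $\Z/2$-coefficients together with Lefschetz duality $\dim H^i(W,\partial W;\Z/2)=\dim H^{2k-i}(W;\Z/2)$ yields $\dim H^i(X;\Z/2)=\dim H^i(W;\Z/2)+\dim H^{2k-1-i}(W;\Z/2)-t_i-t_{i+1}$ with $t_i=\operatorname{rank}(j^*_i)$; summing over $0\le i\le k-1$, the $t_i$ telescope modulo $2$ down to $t_0+t_k$, while $\sum_l\dim H^l(W;\Z/2)\equiv\chi(W)\pmod 2$ since $H^{2k}(W;\Z/2)=0$ (as $\partial W\neq\emptyset$), and $t_0=0$ (as $H^0(W,\partial W;\Z/2)=0$); the sum thus collapses to $\chi(W)+t_k\pmod 2$, and $t_k=\operatorname{rank}_{\Z/2}\lambda_W$ because $j^*_k$ is the adjoint of $\lambda_W$ under Lefschetz duality. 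Combining with the preceding paragraph gives $\varkappa_R(X,g_X)=K(X)$. The genuinely delicate points are, first, the bookkeeping in the previous paragraph of how $g_X$ relativizes $R(TW)$ so that the Euler term $\chi(W)$ splits cleanly off the Wu-square term, and, second, the linear-algebra plus long-exact-sequence argument just sketched — together they say that the relation $R=w_{n+1}+v_k^2$ is tuned precisely so that $\varkappa_R$ of a bounding manifold computes the Kervaire semi-characteristic of that manifold. I would additionally want to double-check the Wu formula in the bounded case and the identification $B\simeq\hofib(\Sq^k v_k)$ invoked in the first step.
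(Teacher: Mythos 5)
Your argument is correct in substance, but it takes a genuinely different route from the paper. The paper's proof is short and modular: it invokes Lemma~\ref{lem:choice_of_lift} to manufacture a lift \(p\colon\tilde B\to B\) for which the canonical relative lift satisfies \((p,\id)^*(r)=u^2+(\tilde f,\id)^*(\tilde w_{n+1})\), and then the identity \(\varkappa_R(X,p\circ\tilde g_X)=K(X)\) drops out of naturality together with the cited formula \(K(X)=\langle(\tilde g_X,\tau_Y)^*(u^2+(\tilde f,\id)^*(\tilde w_{n+1})),[Y]\rangle\) from \cite[Lemma~4.2]{Pod1}. You instead re-prove that cited formula from scratch: your evaluation \(\langle e,[W,\partial W]\rangle\equiv\chi(W)\) by relative Poincar\'e--Hopf, the identification of \(\tilde v\) as the characteristic vector of \(\lambda_W\) via the relative Wu formula so that \(\langle\tilde v^2,[W,\partial W]\rangle\equiv\operatorname{rank}\lambda_W=t_k\), and the long-exact-sequence/Lefschetz-duality computation \(K(X)\equiv\chi(W)+t_k\) together constitute essentially the Lusztig--Milnor--Peterson argument that is the content of Podkorytov's lemma. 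What the paper's route buys is brevity and a clean separation of the homotopy-theoretic bookkeeping (choice of \(p\)) from the geometric input (outsourced to \cite{Pod1}); what yours buys is a self-contained proof that exposes why the relation \(w_{n+1}+v_k^2\) is exactly tuned to the semi-characteristic. The one place where your write-up is less careful than the paper is the step you yourself flag: the assertion that for your \(\Sq^k\)-induced \(p\) the class \((g_X,\tau_W)^*(r)\) splits as \(e+\tilde v^2\). This does hold, but it needs the naturality check that \(\tilde\iota_{2k}\) pulls back to \(\Sq^k\tilde\iota_k=\tilde\iota_k^{\,2}\) under the map of path fibrations induced by \(\Sq^k\), plus the identification of your section-induced relative lift of \(w_{2k}\) with Kervaire's \(\tilde w_{2k}\) (immediate since \(H^{2k}(\Gr_n(\R^q)\to\Gr_{2k}(\R^{q+1}))=\Z/2\) and both classes restrict to \(w_{2k}\neq0\)); the paper sidesteps this entirely by \emph{defining} \(p\) through Lemma~\ref{lem:choice_of_lift} so as to realize the prescribed relative class. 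Your observation that the resulting \(R\)-structure is independent of the chosen \(v_k\)-structure (because \(\Sq^k\) kills \(H^{k-1}(X)\)) is a nice addition not made explicit in the paper.
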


Then, we associate a cobordism group \(\Omega^R_n\) with every \(R\); see Definition~\ref{def:R_cobordism_group}. It extends the unoriented cobordsim group \(\Omega^O_n\), and \(\varkappa_R\) serves as a complete invariant that determines an \(R\)-cobordism class.
\begin{theorem}[Corollary~\ref{cor:cobordism_group_descr}]
\label{thm:2}
The forgetful homomorphism \(U \colon \Omega^R_n \to \Omega^O_n\) is surjective, and the secondary Stiefel-Whitney number yield an isomorphism \(\varkappa_R \colon \ker(U) \to \Z/2\). Thus, there is a short exact sequence
\[0 \to \Z/2 \to \Omega^R_n \to \Omega^O_n \to 0.\]
\end{theorem}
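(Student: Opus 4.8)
The plan is to check separately the three assertions bundled in the short exact sequence --- surjectivity of $U$, that $\varkappa_R$ restricts to a homomorphism $\ker(U) \to \Z/2$, and that this homomorphism is bijective --- after two preliminary observations. First, the operation on $\Omega^R_n$ is disjoint union (Definition~\ref{def:R_cobordism_group}), and for any $R$-manifold $(X, g_X)$ the cylinder $X \times I$ carrying the $R$-structure pulled back along $X \times I \to X$ is a null-$R$-cobordism of $(X, g_X) \sqcup (X, g_X)$; hence $\Omega^R_n$ is an elementary abelian $2$-group, so that $(X_0, g_0) \sqcup (X_1, g_1)$ is null-$R$-cobordant if and only if $[X_0, g_0] = [X_1, g_1]$ in $\Omega^R_n$. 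Second, since $U[X, g_X] = [X]$, the subgroup $\ker(U)$ is precisely the set of $R$-cobordism classes of null-cobordant $R$-manifolds, which is the domain of $\varkappa_R$ (Definition~\ref{def:generalized_semicharacteristic}).

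For surjectivity of $U$, I would use that (by the construction recalled in \S\ref{sec:generalized_semi-characteristic}) an $R$-structure on a closed $n$-manifold $X$ is a nullhomotopy of the composite of the classifying map of $X$ with the map representing $R$; the obstruction to such a nullhomotopy is the resulting class in $H^{n+1}(X; \Z/2)$, which vanishes for dimension reasons. So every $X$ carries an $R$-structure $g_X$, and $U[X, g_X] = [X]$ proves surjectivity. That $\varkappa_R$ descends to a homomorphism on $\ker(U)$ is then routine: additivity under disjoint union and $\varkappa_R(\emptyset) = 0$ follow from Definition~\ref{def:generalized_semicharacteristic}, while well-definedness on $R$-cobordism classes is Lemma~\ref{ind} together with Proposition~\ref{prop:lift_independence}.

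The heart of the proof is injectivity. Given $(X_0, g_0)$ and $(X_1, g_1)$ with $X_i = \partial W_i$ and $\varkappa_R(X_0, g_0) = \varkappa_R(X_1, g_1)$, it suffices by the $2$-torsion observation above to produce a null-$R$-cobordism of $(X_0, g_0) \sqcup (X_1, g_1)$. The plan is to identify the obstruction to extending the $R$-structure $g_i$ from $\partial W_i$ over $W_i$ --- a class in $H^{n+1}(W_i, \partial W_i; \Z/2) \cong \Z/2$ --- with $\varkappa_R(X_i, g_i)$, this being essentially what Definition~\ref{def:generalized_semicharacteristic} records, with independence of the chosen $W_i$ supplied by Lemma~\ref{ind}. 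The obstruction for the disjoint union $W_0 \sqcup W_1$ is then $\varkappa_R(X_0, g_0) + \varkappa_R(X_1, g_1) = 0$, so $g_0 \sqcup g_1$ extends over $W_0 \sqcup W_1$, giving the desired null-$R$-cobordism. For surjectivity of $\varkappa_R$ onto $\Z/2$, I would take $X = S^n = \partial D^{n+1}$ with any $R$-structure: the set of $R$-structures on $X$ is then a torsor over $[X, K(\Z/2, n)] = H^n(X; \Z/2)$ (two nullhomotopies differ by a map into the loop space), and a short obstruction-theoretic computation of the same flavour as Lemma~\ref{ind} and Proposition~\ref{prop:lift_independence} shows that altering the structure by $\alpha \in H^n(X; \Z/2)$ changes $\varkappa_R$ by $\langle \alpha, [X]\rangle$; taking $\alpha$ to be the nonzero class of $H^n(S^n; \Z/2)$ realizes the value $1$. (In the Kervaire situation of Proposition~\ref{th:our_is_kervaire} one may instead just note $\varkappa_R(S^n) = K(S^n) = 1$.)

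Assembling the pieces, $\varkappa_R \colon \ker(U) \to \Z/2$ is an isomorphism and $U$ is surjective, so the tautological exact sequence $0 \to \ker(U) \to \Omega^R_n \xrightarrow{U} \Omega^O_n \to 0$ is the asserted one. I expect the genuine difficulty to be concentrated in the injectivity step --- pinning down the equality between the relative bordism obstruction of $(W_i, \partial W_i)$ and the secondary Stiefel--Whitney number, and controlling the indeterminacy that the obstruction theory for $f$ introduces through $H^n(W_i, \partial W_i; \Z/2)$ --- which is precisely what Lemma~\ref{ind} and the constructions of \S\ref{sec:generalized_semi-characteristic} are designed to provide; with that in hand, the remainder is bookkeeping.
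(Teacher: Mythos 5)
Your overall architecture matches the paper's: surjectivity of $U$ from the vanishing of the obstruction in $H^{n+1}(X)$ (Remark~\ref{exi}), additivity of $\varkappa_R$, the identification of $\ker(U)$ with bounding $R$-manifolds, and an obstruction-theoretic comparison of $\varkappa_R$ with the obstruction to extending the structure over a null-cobordism (the paper's Lemma~\ref{lem:kernel_characterization} and Proposition~\ref{not0}). However, there are two genuine gaps.

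First, you never establish that $\varkappa_R$ is constant on $R$-cobordism classes, i.e.\ that a null-$R$-cobordant $(X,g_X)$ has $\varkappa_R(X,g_X)=0$. You attribute well-definedness to Lemma~\ref{ind} and Proposition~\ref{prop:lift_independence}, but those only give independence of the choice of null-cobordism $Y$ for a \emph{fixed} $R$-manifold and uniqueness of the lift $r$; they say nothing about varying $(X,g_X)$ within its class. Without this, $\varkappa_R\colon\ker(U)\to\Z/2$ is not yet a function, and injectivity alone does not yield an isomorphism. The paper supplies the missing direction in the first half of Lemma~\ref{lem:kernel_characterization}: if a $(B_1,f_1)$-null-cobordism $(Y,g_Y)$ exists, then $(g_X,\tau_Y)^*(r)=(j\circ g_X,g_Y)^*\circ(\id,f_1)^*(s)$ factors through $H^{n+1}(B_1\xrightarrow{\id}B_1)=0$, using the definition $r=(j,\id)^*(s)$ from \S\ref{sec:cobordism_group}.

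Second, in the injectivity step the passage from vanishing of the \emph{sum} $\varkappa_R(X_0,g_0)+\varkappa_R(X_1,g_1)$ to extendability over $W_0\sqcup W_1$ is wrong. Over a disconnected null-cobordism the obstruction group is $H^{n+1}(W_0,\partial W_0)\oplus H^{n+1}(W_1,\partial W_1)\cong\Z/2\oplus\Z/2$, and the obstruction is the \emph{pair} $(\varkappa_R(X_0,g_0),\varkappa_R(X_1,g_1))$; pairing with the fundamental class only detects the sum. If both values equal $1$, the structure does not extend over $W_0\sqcup W_1$ even though the sum vanishes. The fix is exactly the paper's: choose a \emph{connected} null-cobordism $Y$ of $X_0\sqcup X_1$ (for $n\ge 1$; $n=0$ is handled separately in Example~\ref{ex}), so that $H_{n+1}(X_0\sqcup X_1\hookrightarrow Y)\cong\Z/2$ is generated by $[Y]$, the obstruction class is determined by its pairing with $[Y]$, and that pairing equals $\varkappa_R(X_0\sqcup X_1, g_0\sqcup g_1)=0$ by Lemma~\ref{ind} and additivity. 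Your surjectivity argument via altering the structure on $S^n$ by $\alpha\in H^n(S^n)$ is sound in outline (the change in the obstruction class is $\delta\alpha$, so $\varkappa_R$ changes by $\langle\alpha,[X]\rangle$) and is a reasonable alternative to the paper's Proposition~\ref{not0}, though the claimed shift formula would itself need a proof of roughly the same length as Lemma~\ref{lem:choice_of_lift}.
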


Lastly, we show that the \(R\)-cobordism class depends on an \(R\)-manifold quadratically in the above sense.
The claim that the correspondence \((X, g_X) \mapsto [X, g_X]^R\) is quadratic can be formalized in the following way.
Let \(q \ge 2n+2\). Denote the set of germs of embedded \(R\)-manifolds in \(\R^q\) by \(E_R\); see \S\ref{sec:quadr} for the precise definition. Then the abelian group \((\Z/2)^{E_R}\) of \(\Z/2\)-valued functions on \(E_R\) naturally contains the set of embedded \(R\)-manifolds via characteristic functions:
\[\{R\text{-submanifolds of}\;\R^q\} \hookrightarrow (\Z/2)^{E_R}, \quad (X, g_X) \mapsto I_{(X, g_X)}.\]
\begin{theorem}[Theorem~\ref{main}]
\label{thm:3}
There is a quadratic function \(\mathfrak{Q} \colon (\Z/2)^{E_R} \to \Omega^R_n\) such that for any closed \(R\)-manifold \((X, g_X)\) the equality \(\mathfrak{Q}(I_{(X, g_X)}) = [X, g_X]^R\) holds.
\end{theorem}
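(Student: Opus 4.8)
The plan is to exhibit a quadratic function by the standard Gusarov–Vassiliev–Podkorytov device: a function $\Phi\colon(\Z/2)^{E_R}\to V$ is quadratic if and only if for some finite ``configuration'' description it is a sum of a constant, a linear term, and a bilinear term, and in practice one produces such $\Phi$ by writing down its value on a characteristic function $I_{(X,g_X)}$ as a sum over pairs of ``pieces'' of $X$. Concretely, I would fix $q\ge 2n+2$ (so that embedded $R$-manifolds are generic and all relevant intersections are transverse), choose a fine generic triangulation/handle decomposition of $\R^q$, and for a given closed $R$-submanifold $(X,g_X)$ decompose its characteristic function additively: $I_{(X,g_X)}=\sum_i I_{X_i}$ where the $X_i$ range over ``elementary'' local pieces. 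The correct statement of ``quadratic'' here is that $\mathfrak{Q}(I_X+I_Y+I_Z)-\mathfrak{Q}(I_X+I_Y)-\mathfrak{Q}(I_X+I_Z)-\mathfrak{Q}(I_Y+I_Z)+\mathfrak{Q}(I_X)+\mathfrak{Q}(I_Y)+\mathfrak{Q}(I_Z)-\mathfrak{Q}(0)=0$ for any three disjoint $R$-submanifolds, i.e. the third finite difference vanishes; so the heart of the proof is to define $\mathfrak{Q}$ on \emph{all} of $(\Z/2)^{E_R}$ (not just on characteristic functions of genuine submanifolds) in such a way that this identity holds on the nose and that $\mathfrak{Q}(I_{(X,g_X)})=[X,g_X]^R$.

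The key steps, in order. (1) Use Theorem~\ref{thm:2}: every class in $\Omega^R_n$ is determined by its image $[X]\in\Omega^O_n$ together with the single bit $\varkappa_R$ when that image vanishes; more usefully, $\Omega^R_n$ sits in a split-up-to-extension sequence $0\to\Z/2\to\Omega^R_n\to\Omega^O_n\to 0$. So it suffices to produce (a) a quadratic lift of the ordinary unoriented cobordism class $I_X\mapsto[X]^O\in\Omega^O_n$, and (b) a correction term valued in $\Z/2$ that repairs the failure of additivity and simultaneously encodes $\varkappa_R$ of the $R$-structure; the sum, interpreted in $\Omega^R_n$ via the extension, is $\mathfrak{Q}$. (2) For (a): the unoriented cobordism class is already known to be \emph{linear} (indeed additive) on disjoint unions — $[X\sqcup Y]^O=[X]^O+[Y]^O$ — so on characteristic functions that are honest disjoint unions it is linear, and one extends it to a linear, hence a fortiori quadratic, function $(\Z/2)^{E_R}\to\Omega^O_n$ by the germ-wise formula $\sum_e \epsilon(e)\cdot(\text{class of the germ at }e)$, which is literally a linear map out of the free module. (3) For (b): the secondary number $\varkappa_R$ is, by the results quoted before Theorem~\ref{thm:3} together with Podkorytov's computation of the Kervaire case (Proposition~\ref{thm:1} and \cite{Pod1}), built from a cup-square / intersection pairing in $H^*(X;\Z/2)$; such an expression is genuinely quadratic and \emph{not} linear — its failure of additivity under $X\sqcup Y$ is a bilinear linking-type term between $X$ and $Y$. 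I would make this precise by writing $\varkappa_R$ as (linear part) $+$ (value of a symmetric bilinear form $\beta$ on the ``homology germs''), where $\beta(e,e')$ counts, with $\Z/2$ coefficients, a local intersection number of the two germs weighted by the $R$-structure data; the bilinear form extends tautologically to all of $(\Z/2)^{E_R}\times(\Z/2)^{E_R}$. (4) Assemble: $\mathfrak{Q}(\varphi) := \iota\bigl(L_{\mathrm{lin}}(\varphi)\bigr) + \widetilde{B}(\varphi,\varphi)$ where $L_{\mathrm{lin}}$ is the linear unoriented-cobordism piece lifted into $\Omega^R_n$ along the extension, and $\widetilde B$ is the bilinear correction; verify the cocycle/consistency conditions so that this is well-defined in $\Omega^R_n$ and check on genuine $I_{(X,g_X)}$ that it reproduces $[X,g_X]^R$ by comparing with Definition~\ref{def:R_cobordism_group} and Definition~\ref{def:generalized_semicharacteristic}.

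The main obstacle I expect is step (3)–(4): isolating the bilinear part of $\varkappa_R$ in a form that (i) is defined germ-by-germ on $E_R$, (ii) is manifestly symmetric and bilinear over $\Z/2$, and (iii) when summed over the pieces of an actual submanifold $X$ reconstructs the mod-2 cup-square/dimension-count defining $\varkappa_R(X,g_X)$ — the subtlety being that $\dim H^*(X;\Z/2)$ is \emph{not} additive under connected sum or under the naive decomposition into handles, so the ``diagonal'' contributions and the cross terms must be bookkept very carefully (this is exactly the phenomenon Podkorytov handles in the $v_n$-structure/Kervaire case in \cite{Pod1}, and I would follow that template, using Poincaré–Lefschetz duality on the bounding $(n{+}1)$-manifold to turn the secondary number into an intersection count that is visibly quadratic). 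A secondary technical point is making sure the passage through the nonsplit (a priori) extension in Theorem~\ref{thm:2} does not break bilinearity: since the extension is by $\Z/2$ and a bilinear $\Z/2$-valued term lifted against a linear $\Omega^O_n$-valued term still has vanishing third difference regardless of the extension class, this should cause no real trouble, but it needs to be said explicitly. Once the bilinear form $\beta$ is pinned down, verifying $\mathfrak{Q}(I_{(X,g_X)})=[X,g_X]^R$ and the vanishing of the third finite difference are both routine.
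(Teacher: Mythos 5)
Your high-level skeleton matches the paper's: both split the problem along the extension $0\to\Z/2\to\Omega^R_n\to\Omega^O_n\to0$ into a linear unoriented-cobordism part and a $\Z/2$-valued quadratic part, and both recognize that the (possible) non-splitness of the extension must be handled. But the steps that actually carry the proof are missing or rest on undefined notions. First, your linear part ``$\sum_e\epsilon(e)\cdot(\text{class of the germ at }e)$'' is not well-defined: a germ is local data at a point and has no cobordism class, and for a general element of $(\Z/2)^{E_R}$ the sum runs over uncountably many germs. The paper avoids this by a compactness argument (Lemma~\ref{lem:finite_manifolds}) reducing to a finite collection $\{(X_i,g_{X_i})\}$, projecting linearly onto the finite-dimensional span $U$ of their characteristic functions, and computing the Stiefel--Whitney numbers (which determine $[X]^O$ by Thom's theorem) through the Pontryagin--Thom collapse and the bilinear map $\mathfrak{A}\colon U^0\times C^*(\Th f^*\bar\gamma^q_n)\to C^*(\Sigma)$ of Lemma~\ref{lm:powerful}; that bilinear map is the device making cochain-level data depend linearly on the characteristic function, and nothing in your proposal plays its role.

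Second, and more seriously, your proposed source of quadraticity --- a ``local intersection number of two germs weighted by the $R$-structure data'' --- is not substantiated, and it is not where the quadraticity actually comes from. In the paper, $\varkappa_R$ is rewritten via the relative Thom isomorphism, and the restriction of $\Phi(r)$ to $\Th\bar\gamma^{q+1}_{n+1}$ is expressed as $\sum_j\Sq^{q-j+1}(a_j)$ using the Brown--Peterson/Dold description of $\mathcal{R}_{n+1}$ (Remark~\ref{dold}, Lemma~\ref{57}, Corollary~\ref{59}); quadraticity then follows because $\Sq^d(x)=x\smile_{a-d}x$ is a quadratic function of the cochain $x$, and the relevant cochains depend linearly on the characteristic function via $\mathfrak{A}$. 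Without an argument of this kind your bilinear form $\beta$ is only postulated. Two further gaps: $\varkappa_R$ is defined only for \emph{bounding} $R$-manifolds, so each $X_i$ must first be corrected to a bounding $X^0_i$ by adjoining basis manifolds $W_t$ (your proposal never does this); and the section $\Omega^O_n\to\Omega^R_n$ cannot be taken linear, since the extension genuinely fails to split (e.g.\ $\Omega^{w_1}_0=\Z/4$) --- the paper builds a \emph{quadratic} section from the $\Z/4$-module structure and the quadratic map $\mathrm{sq}\colon\Z/2\to\Z/4$, a step your ``should cause no real trouble'' remark glosses over.
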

This implies that \(\varkappa_R \colon (X, g_X) \mapsto \varkappa_R(X, g_X) \in \Z/2\) is quadratic in the same sense.

To prove Theorem~\ref{thm:3}, we develop some geometric machinery.  First, we use a version of Pontryagin--Thom collapse map for \(R\)-manifolds; see \S\ref{sec:pontryagin-thom_collapse}. Second, we prove a relative version of Thom isomorphism and show how it interacts with characteristic classes; see \S\ref{sec:relative_thom}. In the end, all that leads to a geometric expression of \(\varkappa_R\) via Steenrod operations applied to certain carefully chosen cochains; see \S\ref{sec:geometric_formula_for_semichar}. We use the fact that Steenrod operations depend on cochains quadratically, to show that \(\varkappa_R\) depends on \(R\)-manifolds quadratically as well.

\subsection*{Organization}
In Section~\ref{sec:preliminaries}, we declare our notational conventions and recall some basic properties for (co)homology, vector bundles, characteristic classes, and \((B, f)\)-manifolds. In Section~\ref{sec:generalized_semi-characteristic}, we define \(R\)-manifolds and construct an invariant of bounding \(R\)-manifolds, which we call the secondary Stiefel-Whitney number \(\varkappa_R(-)\). We prove its basic properties and show that it equals the Kervaire semi-characteristic. In Section~\ref{sec:cobordism_group}, we define the cobordism group \(\Omega^R_n\) and show that it extends \(\Omega^O_n\) with the kernel isomorphic to \(\Z/2\). We also provide some examples related to \(\mathrm{Pin}^-\)-manifolds in low dimensions. In Section~\ref{sec:geometric_constructions}, we develop all of the necessary geometry to demonstrate the quadratic property.
In Section~\ref{sec:quadr}, we recall the setting of smooth germs of structured manifolds and prove the quadratic property.

\subsection*{Acknowledgements}
I express sincere gratitude to my scientific advisor, Dr. S.S. Podkorytov, for proposing the problem and for invaluable support and guidance throughout all the research. Special thanks go to my friend V. Ionin for his diligent review of the text, which significantly enhanced the clarity of this paper.

This work was supported by the Ministry of Science and Higher Education of the Russian Federation (agreement 075-15-2025-344 dated 29/04/2025 for Saint Petersburg Leonhard Euler International Mathematical Institute at PDMI RAS).
\maketitle
\section{Preliminaries}
\label{sec:preliminaries}
\subsection{(Co)homologies}
By default, we denote by \(H_*(-)\) and \(H^*(-)\) the functors of singular homology and cohomology with coefficients \(\Z/2\), respectively. For a continuous map \(f \colon X \to Y\) we denote by \(H_*(X \xrightarrow{f} Y)\) and by \(H^*(X \xrightarrow{f} Y)\) the reduced homology and cohomology of the homotopy cofiber \(\mathrm{hocofib}(f)\) with coefficients \(\Z/2\), respectively.
For a commutative square
\[\begin{tikzcd}[cramped]
X & Y \\
{X'} & {Y'}
\arrow["f", from=1-1, to=1-2]
\arrow["a"', from=1-1, to=2-1]
\arrow["b", from=1-2, to=2-2]
\arrow["{f'}"', from=2-1, to=2-2]
\end{tikzcd}\]
there are induced natural homomorphisms\footnote{If \(A \hookrightarrow X\) is a cofibration, then \(H_*(A \hookrightarrow X) \cong H_*(X, A)\) and \(H^*(A \hookrightarrow X) \simeq H^*(X, A)\).}
\[(a, b)_* \colon H_*(X \xrightarrow{f} Y) \to H_*(X' \xrightarrow{f'} Y')\]
and
\[(a, b)^* \colon H^*(X' \xrightarrow{f'} Y') \to H^*(X \xrightarrow{f} Y).\]
Also, there is a corresponding long exact sequence
\[\dots \to H^{*-1}(X) \xrightarrow{\delta} H^*(X \xrightarrow{f} Y) \to H^*(Y) \to H^*(X) \to \dots\]
associated with a map \(f\).

If \(X\) is a closed \(n\)-manifold, then \([X] \in H_{n}(X)\) denotes its fundamental class. For a compact \((n+1)\)-manifold \(Y\) with boundary there is a relative fundamental class \([Y] \in H_{n+1}(\partial Y \to Y)\).

There is a canonical pairing
\[\langle -, - \rangle \colon H^k(X) \otimes H_k(X) \to \Z/2\]
and a relative version of this pairing
\[\langle -, - \rangle \colon H^k(X \xrightarrow{f} Y) \otimes H_k(X \xrightarrow{f} Y) \to \Z/2.\]

\subsection{Vector bundles and characteristic classes}
Let \(\xi\) be a vector bundle over a topological space \(X\).
Its Stiefel--Whitney classes are denoted by \(w_i(\xi) \in H^i(X)\). Its Wu classes are denoted by \(v_i(\xi) \in H^i(X)\). The Wu classes are defined by the identity \(\Sq(v(\xi)) = w(\xi)\), where
\[\Sq = \sum_{i = 0}^\infty \Sq^i, \quad v(\xi) = \sum_{i=0}^\infty v_i(\xi), \quad w(\xi) = \sum_{i = 0}^\infty w_i(\xi).\]

\subsection{\((B, f)\)-manifolds}
Throughout this article, the term `(sub)manifold' refers to a smooth (sub)manifold. For the non-negative integer \(n\) fix some integer \(q\) such that \(q \ge 2n+2\). Denote by \(\Gr_n(\R^q)\) the \textit{Grassmannian manifold} of \(n\)-dimensional subspaces in \(\R^q\) and by \(\gamma_n^q\) the \textit{tautological bundle} over \(\Gr_n(\R^q)\). Let \(X\) be a compact \(n\)-dimensional submanifold of \(\R^q\). Then its tangent bundle is a subbundle of \(T\R^q\), and is classified by the \textit{Gauss map} \(\tau_X \colon X \to \Gr_n(\R^q)\). 
 
 We say that a compact submanifold \(Y\) of
\[\R^{q+1}_- \defeq \{(x_1, \ldots, x_{q+1}) \in \R^{q+1} | x_{q+1} \le 0\}\]
is a \textit{null-cobordism} for a closed\footnote{By a closed submanifold, we mean a submanifold that is compact and without boundary.} submanifold \(X\subseteq \R^q\) (or just \(X\) bounds \(Y\)), if \(\partial Y = X\) and \(Y\) meets the boundary \(\partial\R^{q+1}_- =\R^q\) orthogonally.
There is a corresponding Gaussian map \(\tau_Y \colon Y \to \Gr_{n+1}(\R^{q+1})\) and the following diagram commutes:
\[\begin{tikzcd}[cramped]
	{X=\partial Y} & Y \\
	{\Gr_n(\R^q)} & {\Gr_{n+1}(\R^{q+1}).}
	\arrow[hook, from=1-1, to=1-2]
	\arrow["{\tau_X}"', from=1-1, to=2-1]
	\arrow["{\tau_Y}", from=1-2, to=2-2]
	\arrow[hook, "{i}", from=2-1, to=2-2]
\end{tikzcd}\]

We say that two closed \(n\)-dimensional submanifolds \(X\) and \(X'\) of \(\R^q\) are \textit{cobordant}, if there is a compact \((n+1)\)-dimensional submanifold \(Y\subseteq\R^q\times[0,1]\) such that \(\partial Y = X\times 0 \sqcup X'\times 1\) and \(Y\) meets the boundary \(\partial(\R^{q}\times [0,1]) =(\R^q\times 0) \sqcup (\R^q\times 1)\) orthogonally; such a submanifold \(Y\) is called a \textit{cobordism between \(X\) and \(X'\)}.

Let \(B\) be a topological space, and \(f \colon B \to \Gr_n(\R^q)\) be a fibration.
If \(X\) is a compact submanifold of \(\R^q\), then a \textit{\((B, f)\)-structure} on it is a continuous map \(g_X\colon X \to B\) such that \(f \circ g_X = \tau_X\). An \(n\)-dimensional  \textit{\((B, f)\)-manifold} is a submanifold of \(\R^q\) equipped with some \((B, f)\)-structure.

If there is a commutative square
\[\begin{tikzcd}[cramped]
	B & {B_1} \\
	{\Gr_n(\R^q)} & {\Gr_{n+1}(\R^{q+1}),}
	\arrow["j", from=1-1, to=1-2]
	\arrow["f"', from=1-1, to=2-1]
	\arrow["{f_1}", from=1-2, to=2-2]
	\arrow[hook, "{i}", from=2-1, to=2-2]
\end{tikzcd}\]
then we say that two closed \((B, f)\)-manifolds \((X, g_X)\) and \((X', g_{X'})\) are \textit{\((B_1, f_1)\)-cobordant}, if there is a cobordism \(Y\) between \(X\) and \(X'\)  with \((B_1, f_1)\)-structure \(g_Y\) such that $$g_Y|_{X \sqcup X'} = j \circ (g_X \sqcup g_{X'}).$$ For a comprehensive overview, see \cite{Sto1}.
\section{Secondary Stiefel--Whitney number of a bounding $R$-manifold}
\label{sec:generalized_semi-characteristic}
Define \textit{the subgroup of relations between Stiefel--Whitney numbers of closed manifolds} $\mathcal{R}_{n+1}$ to be the subgroup of \(H^{n+1}(\Gr_{n+1}(\R^{q+1}))\) given by
$$\mathcal{R}_{n+1} = \bigcap_{Z}\mathrm{ker}{(\tau_Z^* \colon H^{n+1}(\Gr_{n+1}(\R^{q+1})) \to H^{n+1}(Z))},$$
where the intersection is taken over all closed $(n+1)$-dimensional submanifolds \(Z\subset\R^{q+1}\).
  
For an arbitrary relation $R\in \mathcal{R}_{n+1}$ consider its inverse image $i^*(R)\in H^{n+1}(\Gr_n(\R^q))$ and its classifying map $c\colon \Gr_n(\R^q)\to K(\Z/2,n+1)$. Let $B$ be the pullback of the path space fibration over $K(\Z/2,n+1)$ along the map $c$. Then there is a homotopy fibration sequence
\[K(\Z/2, n) \to B \xrightarrow{f} \Gr_n(\R^q) \xrightarrow{c} K(\Z/2, n+1).\]
The main objects of our interest will be closed $n$-dimensional $(B,f)$-manifolds with $B$ and $f$ as above. For brevity, we will call such \((B, f)\)-manifolds \textit{\(R\)-manifolds}.

\begin{remark}\label{exi}
Every closed $n$-dimensional submanifold $X\subseteq\R^q$ possesses a structure of an \(R\)-manifold. In fact, the only obstruction to the existence of a map $g_X\colon X\to B$ that makes the following diagram commutative 
\[\begin{tikzcd}
	& B & {K(\Z/2,n)} \\
	X & {\Gr_n(\R^q)}
	\arrow["f", from=1-2, to=2-2]
	\arrow[from=1-3, to=1-2]
	\arrow["{g_X}", dashed, from=2-1, to=1-2]
	\arrow["{\tau_X}"', from=2-1, to=2-2]
\end{tikzcd}\]
belongs to $H^{n+1}(X;\pi_n(K(\Z/2,n))) = H^{n+1}(X)$, which is trivial since $X$ has dimension $n$.
\end{remark}

Consider the composition
$$B \xrightarrow{f} \Gr_n(\R^q) \xrightarrow{i} \Gr_{n+1}(\R^{q+1}).$$
Since $(i\circ f)^*(R)=0$, there is a class $r\in H^{n+1}(B\xrightarrow{i\circ f}\Gr_{n+1}(\R^{q+1}))$ such that $r|_{\Gr_{n+1}(\R^{q+1})}=R$. We call such a class \(r\) \textit{a relative lift of the relation $R$}.

Let $(X,g_X)$ be an $R$-manifold with $X$ bounding (for brevity, we call \((X,g_X)\) a bounding \(R\)-manifold). Choose a null-cobordism $Y$ for $X$. Then we have the commutative diagram
\[\begin{tikzcd}
	B \\
	& X & Y \\
	{\Gr_n(\R^q)} &&& {\Gr_{n+1}(\R^{q+1}).}
	\arrow["f"', from=1-1, to=3-1]
	\arrow["{g_X}"', from=2-2, to=1-1]
	\arrow[hook, from=2-2, to=2-3]
	\arrow["{\tau_X}", from=2-2, to=3-1]
	\arrow["{\tau_Y}"', from=2-3, to=3-4]
	\arrow["i"', from=3-1, to=3-4]
\end{tikzcd}\]
Consider the morphism of pairs $(g_X,\tau_Y) \colon (X\hookrightarrow Y)\to (B\xrightarrow{i\circ f}\Gr_{n+1}(\R^{q+1}))$.
\begin{definition}\label{def:generalized_semicharacteristic}
For a bounding $R$-manifold $(X,g_X)$, the \textit{secondary Stiefel--Whitney number} is given by the formula
$$\varkappa_r^Y(X,g_X):=\langle(g_X,\tau_Y)^*(r),[Y]\rangle\in\Z/2.$$
\end{definition}
The following lemma shows that the secondary Stiefel--Whitney number does not depend on the choice of a null-cobordism.
\begin{lemma}\label{ind}
Let $(X,g_X)$ be a closed $R$-manifold and $Y_1,Y_2$ be null-cobordisms for $X$. Then we have $\varkappa_r^{Y_1}(X,g_X)=\varkappa_r^{Y_2}(X,g_X)$.
\end{lemma}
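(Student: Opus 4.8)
The plan is to glue the two null-cobordisms along \(X\) into one closed \((n+1)\)-manifold and then to use the defining property of \(\mathcal{R}_{n+1}\). Put \(Z \defeq Y_1 \cup_X Y_2\), the smooth closed \((n+1)\)-manifold obtained by gluing \(Y_1\) and \(Y_2\) along their common boundary \(X\) (using collars). The role of the orthogonality condition from the definition of a null-cobordism is precisely that it makes the two Gauss maps agree on the boundary, \(\tau_{Y_1}|_X = i \circ \tau_X = \tau_{Y_2}|_X\) (both send \(x \in X\) to \(T_xX \oplus \R e_{q+1}\)); hence \(\tau_{Y_1}\) and \(\tau_{Y_2}\) glue to a continuous map \(T \colon Z \to \Gr_{n+1}(\R^{q+1})\) with \(T|_{Y_i} = \tau_{Y_i}\). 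Moreover \(T\) classifies \(TZ\): the tautological identifications \(\tau_{Y_i}^*\gamma_{n+1}^{q+1} = TY_i = TZ|_{Y_i}\) agree over \(X\), hence glue to an isomorphism \(T^*\gamma_{n+1}^{q+1} \cong TZ\). Since \(q \ge 2n+2\), any two maps from the \((n+1)\)-complex \(Z\) into \(\Gr_{n+1}(\R^{q+1})\) that classify the same bundle are homotopic; in particular \(T\) is homotopic to the Gauss map of some embedding \(Z \hookrightarrow \R^{q+1}\), and therefore the hypothesis \(R \in \mathcal{R}_{n+1}\) gives \(T^*(R) = 0 \in H^{n+1}(Z)\).

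Now I would consider the morphism of pairs \((g_X, T) \colon (X \hookrightarrow Z) \to (B \xrightarrow{i \circ f} \Gr_{n+1}(\R^{q+1}))\) — well defined since \(T|_X = i \circ \tau_X = (i \circ f) \circ g_X\) — and the class \((g_X, T)^*(r) \in H^{n+1}(X \hookrightarrow Z)\). By naturality of the long exact sequence together with \(r|_{\Gr_{n+1}(\R^{q+1})} = R\) and \(T^*(R) = 0\), the image of \((g_X, T)^*(r)\) in \(H^{n+1}(Z)\) vanishes, so \((g_X, T)^*(r) = \delta(\alpha)\) for some \(\alpha \in H^n(X)\). Let \(\bar{[Z]} \in H_{n+1}(X \hookrightarrow Z)\) be the image of the fundamental class \([Z] \in H_{n+1}(Z)\); then \(\partial \bar{[Z]} = 0\) (it comes from an absolute class), so adjointness of \(\delta\) and \(\partial\) gives
\[\langle (g_X, T)^*(r), \bar{[Z]} \rangle = \langle \delta(\alpha), \bar{[Z]} \rangle = \langle \alpha, \partial \bar{[Z]} \rangle = 0.\]

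On the other hand, since \(X\) is bicollared in \(Z\), excision yields \(H_{n+1}(X \hookrightarrow Z) \cong H_{n+1}(X \hookrightarrow Y_1) \oplus H_{n+1}(X \hookrightarrow Y_2)\), the isomorphism being induced by the inclusions of the sub-pairs, and under it \(\bar{[Z]}\) corresponds to \(([Y_1], [Y_2])\) by additivity of fundamental classes; also \((g_X, T)\) restricts on each sub-pair \((X \hookrightarrow Y_i)\) to \((g_X, \tau_{Y_i})\). Therefore, by naturality of the Kronecker pairing,
\[\langle (g_X, T)^*(r), \bar{[Z]} \rangle = \langle (g_X, \tau_{Y_1})^*(r), [Y_1] \rangle + \langle (g_X, \tau_{Y_2})^*(r), [Y_2] \rangle = \varkappa_r^{Y_1}(X, g_X) + \varkappa_r^{Y_2}(X, g_X).\]
Comparing the two displays gives \(\varkappa_r^{Y_1}(X, g_X) + \varkappa_r^{Y_2}(X, g_X) = 0\) in \(\Z/2\), which is the claim.

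The main obstacle is the step \(T^*(R) = 0\). It relies on two things: the orthogonality requirement for null-cobordisms, which is exactly what lets \(\tau_{Y_1}\) and \(\tau_{Y_2}\) be glued into a single classifying map \(T\) for \(TZ\); and the assumption \(q \ge 2n+2\), which is what identifies \(\mathcal{R}_{n+1}\), defined through embedded \((n+1)\)-submanifolds of \(\R^{q+1}\), with the subgroup of universal Stiefel--Whitney relations annihilated by the tangent classifying map of every abstract closed \((n+1)\)-manifold. Everything else is a routine diagram chase with the exact sequences and pairings recalled in Section~\ref{sec:preliminaries}.
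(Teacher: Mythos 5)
Your overall strategy is close in spirit to the paper's, and the homological bookkeeping in the second half (the adjunction \(\langle\delta\alpha,c\rangle=\langle\alpha,\partial c\rangle\), the excision decomposition of \(H_{n+1}(X\hookrightarrow Z)\), and the additivity \(\bar{[Z]}\mapsto([Y_1],[Y_2])\)) is correct. The gap is in the step \(T^*(R)=0\), specifically in the claim that ``the tautological identifications \(\tau_{Y_i}^*\gamma_{n+1}^{q+1}=TY_i=TZ|_{Y_i}\) agree over \(X\).'' They do not. For \(x\in X\) one has \(T_xY_1=T_xY_2=T_xX\oplus\R e_{q+1}\) as subspaces of \(\R^{q+1}\), but the two canonical isomorphisms \(T_xZ\cong T_xY_i\) differ by \(\id_{T_xX}\oplus(-1)\): the inward normals of \(Y_1\) and of \(Y_2\) are both \(-e_{q+1}\), yet they correspond to opposite directions of the bicollar coordinate in \(Z\). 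Consequently \(T^*\gamma_{n+1}^{q+1}\) and \(TZ\) are presented by different clutching data over \(X\) (\(\id\) versus \(\id\oplus(-1)\) on \(TX\oplus\R e_{q+1}\)), and whether this discrepancy --- a fiberwise automorphism of determinant \(-1\) --- can be absorbed into an automorphism of \(TY_2\) defined on all of \(Y_2\) is exactly the kind of unstable question that makes tangential (as opposed to normal, stable) structures delicate. Without this you have not shown that \(T\) classifies \(TZ\), hence not that \(T\) is homotopic to the Gauss map of an embedding, hence not that \(T^*(R)=0\); and this is the step on which the whole proof rests.

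The paper avoids the issue entirely by never gluing two classifying maps: it chooses a single auxiliary null-cobordism \(A\subset\R^{q+1}_+\) for \(X\) on the \emph{other} side of \(\R^q\) and forms the embedded closed submanifolds \(Z_i\defeq Y_i\cup A\subset\R^{q+1}\). Their Gauss maps \(\tau_{Z_i}\) are globally defined and restrict literally to \(\tau_{Y_i}\) and \(\tau_A\), so \(\tau_{Z_i}^*(R)=0\) holds by the very definition of \(\mathcal{R}_{n+1}\), and both \(\varkappa_r^{Y_i}(X,g_X)\) are shown to equal the same quantity \(\langle(g_X,\tau_A)^*(r),[A]\rangle\). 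You could try to repair your argument in the same spirit by replacing \(Y_2\) with its reflection \(\rho(Y_2)\subset\R^{q+1}_+\) through \(\R^q\), so that \(Y_1\cup\rho(Y_2)\) is an embedded closed submanifold with an honest Gauss map; but then the contribution of the \(Y_2\)-piece to your pairing involves \(\rho_*\circ\tau_{Y_2}\) rather than \(\tau_{Y_2}\), and identifying the two contributions is again a nontrivial point. The paper's use of a common cap \(A\) for both \(Y_1\) and \(Y_2\) is precisely what makes any such comparison unnecessary.
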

\begin{proof}
Choose a compact submanifold \(A\) of \(\R_{+}^{q+1}:=\{(x_1,\dots,x_{q+1}) \in \R^{q+1} | x_{q+1}\geq 0\}\) such that \(X\) bounds \(A\). Consider the smooth submanifolds \(Z_i \defeq Y_i \cup A\) of \(\R^{q+1}=\R_+^{q+1}\cup \R_-^{q+1}\). Then \(\tau_{Y_i} = \tau_{Z_i} \circ (Y_i \hookrightarrow Z_i)\) for \(i = 1,2\). In addition, there is the commutative diagram for \(i = 1,2\):
\[\begin{tikzcd}
	&& {H^{n+1}(B\xrightarrow{i\circ f}\Gr_{n+1}(\R^{q+1}))} \\
	\\
	{H^{n+1}(X\hookrightarrow Y_i)} && {H^{n+1}(X\hookrightarrow Z_i)} && {H^{n+1}(X\hookrightarrow A).} \\
	&& {H^{n}(X)}
	\arrow["{{{(g_X, \tau_{Y_i})^*}}}"', from=1-3, to=3-1]
	\arrow["{{{(g_X,\tau_{Z_i})^*}}}", from=1-3, to=3-3]
	\arrow["{{{(g_X,\tau_A)^*}}}", from=1-3, to=3-5]
	\arrow["{(\id_X,Y_i\hookrightarrow Z_i)^*}"', from=3-3, to=3-1]
	\arrow["{(\id_X,A\hookrightarrow Z_i)^*}", from=3-3, to=3-5]
	\arrow["\delta", from=4-3, to=3-1]
	\arrow["\delta"', from=4-3, to=3-3]
	\arrow["\delta"', from=4-3, to=3-5]
\end{tikzcd}\]

From the commutativity of the upper left triangle we conclude that   $$\varkappa_r^{Y_i}(X,g_X)=\langle (g_X, \tau_{Y_i})^*(r),[Y_i]\rangle=\langle (\id_X, Y_i \hookrightarrow Z_i)^* \circ (g_X, \tau_{Z_i})^*(r),[Y_i]\rangle$$
for $i=1,2$. 

For the class $(g_X, \tau_{Z_i})^*(r)\in H^{n+1}(X\hookrightarrow Z_i)$ we have $$(g_X, \tau_{Z_i})^*(r)|_{Z_i} =\tau_{Z_i}^*(r|_{\Gr_{n+1}(\R^{q+1})})=\tau_{Z_i}^*(R).$$
Since $Z_i$ is closed and $R$ is a relation, we have $(g_X, \tau_{Z_i})^*(r)|_{Z_i}=0$. Then, from the long exact sequence of the map $(X\hookrightarrow Z_i)$ we conclude that $(g_X, \tau_{Z_i})^*(r)=\delta a_i$ for some $a_i\in H^{n}(X)$.

Thus
\begin{align*}
\varkappa_r^{Y_i}(X,g_X) &= \langle (\id_X, Y_i \hookrightarrow Z_i)^* \circ (g_X, \tau_{Z_i})^*(r), [Y_i]\rangle \\
&= \langle (\id_X, Y_i \hookrightarrow Z_i)^* (\delta a_i),[Y_i]\rangle = \langle \delta a_i,[Y_i]\rangle=\langle a_i, [X]\rangle.
\end{align*}
On the other hand
\begin{align*}
\langle (g_X, \tau_{A})^*(r),[A]\rangle &= \langle (\id_X, A\hookrightarrow Z_i)^* \circ (g_X, \tau_{Z_i})^* (r), [A]\rangle \\
&= \langle (\id_X, A\hookrightarrow Z_i)^* (\delta a_i),[A]\rangle = \langle \delta a_i,[A]\rangle=\langle a_i, [X]\rangle.
\end{align*}
So we have $\varkappa_r^{Y_i}(X,g_X)=\langle (g_X, \tau_{A})^*(r),[A]\rangle$ for $i=1,2$ and hence $\varkappa_r^{Y_1}(X,g_X)=\varkappa_r^{Y_2}(X,g_X)$.
\end{proof}

Since $\varkappa_r^Y$ does not depend on $Y$, we write $\varkappa_r$.
\begin{remark} Two \((B,f)\)-manifolds \((X,g_X)\) and \((X',g_{X'})\) are  called \((B,f)\)-\textit{isotopic} if there exists a continuous family of \((B,f)\)-manifolds \((X_t,g_{X_t})\), \(t\in [0,1]\), such that \((X_0,g_{X_0})=(X,g_X)\) and \((X_1,g_{X_1})=(X',g_{X'})\). The \((B,f)\)-isotopy relation is clearly an equivalence relation. The secondary Stiefel--Whitney number \(\varkappa_R\) is invariant under this relation.
\end{remark}
\begin{proposition}\label{prop:lift_independence}
If $R\neq w_{n+1}, 0$, then the relative lift $r$ of relation $R$ is unique.
\end{proposition}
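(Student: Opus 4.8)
The plan is to analyze the ambiguity in choosing a relative lift $r$ directly from the long exact sequence of the pair $(B \xrightarrow{i \circ f} \Gr_{n+1}(\R^{q+1}))$. Two relative lifts $r, r'$ of the same relation $R$ satisfy $(r - r')|_{\Gr_{n+1}(\R^{q+1})} = 0$, so by exactness $r - r' = \delta(u)$ for some $u \in H^n(\Gr_n(\R^q) \xleftarrow{f} B)$ — more precisely, the relevant portion of the sequence reads
\[
H^{n}(B \xrightarrow{i \circ f} \Gr_{n+1}(\R^{q+1})) \to H^{n}(\Gr_{n+1}(\R^{q+1})) \xrightarrow{(i\circ f)^*} H^{n}(B) \xrightarrow{\delta} H^{n+1}(B \xrightarrow{i\circ f}\Gr_{n+1}(\R^{q+1})) \to H^{n+1}(\Gr_{n+1}(\R^{q+1})),
\]
so the set of relative lifts is a torsor over $\operatorname{coker}\big((i\circ f)^* \colon H^n(\Gr_{n+1}(\R^{q+1})) \to H^n(B)\big)$. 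Thus uniqueness of $r$ is equivalent to the surjectivity of $(i \circ f)^* \colon H^n(\Gr_{n+1}(\R^{q+1})) \to H^n(B)$.

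Next I would compute $H^n(B)$ using the fibration $K(\Z/2, n) \to B \xrightarrow{f} \Gr_n(\R^q)$. In the Serre spectral sequence, the fiber contributes $H^n(K(\Z/2,n)) = \Z/2$, generated by the fundamental class $\iota_n$, which transgresses (up to the single differential $d_{n+1}$) to $i^*(R) \in H^{n+1}(\Gr_n(\R^q))$ by construction of $B$ as the pullback of the path–loop fibration along the classifying map $c$ of $i^*(R)$. Hence the fiber class $\iota_n$ survives to $H^n(B)$ precisely when $i^*(R) = 0$ in $H^{n+1}(\Gr_n(\R^q))$; in that case $H^n(B)$ is $H^n(\Gr_n(\R^q))$ plus a $\Z/2$ coming from $\iota_n$, and otherwise $H^n(B) \cong H^n(\Gr_n(\R^q))$ via $f^*$. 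In the latter case surjectivity of $(i\circ f)^*$ reduces to surjectivity of $i^* \colon H^n(\Gr_{n+1}(\R^{q+1})) \to H^n(\Gr_n(\R^q))$, which is classical (the map $i$ corresponds to stabilization, and every monomial in Stiefel–Whitney classes of degree $n < q$ lifts), so one gets uniqueness. The crux is therefore to show that the hypothesis $R \neq 0, w_{n+1}$ forces $i^*(R) \neq 0$, equivalently $\iota_n$ does not survive, equivalently $\iota_n$ is killed by a transgression — which by the pullback construction is exactly the statement $i^*(R) \neq 0$.

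So the real content is the purely algebraic claim: for $R \in \mathcal{R}_{n+1} \subseteq H^{n+1}(\Gr_{n+1}(\R^{q+1}))$, one has $i^*(R) = 0 \in H^{n+1}(\Gr_n(\R^q))$ if and only if $R \in \{0, w_{n+1}\}$. I would prove this using the known description of $\mathcal{R}_{n+1}$ from Dold and Brown cited in the paper: the kernel of $i^* \colon H^{n+1}(\Gr_{n+1}) \to H^{n+1}(\Gr_n)$ is spanned by monomials divisible by $w_{n+1}$, i.e.\ in degree $n+1$ exactly the line $\langle w_{n+1}\rangle$; intersecting with $\mathcal{R}_{n+1}$ and using that $\mathcal{R}_{n+1}$ contains $w_{n+1}$ when $n+1$ is odd but $w_{n+1} \notin \mathcal{R}_{n+1}$ when $n+1$ is even (where instead $w_{n+1} + v_{(n+1)/2}^2 \in \mathcal{R}_{n+1}$), one concludes $\ker(i^*) \cap \mathcal{R}_{n+1} = \langle w_{n+1}\rangle$ or $0$ accordingly, so $i^*(R) = 0$ forces $R \in \{0, w_{n+1}\}$.

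The main obstacle I anticipate is handling the spectral sequence bookkeeping cleanly — specifically, arguing that there is no room for the fiber generator $\iota_n$ to be killed by a differential other than the transgression $d_{n+1}$, and that no element outside the image of $f^*$ and the $\iota_n$-line can appear in total degree $n$ on the $E_\infty$-page. Because the base $\Gr_n(\R^q)$ is $(n-1)$-connected in the relevant range only through degree bounds rather than genuinely, I would instead avoid the full spectral sequence and argue directly: $B = \{(x,\omega) : x \in \Gr_n(\R^q),\ \omega\colon [0,1]\to K(\Z/2,n+1),\ \omega(0) = *,\ \omega(1) = c(x)\}$, so $B$ is the homotopy fiber of $c$, and $c$ is $(n+1)$-truncated-nontrivial exactly when $i^*(R) \neq 0$; then $f^* \colon H^n(\Gr_n) \to H^n(B)$ is an isomorphism iff $c_* \colon \pi_n \to \pi_n$-type obstructions vanish, which again comes down to the connecting map $H^n(B) \to H^n(\Gr_n) \to H^{n+1}(\text{fib of } f)\cong H^{n+1}$ sending the potential new class to $i^*(R)$. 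I will phrase the final writeup around the cofiber long exact sequence of $f$ together with this identification, keeping the spectral-sequence heuristic only as motivation.
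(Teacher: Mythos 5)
Your proposal is correct and follows essentially the same route as the paper: the Serre spectral sequence of $K(\Z/2,n)\to B\to\Gr_n(\R^q)$ shows $f^*\colon H^n(\Gr_n(\R^q))\to H^n(B)$ is an isomorphism once $i^*(R)\neq 0$ (which holds because $\ker\bigl(i^*\colon H^{n+1}(\Gr_{n+1}(\R^{q+1}))\to H^{n+1}(\Gr_n(\R^q))\bigr)=\{0,w_{n+1}\}$), and then the long exact sequence of $B\to\Gr_{n+1}(\R^{q+1})$ gives injectivity of the restriction, hence uniqueness of the lift. The bookkeeping worries in your last paragraph are unfounded --- since the fiber is $(n-1)$-connected, the only differential on $\iota_n$ is the transgression $d_{n+1}$ and the only other $E_\infty$ contribution in total degree $n$ is $E_\infty^{n,0}$ --- and the parity discussion of whether $w_{n+1}\in\mathcal{R}_{n+1}$ is not needed.
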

\begin{proof}
Consider the cohomological Serre spectral sequence for the fibration sequence
$$K(\Z/2, n) \to B\xrightarrow{f} \Gr_n(\R^q).$$
Let $\iota\in H^{n}(K(\Z/2,n))$ be the generator. Then $d_{n+1}(\iota)=i^*(R)$. Since
$$\mathrm{ker}(i^*)= \Z/2 = \{0,w_{n+1}\},$$
one has $i^*(R)\neq 0$, and the differential
$$d_{n+1}\colon E_{n+1}^{0,n} = H^0(\Gr_n(\R^q);H^n(K(\Z/2,n)))\to H^{n+1}(\Gr_n(\R^q);H^0(K(\Z/2,n))) = E_{n+1}^{n+1,0}$$ is injective, and hence ${E}_{n+2}^{0,n} = 0$. Thus, the map $f^*\colon H^n(\Gr_n(\R^q))\to H^n(B)$ is an isomorphism. 
Consider the long exact sequence of the map \((B\xrightarrow{i\circ f} \Gr_{n+1}(\R^{q+1}))\):
\[\begin{tikzcd}
	{H^n(\Gr_{n+1}(\R^{q+1}))} & {H^n(B)} & {H^{n+1}(B\to\Gr_{n+1}(\R^{q+1}))} & {H^{n+1}(\Gr_{n+1}(\R^{q+1}))}
	\arrow["{f^*\circ i^*}", from=1-1, to=1-2]
	\arrow["\delta", from=1-2, to=1-3]
	\arrow["{|_{\Gr_{n+1}(\R^{q+1})}}", from=1-3, to=1-4]
\end{tikzcd}\]
The composition \(f^*\circ i^*\) is an isomorphism. So, by exactness, we conclude that $$|_{\Gr_{n+1}(\R^{q+1})}\colon H^{n+1}(B\xrightarrow{i\circ f}\Gr_{n+1}(\R^{q+1}))\to H^{n+1}(\Gr_{n+1}(\R^{q+1}))$$ is a monomorphism.

\end{proof}
Let \(R \in \mathcal{R}_{n+1}\) be some relation and \(c \colon \Gr_n(\R^q) \to K(\Z/2, n+1)\) be a classifying map of the cohomological class \(i^*(R) \in H^{n+1}(\Gr_n(\R^q))\). Consider the following commutative square:
\[\begin{tikzcd}
	B & {PK(\Z/2,n+1)} \\
	{\Gr_{n+1}(\R^{q+1})} & {K(\Z/2,n+1).}
	\arrow[from=1-1, to=1-2]
	\arrow["{i\circ f}"', from=1-1, to=2-1]
	\arrow[from=1-2, to=2-2]
	\arrow["c"', from=2-1, to=2-2]
\end{tikzcd}\]
Denote by $\tilde\iota$ the generator of the group $H^{n+1}(PK(\Z/2,n+1) \to K(\Z/2,n+1))$.
Then there is a canonical relative lift $r \in H^{n+1}(B \to \Gr_{n+1}(\R^{q+1}))$ of $R$ defined as an inverse image of $\tilde\iota$ under the morphism of pairs induced by the square diagram above.
Next, we write $\varkappa_R$ to denote $\varkappa_r$ for this canonical lift $r$. By Proposition~\ref{prop:lift_independence}, if \(R\neq w_{n+1},0\), the canonical lift \(r\) is the only relative lift that exists.
\begin{lemma}\label{lem:choice_of_lift}
 Suppose that \(X\) and \(Y\) are two spaces homotopy equivalent to \textbf{CW} complexes, and \(f\colon X\to Y\) and \(c\colon Y\to K(\Z/2,i)\) are two continuous maps. For every class \(x\in H^{i}(X\xrightarrow{f}Y)\) such that \(x|_Y = c^*(\iota)\in H^{i}(Y)\) there exists a map \(p\colon X\to PK(\Z/2,i)\) such that the following diagram commutes 
\[\begin{tikzcd}
	{X} & {PK(\Z/2,i)} \\
	{Y} & {K(\Z/2,i),}
	\arrow["{{{p}}}", dashed, from=1-1, to=1-2]
	\arrow["{f}"', from=1-1, to=2-1]
	\arrow[from=1-2, to=2-2]
	\arrow["c", from=2-1, to=2-2]
\end{tikzcd}\] and
$$(p,c)^*(\tilde\iota) = x \in  H^{i}(X\xrightarrow{f} Y).$$
\end{lemma}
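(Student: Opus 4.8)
The goal is to recognize the desired map $p\colon X\to PK(\Z/2,i)$ as a \emph{lift} against the fibration $PK(\Z/2,i)\to K(\Z/2,i)$, and then to interpret the existence of such a lift (together with the condition on $\tilde\iota$) as a cohomological condition that is guaranteed by the hypothesis $x|_Y=c^*(\iota)$. The plan is to first replace $f\colon X\to Y$ by a cofibration up to homotopy, so that $H^i(X\xrightarrow{f}Y)$ may be identified with the relative cohomology $H^i(M_f, X)$ of the mapping cylinder pair; since $X$ and $Y$ are homotopy equivalent to CW complexes, everything may be assumed CW and this replacement is harmless. Then I would unwind what the square asks for: a map $p$ making the square commute is the same as a nullhomotopy of the composite $X\xrightarrow{f}Y\xrightarrow{c}K(\Z/2,i)$, i.e.\ an extension of $c\circ f$ over the cone on $X$ relative to $M_f$ — equivalently a map from $\mathrm{hocofib}(f)$ to $K(\Z/2,i)$ lifting a prescribed map on $Y$. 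Concretely, pairs $(p, \text{square-commutativity})$ correspond bijectively to classes $x'\in H^i(X\xrightarrow{f}Y)$ with $x'|_Y=c^*(\iota)$, under which $(p,c)^*(\tilde\iota)=x'$; this is exactly the universal property of the pair $(PK(\Z/2,i)\to K(\Z/2,i))$ as the ``universal relative lift'' of $\iota$, in the sense already invoked right before the lemma to define the canonical lift $r$.

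The key steps, in order. (1) Reduce to CW complexes and to a cofibration $X\hookrightarrow M_f$ with $H^i(X\xrightarrow{f}Y)\cong H^i(M_f,X)$ and the long exact sequence compatible with the one in the excerpt. (2) Form the homotopy cofiber $C=\mathrm{hocofib}(f)=M_f/X$; then a class $x\in H^i(M_f,X)$ is the same as a based map $\chi\colon C\to K(\Z/2,i)$ (using $i\ge 1$ and representability of mod-$2$ cohomology by Eilenberg--MacLane spaces), and the condition $x|_Y=c^*(\iota)$ says precisely that the composite $Y\to M_f\to C\xrightarrow{\chi}K(\Z/2,i)$ is homotopic to $c$ (here I use that $Y\to M_f$ is a homotopy equivalence and $M_f\to C$ followed by $\chi$ restricts to $c$ on $Y$ up to the chosen homotopy). (3) The map $Y\to C$ sits in the cofiber sequence $X\xrightarrow{f}Y\to C$, so a nullhomotopy of $X\to Y\to C\xrightarrow{\chi}K(\Z/2,i)$ — which exists because $X\to C$ is nullhomotopic in a cofiber sequence — combined with $\chi$ itself, packages into exactly a map of squares from $(X\hookrightarrow M_f)$, hence from $(X\xrightarrow{f}Y)$, into the universal square $(PK(\Z/2,i)\to K(\Z/2,i))$. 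Unravelling this map of squares yields the desired $p\colon X\to PK(\Z/2,i)$ with $\pi\circ p = c\circ f$ on the nose (after a further homotopy, using the fibration property of $\pi\colon PK\to K$ to rigidify). (4) Check that under this construction $(p,c)^*(\tilde\iota)=x$: this is immediate from the fact that $\tilde\iota\in H^i(PK\to K)$ is \emph{the} universal class, i.e.\ it corresponds under step (2)'s identification to $\mathrm{id}\colon K(\Z/2,i)\to K(\Z/2,i)$ viewed on the cofiber $\mathrm{hocofib}(\pi)\simeq K(\Z/2,i)$, so pulling back along the map of squares we built recovers $\chi$, i.e.\ $x$.

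Alternatively — and this is probably the cleanest route to write — one can avoid cofiber gymnastics and argue directly with the homotopy pullback. The square defining $B$ exhibits $B$ (and hence, by the same token, any $X$ with a map to $Y$ and a choice of lift data) as a homotopy pullback of $PK(\Z/2,i)\to K(\Z/2,i)\leftarrow Y$; a map $X\to PK(\Z/2,i)$ over $Y$ is the same as a point of this homotopy pullback, which is the same as a map $X\to \mathrm{hofib}(c)$ over $Y$, which in turn is classified up to the relevant indeterminacy by a nullhomotopy of $c\circ f$, i.e.\ a factorization of $c\circ f$ through the path space. The obstruction-theoretic content is nil: a nullhomotopy of $c\circ f\colon X\to K(\Z/2,i)$ is literally the data of a based map from $\Sigma^{0}$ of the cone — concretely, of a cochain witnessing that the cocycle $c\circ f$ representing $f^*c^*\iota\in H^i(X)$ is a coboundary, and the hypothesis $x|_Y=c^*\iota$ in $H^i(X\xrightarrow{f}Y)$ provides exactly such a bounding cochain via the connecting map $\delta$ in the long exact sequence of the excerpt. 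So one builds $p$ from this nullhomotopy and then verifies $(p,c)^*(\tilde\iota)=x$ by naturality of the long exact sequences and the defining property of $\tilde\iota$.

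\textbf{Main obstacle.} The genuinely fiddly point is step (4)/the last sentence: matching the class one gets back, $(p,c)^*(\tilde\iota)$, with the original $x$ \emph{on the nose} rather than merely up to the ambiguity in $H^i(X\xrightarrow{f}Y)$ that comes from different choices of nullhomotopy. Different nullhomotopies of $c\circ f$ differ by an element of $H^{i-1}(X)$, and this is precisely the indeterminacy in $H^i(X\xrightarrow{f}Y)$ coming from the image of $\delta$; one must check that the assignment (nullhomotopy $\mapsto$ $p$ $\mapsto$ $(p,c)^*(\tilde\iota)$) is \emph{equivariant} for this $H^{i-1}(X)$-action in the way that lets one hit exactly $x$. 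Equivalently, one must verify that the map of squares $X\to PK$ one constructs depends on the choice of nullhomotopy in a way that shifts $(p,c)^*(\tilde\iota)$ by the corresponding element of $\delta H^{i-1}(X)$ — a diagram chase with the long exact sequences that is conceptually clear but requires care to state without sign/indexing slips. Everything else (CW reduction, representability, the cofiber sequence $X\to Y\to\mathrm{hocofib}(f)$, the fibration $PK\to K$) is standard homotopy theory.
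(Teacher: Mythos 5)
Your proposal follows essentially the same route as the paper's proof: represent the class \(x\) by a map \(\tilde p\colon\mathrm{hocofib}(f)\to K(\Z/2,i)\) restricting to \(c\) on \(Y\), use the resulting nullhomotopy of \(c\circ f\) to produce \(p\colon X\to PK(\Z/2,i)\), and verify \((p,c)^*(\tilde\iota)=x\) from the universality of \(\tilde\iota\). The indeterminacy issue you flag as the ``main obstacle'' is real but is resolved exactly as you indicate --- by choosing the nullhomotopy coming from \(\tilde p\) representing \(x\) itself --- and is the content the paper compresses into ``it is easy to check.''
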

\begin{proof}
    Since \((c\circ f)^*(\iota)=0\), the map \(c\circ f\) is homotopic to the zero map, and there is a map \(\tilde p\colon\mathrm{hocofib}(f)\to K(\Z/2,i)\) such that \(c=\tilde p \circ (Y\to\mathrm{hocofib}(f))\). There is a corresponding map \(p\colon X\to PK(\Z/2,i)\) such that \(c\circ f = (PK(\Z/2,i)\to K(\Z/2,i))\circ p\). It is easy to check that \((p,c)^*(\tilde\iota) = x \in  H^{i}(X\xrightarrow{f} Y)\).
\end{proof}

Let us now consider examples of the secondary Stiefel--Whitney numbers for particular relations.

If $n$ is even, then there is a relation $R=w_{n+1}\in\mathcal{R}_{n+1}$.

According to \cite{Ker}, there is \textit{ a relative Stiefel--Whitney class} $$\tilde{w}_{n+1}\in H^{n+1}(\Gr_{n+1}(\R^{q+1}),\Gr_n(\R^q))\cong H^{n+1}(\Gr_n(\R^q)\xrightarrow{i} \Gr_{n+1}(\R^{q+1})),$$ which has the following properties:
\begin{enumerate}
\item $\tilde{w}_{n+1}|_{\Gr_{n+1}(\R^{q+1})}=w_{n+1}$,
\item for an $(n+1)$-dimensional cobordism $Y$ between manifolds $X$ and $X'$ there is an equality
$$\langle(\tau_{X\sqcup X'}, \tau_Y)^*(\tilde{w}_{n+1}),[Y]\rangle=\chi(Y)-\chi(X)\mod 2,$$  where $\chi(-)$ is the Euler characteristic. 
\end{enumerate}
Since $H^{n+1}(\Gr_n(\R^q) \to \Gr_{n+1}(\R^{q+1})) = \Z/2$, and $\tilde{w}_{n+1} \ne 0$, we have $$(f, \id_{\Gr_{n+1}(\R^{q+1})})^*(\tilde{w}_{n+1}) = r,$$
where \(r\) is the canonical lift for $R = w_{n+1}$ defined above.

\begin{proposition}
Let $n$ be an even integer.
Then for every bounding $w_{n+1}$-manifold $(X, g_X)$ there is an equality
$$\varkappa_{w_{n+1}}(X,g_X) =\frac{\chi(X)}2\;\mathrm{mod}\; 2.$$
\end{proposition}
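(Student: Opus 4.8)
The plan is to reduce $\varkappa_{w_{n+1}}$ to the relative Stiefel--Whitney class $\tilde w_{n+1}$ by means of the identity $(f,\id_{\Gr_{n+1}(\R^{q+1})})^*(\tilde w_{n+1}) = r$ established just above, then to invoke property~(2) of $\tilde w_{n+1}$, and finally to evaluate the resulting Euler characteristics using the classical fact that a compact manifold $W$ of odd dimension satisfies $\chi(\partial W) = 2\chi(W)$.

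First I would fix a null-cobordism $Y\subseteq\R^{q+1}_-$ for $X$, which exists since $X$ bounds; here $\partial Y = X$ and $\dim Y = n+1$. By Definition~\ref{def:generalized_semicharacteristic} and Lemma~\ref{ind} we have $\varkappa_{w_{n+1}}(X,g_X) = \langle (g_X,\tau_Y)^*(r),[Y]\rangle$. Applying the functoriality of $(-,-)^*$ to the composite of morphisms of pairs $(f,\id_{\Gr_{n+1}(\R^{q+1})})\circ(g_X,\tau_Y) = (f\circ g_X,\tau_Y) = (\tau_X,\tau_Y)$, where the last equality is the defining property $f\circ g_X = \tau_X$ of a $(B,f)$-structure, together with $(f,\id_{\Gr_{n+1}(\R^{q+1})})^*(\tilde w_{n+1}) = r$, gives $(g_X,\tau_Y)^*(r) = (\tau_X,\tau_Y)^*(\tilde w_{n+1})$. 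Hence $\varkappa_{w_{n+1}}(X,g_X) = \langle (\tau_X,\tau_Y)^*(\tilde w_{n+1}),[Y]\rangle$. Now regarding $Y$ as a cobordism between $X$ and $X' = \emptyset$, property~(2) of $\tilde w_{n+1}$ yields $\varkappa_{w_{n+1}}(X,g_X) = \chi(Y) - \chi(X) \bmod 2$.

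It remains to compute $\chi(Y)$. Since $n$ is even, $\dim Y = n+1$ is odd; forming the double $DY = Y\cup_X Y$, inclusion--exclusion gives $\chi(DY) = 2\chi(Y) - \chi(X)$, while $\chi(DY) = 0$ because $DY$ is a closed odd-dimensional manifold. Thus $\chi(Y) = \chi(X)/2$; in particular $\chi(X)$ is even, so $\chi(X)/2 \bmod 2$ is well defined. Therefore $\varkappa_{w_{n+1}}(X,g_X) = \chi(X)/2 - \chi(X) \equiv \chi(X)/2 \pmod 2$, as claimed. There is no real obstacle here: the only points that need a little care are the bookkeeping of functoriality for morphisms of homotopy cofibers and the application of property~(2) in the degenerate case $X' = \emptyset$; the parity identity $\chi(\partial W) = 2\chi(W)$ for odd-dimensional $W$ is standard.
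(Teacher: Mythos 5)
Your proof is correct and follows essentially the same route as the paper: reduce $\varkappa_{w_{n+1}}$ to $\langle(\tau_X,\tau_Y)^*(\tilde w_{n+1}),[Y]\rangle$ via $(f,\id)^*(\tilde w_{n+1})=r$, apply property~(2) with $X'=\emptyset$, and use $\chi(X)=2\chi(Y)$ for a bounding even-dimensional $X$. You merely spell out the functoriality bookkeeping and the doubling argument that the paper leaves implicit.
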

\begin{proof}
If the $n$-dimensional manifold $X$ bounds $Y$, then $\chi(X)=2\chi(Y)$. The result follows from the second property of $\tilde{w}_{n+1}$ above.
\end{proof}
Let $n = 2k-1$ be an odd integer. Consider the class $w_{n+1} + v_k^2\in H^{n+1}(\Gr_{n+1}(\R^{q+1}))$, where $v_k$ is the $k$-th Wu class of the vector bundle $\gamma_{n+1}$. By the Wu formula, we have $$w_{n+1} + v_k^2 =\smashoperator[r]{\sum_{i=0}^{k}}\mathrm{Sq}^i(v_{n+1-i}) + v_k^2 = \smashoperator[r]{\sum_{i=0}^{k-1}}\mathrm{Sq}^i(v_{n+1-i}).$$
If $Z$ is a closed $(n+1)$-manifold, then $v_j(\tau_Z)=0$ for $j>k$ and therefore
$$\tau_Z^*(w_{n+1} + v_k^2) = \tau_Z^*\left(\smashoperator[r]{\sum_{i=0}^{k-1}}\mathrm{Sq}^i(v_{n+1-i})\right) = 0.$$
Hence, $w_{n+1} + v_k^2\in\mathcal{R}_{n+1}$.

For a closed odd-dimensional manifold $X$, there is the Kervaire semi-characteristic 
$$K(X):=\frac{\operatorname{dim}H^*(X)}{2}\bmod2.$$

Consider the space $\tilde B = \hofib(\tilde c)$, where $\tilde{c}\colon \Gr_n(\R^q)\to K(\Z/2,k)$
is the classifying map of the Wu class $i^*(v_k)$. Denote by \(\tilde{f} \colon \tilde{B} \to \Gr_n(\R^q)\) the canonical map. The \((\tilde{B},\tilde{f})\)-structure is called \textit{Wu structure} or \(v_k\)-\textit{structure}. For a closed manifold \(X\) for \(k > \mathrm{dim}\,X/2\) Wu's class \(v_k\) automatically vanishes on \(X\), and hence \(v_k\)-structure always exists.

Note that the cohomological class $\tilde{f}^*(v_k)\in H^k(\tilde{B})$ vanishes. From the long exact sequence for the map $\tilde{f}$, there is a class $$u\in H^{k}(\tilde{B}\xrightarrow{i \circ \tilde{f}}\Gr_{n+1}(\R^{q+1}))$$
such that $u|_{\Gr_{n+1}(\R^{q+1})}=v_k$.
By \cite[Lemma 4.2]{Pod1}, for a bounding $(\tilde{B},\tilde{f})$-manifold $(X,\tilde{g}_X)$ and a null-cobordism $Y$ for $X$ we have
\begin{equation}
\label{eq:e1}
K(X) = \langle(\tilde{g}_X, \tau_Y)^*(u^2+(\tilde{f}, \mathrm{id}_{\Gr_{n+1}(\R^{q+1})})^*(\tilde{w}_{n+1})),[Y]\rangle.
\end{equation}

As before, $B$ is the homotopy fiber of the classifying map $c\colon \Gr_n(\R^q)\to K(\Z/2,n+1)$ of \(i^*(R)\) for the relation $R = w_{n+1} + v_k^2$ and $f\colon B\to \Gr_n(\R^q)$ is the canonical map. 

Let \(\tilde g_X\) be some \(v_k\)-structure on \(X\). Since $B=\hofib(c)$ and
$$\tilde{f}^*(w_{n+1}+v_k^2)=0 + 0\cdot 0 = 0\in H^{n+1}(\tilde{B}),$$
the set of lifts \(\{p\colon\tilde B\to B) | f \circ p = \tilde f\}\) is non-empty. Every choice of lift \(p\) induces an \(R\)-structure on \(X\) given by \(g_X := p \circ \tilde g_X\). 

Now let us show that our invariant $\varkappa_R$ built from the relation $R=w_{n+1}+v_k^2$ is equal the Kervaire semi-characteristic.
\begin{proposition}\label{th:our_is_kervaire}
Let $n = 2k-1$ be an odd integer, and \(R=w_{n+1}+v_k^2\).
There exists a lift \(p\colon\tilde B\to B\) such that for every bounding manifold $X$ with a $v_k$-structure \(\tilde g_X\) for the corresponding \(R\)-manifold \((X,p\circ\tilde g_X)\) the equality 
$\varkappa_R(X, p\circ\tilde g_X) = K(X)$ holds.
\end{proposition}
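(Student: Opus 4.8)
The plan is to single out, among the cohomology classes over $\tilde B$ that restrict to $R$ on $\Gr_{n+1}(\R^{q+1})$, the one appearing on the right-hand side of \eqref{eq:e1}, and then to exhibit a lift $p\colon\tilde B\to B$ whose associated pullback of the canonical relative lift $r$ is exactly that class. First I would set
\[
\xi\defeq u^2+(\tilde f,\id_{\Gr_{n+1}(\R^{q+1})})^*(\tilde w_{n+1})\in H^{n+1}\bigl(\tilde B\xrightarrow{i\circ\tilde f}\Gr_{n+1}(\R^{q+1})\bigr),
\]
where $u^2$ is formed with the relative cup product and $2k=n+1$. Restricting to $\Gr_{n+1}(\R^{q+1})$ and using $u|_{\Gr_{n+1}(\R^{q+1})}=v_k$ together with $\tilde w_{n+1}|_{\Gr_{n+1}(\R^{q+1})}=w_{n+1}$, I obtain $\xi|_{\Gr_{n+1}(\R^{q+1})}=v_k^2+w_{n+1}=R$; so $\xi$ is a relative lift of $R$, but one living over $\tilde B$ rather than over $B$.

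Next I would realize $\xi$ by a map into a path space and push it into $B$. Write $c\colon\Gr_{n+1}(\R^{q+1})\to K(\Z/2,n+1)$ for a classifying map of $R$; then $c\circ i$ classifies $i^*(R)$, the space $B$ is the pullback of $PK(\Z/2,n+1)\to K(\Z/2,n+1)$ along $c\circ i\colon\Gr_n(\R^q)\to K(\Z/2,n+1)$, and $r=(\pi,c)^*(\tilde\iota)$, where $\pi\colon B\to PK(\Z/2,n+1)$ is the projection and $(\pi,c)$ is the morphism of pairs $(B\xrightarrow{i\circ f}\Gr_{n+1}(\R^{q+1}))\to(PK(\Z/2,n+1)\to K(\Z/2,n+1))$. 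I would then apply Lemma~\ref{lem:choice_of_lift} with $X=\tilde B$, $Y=\Gr_{n+1}(\R^{q+1})$, the map $i\circ\tilde f$ in place of $f$, that $c$, and $x=\xi$ (the hypothesis holds by the previous paragraph); this produces a map $P\colon\tilde B\to PK(\Z/2,n+1)$ with $\bigl(PK(\Z/2,n+1)\to K(\Z/2,n+1)\bigr)\circ P=c\circ i\circ\tilde f$ and $(P,c)^*(\tilde\iota)=\xi$. Since the composites of $P$ and of $\tilde f$ into $K(\Z/2,n+1)$ coincide, the pair $(\tilde f,P)$ defines a map $p\colon\tilde B\to B$ into the pullback, with $f\circ p=\tilde f$ and $\pi\circ p=P$. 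Naturality of $(-)^*$ for morphisms of pairs then yields
\[
(p,\id_{\Gr_{n+1}(\R^{q+1})})^*(r)=(P,c)^*(\tilde\iota)=\xi=u^2+(\tilde f,\id_{\Gr_{n+1}(\R^{q+1})})^*(\tilde w_{n+1}).
\]

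Finally I would compute $\varkappa_R$ for this $p$. For a bounding manifold $X$ with $v_k$-structure $\tilde g_X$, the composite $g_X\defeq p\circ\tilde g_X$ satisfies $f\circ g_X=\tilde f\circ\tilde g_X=\tau_X$, so it is an $R$-structure on $X$. Choosing a null-cobordism $Y$ for $X$, the morphism of pairs $(g_X,\tau_Y)\colon(X\hookrightarrow Y)\to(B\xrightarrow{i\circ f}\Gr_{n+1}(\R^{q+1}))$ factors as $(p,\id_{\Gr_{n+1}(\R^{q+1})})\circ(\tilde g_X,\tau_Y)$, so by the displayed identity and \eqref{eq:e1},
\[
\varkappa_R(X,g_X)=\langle(g_X,\tau_Y)^*(r),[Y]\rangle=\bigl\langle(\tilde g_X,\tau_Y)^*\bigl(u^2+(\tilde f,\id_{\Gr_{n+1}(\R^{q+1})})^*(\tilde w_{n+1})\bigr),[Y]\bigr\rangle=K(X).
\]

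The main obstacle I anticipate is the diagram chase in the middle step: matching the square that defines the canonical lift $r$ with the one arising from the path-fibration pullback along $c\circ i$, invoking Lemma~\ref{lem:choice_of_lift} for the classifying map of $R$ on $\Gr_{n+1}(\R^{q+1})$ (rather than the one on $\Gr_n(\R^q)$), and verifying that all the maps into $K(\Z/2,n+1)$ are compatible so that $(\tilde f,P)$ genuinely factors through $B$. The remaining ingredients are routine: the restriction computation $\xi|_{\Gr_{n+1}(\R^{q+1})}=R$, functoriality of the pullback-on-pairs and of the pairing $\langle-,-\rangle$, and the already-proven formula \eqref{eq:e1}. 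I would also note at the outset that $B$, $\tilde B$, the Grassmannians and the Eilenberg--MacLane spaces all have CW homotopy type, so that Lemma~\ref{lem:choice_of_lift} is applicable.
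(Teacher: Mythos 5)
Your proposal is correct and follows essentially the same route as the paper: you apply Lemma~\ref{lem:choice_of_lift} to the class $u^2+(\tilde f,\id)^*(\tilde w_{n+1})$ to produce a map $\tilde B\to PK(\Z/2,n+1)$, factor it through $B=\hofib(c)$ to obtain $p$ with $(p,\id)^*(r)=u^2+(\tilde f,\id)^*(\tilde w_{n+1})$, and then combine the factorization $(g_X,\tau_Y)=(p,\id)\circ(\tilde g_X,\tau_Y)$ with \eqref{eq:e1}. Your explicit handling of the classifying map of $R$ on $\Gr_{n+1}(\R^{q+1})$ versus $i^*(R)$ on $\Gr_n(\R^q)$ is a welcome clarification of a point the paper leaves implicit, but the argument is the same.
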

\begin{proof}
Since \(u^2 + (\tilde f, \id)^*(\tilde w_{n+1})\in H^{n+1}(\tilde{B}\xrightarrow{i\circ \tilde{f}}\Gr_{n+1}(\R^{q+1}))\) lifts \(w_{n+1}+v_k^2 =c^*(\iota)\in H^{n+1}(\Gr_{n+1}(\R^{q+1}))\), according to \ref{lem:choice_of_lift}, there is a map \(\tilde p\colon\tilde B\to PK(\Z/2,n+1)\) such that the 
$$(\tilde p, c)^*(\tilde\iota) = u^2 + (\tilde f, \id)^*(\tilde w_{n+1}) \in  H^{n+1}(\tilde{B}\xrightarrow{i\circ \tilde{f}}\Gr_{n+1}(\R^{q+1})).$$
 Since $B=\hofib(c)$, there is a corresponding map \(p\colon\tilde B\to B\) such that $$(B\to PK(\Z/2,n+1))\circ p=\tilde p.$$
 Then we have the equality 
\begin{equation}
\label{eq:e4}
u^2 + (\tilde{f}, \mathrm{id})^*(\tilde{w}_{n+1}) = (p, \id)^*(r)\in H^{n+1}(\tilde B\to\Gr_{n+1}(\R^{q+1})).
\end{equation}
For a bounding \(v_k\)-manifold define \(g_X:= p\circ\tilde g_X\). For the bounding $R$-manifold $(X,g_X)$ with a specified null-cobordism $Y$ there is the equality
\begin{equation}
\label{eq:e2}
\varkappa_R(X,g_X)=\langle(g_X, \tau_Y)^*(r),[Y]\rangle.
\end{equation}
Let \(Y\) be some null-cobordism for a \((\tilde{B}, \tilde f)\)-manifold \((X, \tilde g_X)\).
Consider the commutative diagram 
\begin{equation}\label{eq:e3}
\begin{tikzcd}[cramped]
	&& {H^{n+1}(B\xrightarrow{i\circ f}\Gr_{n+1}(\R^{q+1}))} \\
	{H^{n+1}(X\hookrightarrow Y)} \\
	&& {H^{n+1}(\tilde{B}\xrightarrow{i\circ \tilde{f}}\Gr_{n+1}(\R^{q+1})).}
	\arrow["{{{(g_X, \tau_Y)^*}}}"', curve={height=12pt}, from=1-3, to=2-1]
	\arrow["{{{(p, \id)^*}}}"', from=1-3, to=3-3]
	\arrow["{{{(\tilde{g}_X, \tau_Y)^*}}}", curve={height=-12pt}, from=3-3, to=2-1]
\end{tikzcd}
\end{equation}


Therefore, we have
\begin{align*}
K(X) - \varkappa_R(X,g_X) &\overset{\eqref{eq:e1}, \eqref{eq:e2}}{=} \langle(\tilde{g}_X, \tau_Y)^*(u^2+(\tilde{f}, \mathrm{id})^*(\tilde{w}_{n+1})) - (g_X, \tau_Y)^*(r),[Y]\rangle \\
&\overset{\eqref{eq:e3}}{=} \langle(\tilde{g}_X, \tau_Y)^*(u^2 + (\tilde{f}, \mathrm{id})^*(\tilde{w}_{n+1}) - (p, \id)^*(r)), [Y]\rangle\\
&\overset{\eqref{eq:e4}}{=} 0.\qedhere
\end{align*}

\end{proof}

\section{Cobordism group $\Omega_n^R$}
\label{sec:cobordism_group}
Fix a relation $R\in \mathcal{R}_{n+1}$ and consider its classifying map $\Gr_{n+1}(\R^{q+1})\xrightarrow{c_1} K(\Z/2,n+1)$. Let $B_1$ be the pullback of the path space fibration over \(K(\Z/2,n+1)\) along $c_1$ and $f_1 \colon B_1\to \Gr_{n+1}(\R^{q+1})$ be the canonical map.

Consider the pullback square
\[\begin{tikzcd}[cramped]
B & {B_1} \\
{\Gr_n(\R^q)} & {\Gr_{n+1}(\R^{q+1}).}
\arrow["j", from=1-1, to=1-2]
\arrow["f"', from=1-1, to=2-1]
\arrow["\lrcorner"{anchor=center, pos=0.125}, draw=none, from=1-1, to=2-2]
\arrow["{{f_1}}", from=1-2, to=2-2]
\arrow["i"', from=2-1, to=2-2]
\end{tikzcd}\]

Then we have the morphism of pairs $(j,\id)\colon(B\xrightarrow{i\circ f} \Gr_{n+1}(\R^{q+1}))\to (B_1\xrightarrow{f_1} \Gr_{n+1}(\R^{q+1}))$ and the corresponding morphism of long exact sequences:
\[\begin{tikzcd}
	{H^{n+1}(B_1\xrightarrow{f_1}\Gr_{n+1}(\R^{q+1}))} && {H^{n+1}(\Gr_{n+1}(\R^{q+1}))} && {H^{n+1}(B_1)} \\
	{H^{n+1}(B\xrightarrow{i\circ f}\Gr_{n+1}(\R^{q+1}))} && {H^{n+1}(\Gr_{n+1}(\R^{q+1}))} && {H^{n+1}(B).}
	\arrow["{|_{\Gr_{n+1}(\R^{q+1})}}", from=1-1, to=1-3]
	\arrow["{(j,\id)^*}"', from=1-1, to=2-1]
	\arrow["{f_{1}^*}", from=1-3, to=1-5]
	\arrow[equals, from=1-3, to=2-3]
	\arrow["{j^*}"', from=1-5, to=2-5]
	\arrow["{|_{\Gr_{n+1}(\R^{q+1})}}", from=2-1, to=2-3]
	\arrow["{f^*\circ i^*}", from=2-3, to=2-5]
\end{tikzcd}\]

There is a canonical map $p\colon B_1 \to PK(\Z/2, n+1)$:
\[\begin{tikzcd}[cramped]
&& {PK(\Z/2,n+1)} \\
{B_1} & {\Gr_{n+1}(\R^{q+1})} & {K(\Z/2,n+1)}
\arrow[from=1-3, to=2-3]
\arrow["p", dashed, from=2-1, to=1-3]
\arrow["{f_1}"', from=2-1, to=2-2]
\arrow["{c_1}"', from=2-2, to=2-3]
\end{tikzcd}\]
As before, denote by $\tilde\iota$ the generator of the group $H^{n+1}(PK(\Z/2,n+1) \to K(\Z/2,n+1))$.
Define a cohomological class $s \in H^{n+1}(B_1 \to \Gr_{n+1}(\R^{q+1}))$ is defined as an inverse image $s = (p, c_1)^*(\tilde\iota)$. This class lifts our relation: $s|_{\Gr_{n+1}(\R^{q+1})}=R$.

This allows us to define a relative lift
\begin{equation}
\label{eq:def_of_can_r}
r:=(j,\id)^*(s) \in H^{n+1}(B\xrightarrow{i\circ f}\Gr_{n+1}(\R^{q+1}))
\end{equation}
for $R$, and hence the secondary Stiefel--Whitney number $\varkappa_R$ is defined for a bounding $(B,f)$-manifold.
\begin{remark}
Note that the secondary Stiefel--Whitney number is additive: for two bounding $R$-manifolds $(X,g_X)$ and $(X',g_{X'})$ one can choose two disjoint null-cobordisms $Y$ and $Y'$ for ${X}$ and ${X}'$ respectively and then the following sequence of equalities holds
\begin{align*}
\varkappa_R(X\sqcup X',g_{X\sqcup X'}) &= \langle({g}_{{X}}\sqcup {g}_{{X}'}, \tau_{Y}\sqcup \tau_{Y'})^*(r),[Y\sqcup Y'] \rangle \\
&= \langle({g}_{{X}}, \tau_{Y})^*(r),[Y] \rangle + \langle({g}_{{X}'}, \tau_{Y'})^*(r),[Y'] \rangle =\varkappa_R({X},g_{{X}})+\varkappa_R({X}',g_{{X}'}).
\end{align*}
\end{remark}

\begin{definition}\label{def:R_cobordism_group}
Let $\Omega_n^R$ be the abelian group of $(B_1,f_1)$-cobordism classes of $R$-manifolds. For an $R$-manifold $(X,g_X)$ denote its class as $[X,g_X]^R$.
\end{definition}
Consider the natural forgetting homomorphism $U\colon\Omega_n^R \to \Omega_n^O$ given by $U([X,g_X]^R)=[X]^O$. By Remark~\ref{exi}, this homomorphism is surjective. The next lemma effectively describes its kernel.

\begin{lemma}\label{lem:kernel_characterization}
For a bounding $R$-manifold $(X,g_X)$, $[X,g_X]^R=0$ if and only if $\varkappa_R(X,g_X)=0$.
\end{lemma}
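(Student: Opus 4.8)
The plan is to prove both implications by constructing explicit cobordisms. The direction "$[X,g_X]^R=0 \Rightarrow \varkappa_R(X,g_X)=0$" should be the easy one: if $(X,g_X)$ is null-$(B_1,f_1)$-cobordant, there is a null-cobordism $Y \subseteq \R^{q+1}_-$ together with a $(B_1,f_1)$-structure $g_Y \colon Y \to B_1$ restricting (via $j$) to $g_X$ on $X$. I would then compute $\varkappa_R(X,g_X) = \langle (g_X,\tau_Y)^*(r),[Y]\rangle$ using this particular $Y$ (legitimate by Lemma~\ref{ind}), and observe that the morphism of pairs $(g_X,\tau_Y)$ factors through $(g_Y, f_1\circ g_Y)\colon (X \hookrightarrow Y)\to(B_1 \xrightarrow{f_1}\Gr_{n+1}(\R^{q+1}))$ followed by $(j,\id)^*$; since $r = (j,\id)^*(s)$ by \eqref{eq:def_of_can_r}, we get $\varkappa_R(X,g_X)=\langle g_Y^*\,(\text{something})|_{\ldots},[Y]\rangle$. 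Concretely $s|_{\Gr_{n+1}(\R^{q+1})}=R$ pulls back along $\tau_Y\circ g_Y$ to $\tau_Y^*(R)$, and because $g_Y$ is a genuine absolute map $Y\to B_1$, the relative class $(g_Y,\tau_Y g_Y)^*(s)$ is the coboundary $\delta$ of a class on... — actually the cleanest argument: $s = (p,c_1)^*(\tilde\iota)$ and $g_Y$ lifts into $B_1$, so $(g_X, \tau_Y)^*(r) = (g_Y)^*\circ(\text{restriction of } s) $ pulled back, and since the composite $Y \to B_1 \to \Gr_{n+1}(\R^{q+1}) \xrightarrow{c_1} K(\Z/2,n+1)$ extends over all of $Y$ (not just through the cofiber), the relative evaluation vanishes. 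I would phrase this as: the pair map $(X\hookrightarrow Y)\to (B_1\xrightarrow{f_1}\Gr_{n+1})$ sends $[Y]$ to a class on which $s$ evaluates to $0$, because $H^{n+1}(X\hookrightarrow Y)\to H^{n+1}(\text{pt}\to B_1)$... more simply, the map factors and $s$ restricted to $B_1$ as an absolute class... — in any case this is the routine direction.

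For the converse, "$\varkappa_R(X,g_X)=0 \Rightarrow [X,g_X]^R=0$", I would argue as follows. Since $X$ bounds (as an unoriented manifold, indeed as an $R$-manifold it bounds by hypothesis of being "bounding"), choose any null-cobordism $Y\subseteq\R^{q+1}_-$ for $X$. The obstruction to extending the $R$-structure $g_X$ to a $(B_1,f_1)$-structure on $Y$ lies in $H^{n+1}(Y, X; \pi_n(K(\Z/2,n))) = H^{n+1}(Y,X) = H^{n+1}(X\hookrightarrow Y)$ — this is the same obstruction-theory computation as in Remark~\ref{exi}, using the fibration $K(\Z/2,n)\to B_1 \to \Gr_{n+1}(\R^{q+1})$ pulled back to $Y$, noting that $g_X$ already gives a section over $X$ and $\tau_Y$ gives the base map. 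I would identify this obstruction class with an element $\omega \in H^{n+1}(X\hookrightarrow Y)$ and show that its pairing with $[Y]\in H_{n+1}(X\hookrightarrow Y)$ equals exactly $\varkappa_R(X,g_X)$. The key point is that the obstruction to lifting through $B_1 = \hofib(c_1)$ is measured precisely by the relative class $s$: a lift $Y \to B_1$ extending $g_X|_X$ exists iff the relative class $(g_X,\tau_Y)^*(s) = (g_X,\tau_Y)^*\circ(j,\id)^*(s)$... here I need to be careful about which cofiber, but morally the lift exists iff $(g_X,\tau_Y)^*(r) = 0$ in $H^{n+1}(X\hookrightarrow Y)$, which — since $H_{n+1}(X\hookrightarrow Y)\cong\Z/2$ generated by $[Y]$ and the pairing is perfect — is equivalent to $\langle (g_X,\tau_Y)^*(r),[Y]\rangle = \varkappa_R(X,g_X) = 0$.

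More precisely, the main structural input is: $H^{n+1}(X\hookrightarrow Y)$ — which by excision and Poincaré–Lefschetz duality is $H^{n+1}(Y,X)\cong H_0(Y)\cong \Z/2$ when $Y$ is connected (and a direct sum over components in general) — so the evaluation pairing against $[Y]$ is an isomorphism $H^{n+1}(X\hookrightarrow Y)\xrightarrow{\sim}\Z/2$ onto each connected component's contribution. Hence $(g_X,\tau_Y)^*(r) = 0$ in this group if and only if $\varkappa_R(X,g_X)=\langle(g_X,\tau_Y)^*(r),[Y]\rangle=0$ (after possibly modifying $Y$ to be connected, e.g. by tubing components together, which does not change $\varkappa_R$ by Lemma~\ref{ind} and additivity). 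When this obstruction vanishes we obtain a $(B_1,f_1)$-structure $g_Y$ on $Y$ with $g_Y|_X = j\circ g_X$, exhibiting $[X,g_X]^R = 0$. The main obstacle — the step deserving the most care — is the precise identification of the lifting obstruction for $B_1=\hofib(c_1)$ with the cohomology class $(g_X,\tau_Y)^*(r)\in H^{n+1}(X\hookrightarrow Y)$: one must check that the primary (and only) obstruction to extending a section of a pulled-back path–loop fibration over a relative CW pair of dimension $n+1$ is exactly the relative characteristic class built from $s=(p,c_1)^*(\tilde\iota)$ and its pullback $r$, and that no higher obstructions or indeterminacy interfere because $\dim Y = n+1$ and $K(\Z/2,n)$ has homotopy concentrated in degree $n$. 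This is essentially classical obstruction theory for principal-like fibrations, but writing it cleanly in the relative (cofiber) cohomology language of the paper requires attention.
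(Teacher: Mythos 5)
Your proposal is correct and follows essentially the same route as the paper: the forward direction factors the pair map $(g_X,\tau_Y)$ through $(B_1\xrightarrow{f_1}\Gr_{n+1}(\R^{q+1}))$ using the null-cobordism's structure $g_Y$ (the paper's clean formulation of the vanishing you are groping for is simply that the factorization passes through $H^{n+1}(B_1\xrightarrow{\id}B_1)=0$), and the converse identifies the obstruction to extending $g_X$ to a $(B_1,f_1)$-structure on a connected null-cobordism $Y$ with $(g_X,\tau_Y)^*(r)\in H^{n+1}(X\hookrightarrow Y)$, which vanishes iff its pairing with the generator $[Y]$ does. The only cosmetic differences are that the paper phrases the obstruction via lifting $c_1\circ\tau_Y$ through the path fibration $PK(\Z/2,n+1)\to K(\Z/2,n+1)$ rather than via the fiber $K(\Z/2,n)$, and it arranges connectedness of $Y$ directly (deferring $n=0$ to a separate example) instead of tubing components together.
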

\begin{proof}
Suppose $[X,g_X]^R=0$. Then there is a $(B_1,f_1)$-null-cobordism $(Y,g_Y)$:
\[\begin{tikzcd}
	B &&& {B_1} \\
	& X & Y \\
	{\Gr_n(\R^q)} &&& {\Gr_{n+1}(\R^{q+1}).}
	\arrow["j", from=1-1, to=1-4]
	\arrow["f"', from=1-1, to=3-1]
	\arrow["{f_1}", from=1-4, to=3-4]
	\arrow["{g_{X}}"', from=2-2, to=1-1]
	\arrow[hook, from=2-2, to=2-3]
	\arrow["{\tau_X}"', from=2-2, to=3-1]
	\arrow["{g_Y}", from=2-3, to=1-4]
	\arrow["{\tau_Y}", from=2-3, to=3-4]
	\arrow["i", from=3-1, to=3-4]
\end{tikzcd}\]
We have  $\varkappa_R(X,g_X) = \langle(g_X,\tau_Y)^*(r),[Y]\rangle$.

There is the commutative square 
\[\begin{tikzcd}
	{H^{n+1}(B_1\xrightarrow{f_1} \Gr_{n+1}(\R^{q+1}))} && {H^{n+1}(B_1\xrightarrow{\id} B_1)} \\
	{H^{n+1}(B\xrightarrow{i\circ f} \Gr_{n+1}(\R^{q+1}))} && {H^{n+1}(X\hookrightarrow Y).}
	\arrow["{(\id,f_1)^*}", from=1-1, to=1-3]
	\arrow["{(j,\id)^*}"', from=1-1, to=2-1]
	\arrow["{(j\circ g_X,g_Y)^*}", from=1-3, to=2-3]
	\arrow["{(g_X,\tau_Y)^*}", from=2-1, to=2-3]
\end{tikzcd}\]
 Note that \(H^{n+1}(B_1\xrightarrow{\id}B_1)=0\). Then we have $$\langle(g_X,\tau_Y)^*(r),[Y]\rangle\overset{\eqref{eq:def_of_can_r}}{=}\langle(g_X,\tau_Y)^{*}\circ (j,\id)^{*}(s),[Y]\rangle=0.$$


Conversely, suppose $\varkappa_R(X,g_X)=0$. Choose a connected\footnote{This is only possible if $n \ne 0$. For zero-dimensional manifolds, this will be considered separately in the example \ref{ex}.} null-cobordism $Y$ for $X$. We need to find a map $g_Y\colon Y\to B_1$  such that $\tau_Y=f_1\circ g_Y$ and $g_Y|_{X}=j \circ g_X$. Since $B_1=\mathrm{hofib}(c_1)$, the existence of such a map is equivalent to the existence of the map $h_Y\colon Y\to PK(\Z/2,n+1)$ such that the following diagram commutes:
\[\begin{tikzcd}
	X && {PK(\Z/2,n+1)} \\
	Y && {K(\Z/2,n+1).}
	\arrow["{p\circ j \circ g_X}", from=1-1, to=1-3]
	\arrow[hook, from=1-1, to=2-1]
	\arrow[from=1-3, to=2-3]
	\arrow["{h_Y}", dashed, from=2-1, to=1-3]
	\arrow["{c_1\circ\tau_Y}", from=2-1, to=2-3]
\end{tikzcd}\]

The obstruction to the existence of $h_Y$ is the class $(p\circ j\circ g_X, c_1\circ \tau_Y)^*(\tilde\iota)$.
Note that
$$(p\circ j\circ g_X, c_1\circ \tau_Y)^*(\tilde\iota)= (j\circ g_X,\tau_Y)^*(s) = \left((g_X,\tau_Y)^* \circ (j,\id)^*\right)(s) =(g_X,\tau_Y)^*(r).$$
$$(\tilde\iota) = (j\circ g_X,\tau_Y)^*(s) = \left((g_X,\tau_Y)^* \circ (j,\id)^*\right)(s) =(g_X,\tau_Y)^*(r).$$
By assumption,
$$0=\varkappa_R(X,g_X)=\langle(g_X,\tau_Y)^*(r),[Y]\rangle.$$
Since $[Y]$ is the generator of $H_{n+1}(X\hookrightarrow Y)$, we have $(g_X,\tau_Y)^*(r)=0$. Hence, there is some $(B_1,f_1)$-structure $g_Y$ on the null-cobordism $Y$ compatible with $g_X$. So we have $[X,g_X]^R=0$.
\end{proof}

\begin{proposition}\label{not0}
    For a bounding nonempty manifold \(X\) there is an \(R\)-structure \(g_X\) such that \(\varkappa_R(X,g_X)\neq 0\).
\end{proposition}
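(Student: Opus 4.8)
The plan is to fix one $R$-structure on $X$ and then perturb it, keeping track of the effect on $\varkappa_R$. By Remark~\ref{exi} the manifold $X$ admits \emph{some} $R$-structure $g_X$; fix such a $g_X$ together with a null-cobordism $Y$ for $X$ (which exists since $X$ bounds). For the fixed submanifold $X$, the set of $R$-structures on it is a torsor over $H^n(X) = [X, K(\Z/2,n)]$ via the fibrewise action of $K(\Z/2,n)$ on $B = \hofib(c)$; for $a \in H^n(X)$ write $g_X^a$ for the $R$-structure obtained from $g_X$ by twisting by $a$. The heart of the argument is the identity
\[\varkappa_R(X, g_X^a) = \varkappa_R(X, g_X) + \langle a, [X] \rangle \qquad (a \in H^n(X)).\]

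To prove it, recall from the construction of the canonical relative lift that $(g_X, \tau_Y)^*(r) = (p \circ j \circ g_X,\, c_1 \circ \tau_Y)^*(\tilde\iota)$, and that, as was observed in the proof of Lemma~\ref{lem:kernel_characterization}, this class is precisely the obstruction to extending the partial lift $p \circ j \circ g_X \colon X \to PK(\Z/2,n+1)$ of $c_1 \circ \tau_Y$ over all of $Y$. Since the fibre of the path fibration $PK(\Z/2,n+1) \to K(\Z/2,n+1)$ is $\Omega K(\Z/2,n+1) \simeq K(\Z/2,n)$, which has a single nonvanishing homotopy group, this is the only obstruction, and the classical difference-cochain formula of obstruction theory applies: two lifts over $X$ that differ by $a \in H^n(X) = [X, \Omega K(\Z/2,n+1)]$ have obstructions over $(Y,X)$ that differ by $\delta(a) \in H^{n+1}(X \hookrightarrow Y)$. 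The maps $j \colon B \to B_1$ and $p \colon B_1 \to PK(\Z/2,n+1)$ restrict to homotopy equivalences on fibres and intertwine the respective $K(\Z/2,n)$-actions, so twisting $g_X$ by $a$ twists $p \circ j \circ g_X$ by the class corresponding to $a$ under the canonical identification of the various copies of $H^n(X)$. Hence $(g_X^a, \tau_Y)^*(r) = (g_X, \tau_Y)^*(r) + \delta(a)$, and pairing with $[Y]$ together with $\langle \delta a, [Y] \rangle = \langle a, \partial[Y] \rangle = \langle a, [X] \rangle$ gives the displayed identity.

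To finish: since $X$ is nonempty and closed, its $\Z/2$-fundamental class $[X] \in H_n(X)$ is nonzero, so by non-degeneracy of the pairing $H^n(X) \otimes H_n(X) \to \Z/2$ there is a class $a$ with $\langle a, [X] \rangle = 1$. Then $\varkappa_R(X, g_X^a) = \varkappa_R(X, g_X) + 1$, so at least one of the two $R$-structures $g_X$ and $g_X^a$ has nonzero secondary Stiefel--Whitney number, which is what we want. Note that neither connectedness of $Y$ nor any restriction on $n$ is needed.

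The step I expect to cost the most work is the difference-cochain identity $(g_X^a, \tau_Y)^*(r) = (g_X, \tau_Y)^*(r) + \delta(a)$: one must carefully carry the $K(\Z/2,n)$-torsor structure of $R$-structures through the maps $j$ and $p$ so that a twist by $a$ on the $B$-side really becomes a twist by $a$ (under the canonical identification of the copies of $H^n(X)$) on the $PK(\Z/2,n+1)$-side, and then invoke the standard computation of how the primary obstruction of the path fibration changes under a boundary modification of the partial lift. If one prefers to avoid citing obstruction theory, the same identity can instead be extracted from the morphism of long exact sequences induced by $(g_X, \tau_Y)$ and $(g_X^a, \tau_Y)$, but the bookkeeping of the torsor action is unavoidable in either route.
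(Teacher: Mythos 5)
Your proof is correct, but it takes a genuinely different route from the paper's. The paper argues at the level of relative cohomology classes: it observes that an $R$-structure $g_X$ on $X$ is the same datum as a map $p \colon X \to PK(\Z/2,n+1)$ lifting $c_1 \circ \tau_Y|_X$, and then invokes Lemma~\ref{lem:choice_of_lift} to realize by such a $p$ a class $x \in H^{n+1}(X \hookrightarrow Y)$ with $x|_Y$ prescribed and $\langle x, [Y]\rangle = 1$; the existence of such an $x$ rests on the same two facts you use, namely that the set of admissible classes is a coset of $\delta(H^n(X))$ and that $\langle \delta a, [Y]\rangle = \langle a, [X]\rangle$ can be made nonzero for nonempty $X$ (as already computed in the proof of Lemma~\ref{ind}). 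You instead work at the level of structures: fix any $g_X$ by Remark~\ref{exi}, twist by $a \in H^n(X)$ via the principal $K(\Z/2,n)$-action on $B = \hofib(c)$, and prove the difference formula $\varkappa_R(X, g_X^a) = \varkappa_R(X, g_X) + \langle a, [X]\rangle$. Your route requires the difference-cochain formula of obstruction theory (or the equivalent torsor bookkeeping you flag as the main cost), which the paper sidesteps by delegating the realization step to Lemma~\ref{lem:choice_of_lift}; in exchange you obtain the more informative conclusion that $\varkappa_R$ takes both values of $\Z/2$ on the set of $R$-structures of a fixed nonempty bounding $X$, with the twisting action permuting them transitively. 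Both arguments use the nonemptiness of $X$ in exactly the same place, and neither needs $Y$ connected.
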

\begin{proof}
    Choose null-cobordism \(Y\) for \(X\) and consider the following diagram 
\[\begin{tikzcd}
	X & B & {PK(\Z/2,n+1)} \\
	Y & {\Gr_{n+1}(\R^{q+1})} & {K(\Z/2,n+1).}
	\arrow["{g_X}", dashed, from=1-1, to=1-2]
	\arrow["{p}", curve={height=-18pt}, dashed, from=1-1, to=1-3]
	\arrow[hook, from=1-1, to=2-1]
	\arrow[from=1-2, to=1-3]
	\arrow["{i\circ f}"', from=1-2, to=2-2]
	\arrow[from=1-3, to=2-3]
	\arrow["{\tau_Y}", from=2-1, to=2-2]
	\arrow["c_1", from=2-2, to=2-3]
\end{tikzcd}\]
The existence of a declared \(R\)-structure \(g_X\) is equivalent to the existence of map \(p\) such that outer diagram commutes and \((p,c_1\circ\tau_Y)^*(\iota)\neq 0\in H^{n+1}(X\hookrightarrow Y)\). According to \ref{lem:choice_of_lift} such a map \(p\) exists.
\end{proof}

\begin{corollary}\label{cor:cobordism_group_descr}
There is a short exact sequence
\begin{equation}\label{eq:cobordism_ses}
0 \to \mathrm{ker}(U) \to \Omega^R_n \xrightarrow{U} \Omega^O_n \to 0,
\end{equation}
and $\varkappa_R\colon\mathrm{ker}(U)\to \Z/2$ is an isomorphism.
\end{corollary}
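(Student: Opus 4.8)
The plan is to assemble the short exact sequence from the pieces already in place and then use the preceding lemmas to identify $\ker(U)$. First I would recall that $U\colon \Omega^R_n \to \Omega^O_n$ is surjective: this is immediate from Remark~\ref{exi}, since every closed $n$-manifold $X\subset\R^q$ admits \emph{some} $R$-structure $g_X$, and then $U([X,g_X]^R)=[X]^O$ realizes an arbitrary unoriented cobordism class. Exactness at $\Omega^R_n$ is the definition of the kernel, and exactness at $\ker(U)$ is trivial, so the only content of \eqref{eq:cobordism_ses} beyond surjectivity is the identification of $\ker(U)$.

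Next I would analyze $\ker(U)$ directly. An element $[X,g_X]^R$ lies in $\ker(U)$ precisely when $[X]^O=0$, i.e.\ when $X$ is a bounding manifold; so $\ker(U)$ is exactly the set of $(B_1,f_1)$-cobordism classes represented by bounding $R$-manifolds. For such classes the secondary Stiefel--Whitney number $\varkappa_R$ is defined, and it is additive by the Remark following Definition~\ref{def:R_cobordism_group}, hence gives a well-defined homomorphism $\varkappa_R\colon\ker(U)\to\Z/2$ (well-definedness on cobordism classes follows because a $(B_1,f_1)$-cobordism between two bounding $R$-manifolds exhibits their difference as $(B_1,f_1)$-null-cobordant, so by Lemma~\ref{lem:kernel_characterization} the difference of their $\varkappa_R$ values vanishes; alternatively invoke that $\varkappa_R$ is a $(B_1,f_1)$-cobordism invariant, which is essentially the ``only if'' direction of Lemma~\ref{lem:kernel_characterization}). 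Injectivity of $\varkappa_R$ on $\ker(U)$ is exactly Lemma~\ref{lem:kernel_characterization}: if $\varkappa_R(X,g_X)=0$ then $[X,g_X]^R=0$. Surjectivity of $\varkappa_R\colon\ker(U)\to\Z/2$ follows from Proposition~\ref{not0}: pick any nonempty bounding manifold (e.g.\ a sphere $S^n$, or in dimension $0$ the two-point set), which by Proposition~\ref{not0} carries an $R$-structure $g_X$ with $\varkappa_R(X,g_X)\ne 0$, so $1\in\Z/2$ is attained; of course $[X,g_X]^R\in\ker(U)$ since $X$ bounds.

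Combining these, $\varkappa_R$ is an isomorphism $\ker(U)\xrightarrow{\ \sim\ }\Z/2$, and together with surjectivity of $U$ this yields the short exact sequence \eqref{eq:cobordism_ses}.

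The one step requiring a little care is the well-definedness and additivity of $\varkappa_R$ as a map out of the \emph{quotient} $\ker(U)$ rather than out of the set of bounding $R$-manifolds: one must check that $(B_1,f_1)$-cobordant bounding $R$-manifolds have equal $\varkappa_R$, and that disjoint union corresponds to addition in $\Omega^R_n$. Both are handled by the cited Remark (additivity under disjoint union) and by Lemma~\ref{lem:kernel_characterization} applied to the difference $X\sqcup X'$ of two $(B_1,f_1)$-cobordant representatives — the main obstacle is purely bookkeeping, namely making sure the orthogonality/collar conventions let one glue a $(B_1,f_1)$-cobordism $X\sqcup X'$ to a null-cobordism so as to produce a null-cobordism of the disjoint union, which is routine given the setup in Section~\ref{sec:preliminaries}. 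I do not anticipate any genuinely new difficulty here; the corollary is a formal consequence of Lemmas~\ref{lem:kernel_characterization}, Remark~\ref{exi}, Proposition~\ref{not0}, and the additivity remark.
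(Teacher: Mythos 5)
Your proposal is correct and follows essentially the same route as the paper: surjectivity of $U$ from Remark~\ref{exi}, injectivity of $\varkappa_R$ on $\ker(U)$ from Lemma~\ref{lem:kernel_characterization}, and surjectivity of $\varkappa_R$ from Proposition~\ref{not0}. The only difference is that you spell out the well-definedness and additivity of $\varkappa_R$ as a homomorphism on the quotient $\ker(U)$, which the paper leaves implicit; this is a reasonable piece of extra care but not a different argument.
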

\begin{proof}
According to Remark~\ref{exi}, the homomorphism $U$ is surjective. By Lemma~\ref{lem:kernel_characterization}, \(\varkappa_R\) is injective. By Proposition~\ref{not0}, it is surjective as well.
\end{proof}

\begin{example}
Consider the relation $w_1^2+w_2\in\mathcal{R}_2$ and the corresponding group $\Omega_1^{w_1^2+w_2}$.
\begin{itemize}
\item According to \cite{Kir}, the obstruction to the existence of the $\mathrm{Pin^-}$-structure is exactly the class $w_{1}^2 + w_2$. So,  $\Omega_1^{w_1^2+w_2}\cong \Omega^{\mathrm{Pin}^-}_1$.
\item Since \(\Omega^O_1 = 0\), we have \(\ker(U) = \Omega^{w_1^2 + w_2}\). We conclude that $\varkappa_{w_{1}^2 + w_2} \colon \Omega^{\mathrm{Pin}^-}_1 \to \Z/2$ is an isomorphism by the previous corollary.
\item Since $w_1 = v_1$, by Theorem~\ref{th:our_is_kervaire}, the orientation \(\tilde g_X\) on the 1-dimensional manifold $X$ gives rise to a $\mathrm{Pin}^-$-structure $g_X$ such that $\varkappa_{w_{1}^2 + w_2}(X,g_X) = K(X)$.
\end{itemize}
\end{example}
\begin{example}
Consider the relation $w_4+v_2^2\in\mathcal{R}_4$ and the corresponding group $\Omega^{w_4+v_2^2}_3$.
\begin{itemize}
\item Since $\Omega_3^O\cong 0$, we have $\ker(U) = \Omega^{w_4 + v_2^2}_3$. As before, this implies that there is an isomorphism \(\varkappa_{w_4 + v_2^2} \colon \Omega_3^{w_4 + v_2^2} \to \Z/2\).
\item Since $v_2=w_2+w_1^2$, according to \cite{Kir} existence of $(\tilde{B},\tilde{f})$-structure is equivalent to existence of $\mathrm{Pin}^-$-structure. Again, according to Theorem~\ref{th:our_is_kervaire} for the 3-dimensional $\mathrm{Pin}^-$-manifold $(X,\tilde{g}_X)$ we have a corresponding $(w_4+v_2^2)$-structure $g_X$ such that $\varkappa_{w_4 + v_2^2}(X,g_X) = K(X)$.
\end{itemize}
\end{example}
 
\begin{example}\label{ex}
Consider $w_{n+1}\in\mathcal{R}_{n+1}$ for even $n$. For a bounding $w_{n+1}$-manifold $(X,g_X)$ and null-cobordism $Y$ for $X$ there is a sequence of equalities
$$\varkappa_{w_{n+1}}(X,g_X) =\chi(Y)\bmod 2=\tfrac{1}{2}\chi(X)\bmod2.$$
Consider $(\R P^n,g_{\R P^n})$ with any $w_{n+1}$-structure $g_{\R P^n}\colon\R P^n\to B$. Since $$\varkappa_{w_{n+1}}(2\cdot [\R P^n,g_{\R P^n}]^{w_{n+1}})=\chi(\R P^n)\bmod 2=1$$ we have $2 \cdot [\R P^n,g_{\R P^n}]^{w_{n+1}} \neq 0\in\Omega_n^{w_{n+1}}$. The exact sequence \eqref{eq:cobordism_ses} does not split, and there is the homomorphism $\chi\bmod 4\colon\Omega_n^{w_{n+1}}\to \Z/4 $. 

For the case $n=0$ we have $\mathcal{R}_{1} =\{0, w_1\}$, and every \(0\)-dimensional manifold is a disjoint union of some copies of \(\mathbb{R}P^0 \simeq *\). Thus, the calculations above are still valid, and we may conclude that $\Omega_0^{w_1}=\Z/4$. 
\end{example}
\section{Geometric constructions}
\label{sec:geometric_constructions}
\label{sec:coordinate_approach}
\subsection{Explicit model for (co)chains}
For every topological space \(X\) there are singular chains \(C_*(X) = \Z/2[\mathrm{Hom}_{\mathrm{Top}}(\Delta^*, X)]\) and cochains \(C^*(X) = \mathrm{Hom}_{\mathrm{Ab}}(C_*(X), \Z/2)\).

Consider a morphism of complexes \(f \colon C_* \to D_*\), as a bicomplex, where \(D_i\) is placed in a bidegree \((i,0)\) and \(C_i\) is placed in bidegree \((i,1)\), respectively. Then there is a total complex whose \(i\)-chains are defined as \(\mathrm{Tot}(f)_i = D_i \oplus C_{i-1}\). Analogously, for a morphism of cochain complexes \(f \colon C^* \to D^*\), there is a totalization \(\mathrm{Tot}(f)^i = C^i \oplus D^{i-1}\).

If \(f \colon X \to Y\) is a continuous map, then there are induced morphisms \(f_* \colon C_*(X) \to C_*(Y)\) and \(f^*\colon C^*(Y) \to C^*(X)\). We can define relative chain and cochain complexes as
\begin{align*}
C_*(X \xrightarrow{f} Y) &\defeq \mathrm{Tot}(f_*), \\
C^*(X \xrightarrow{f} Y) &\defeq \mathrm{Tot}(f^*).
\end{align*}
We will denote their elements as
\[c = (c_{(0)}, c_{(1)}) \in C_i(X \xrightarrow{f} Y) = C_i(Y) \oplus C_{i-1}(X)\]
and
\[u = (u^{(0)}, u^{(1)}) \in C^i(X \xrightarrow{f} Y) = C^i(Y) \oplus C^{i-1}(X).\]
One has \(c \in Z_*(X \xrightarrow{f} Y)\) iff \(c_{(1)} \in Z_*(X)\) and \(f_*(c_{(1)}) = \partial_Y(c_{(0)})\); and \(u \in Z^*(X \xrightarrow{f} Y)\) iff \(u^{(0)} \in Z^*(Y)\) and \(f^*(u^{(0)}) = \delta_X(u^{(1)})\).

It is easy to see that there are canonical isomorphisms
\begin{align*}
H_k(C_*(X \to Y)) &= H_k(X \to Y), \\
H^k(C^*(X \to Y)) &= H^k(X \to Y).
\end{align*}

The long exact sequence of cohomology groups arises as the long exact sequence associated with the short exact sequence of cochain complexes
\[0 \to C^{*-1}(X) \to C^*(X \to Y) \to C^*(Y) \to 0.\]

We won't stop at this. For a commutative square of spaces
\[\begin{tikzcd}[cramped]
X & Y \\
{X'} & {Y'}
\arrow["f", from=1-1, to=1-2]
\arrow["a"', from=1-1, to=2-1]
\arrow["b", from=1-2, to=2-2]
\arrow["{f'}"', from=2-1, to=2-2]
\end{tikzcd}\]
we define its complex of cochains as
\[C^*\left(\begin{tikzcd}[cramped,sep=tiny,font=\scriptsize]
         X & Y \\
	X' & Y'
	\arrow[from=1-1, to=1-2]
	\arrow[from=1-1, to=2-1]
	\arrow[from=1-2, to=2-2]
	\arrow[from=2-1, to=2-2]
\end{tikzcd}\right) \defeq \mathrm{Tot}(C^*(X' \to Y') \to C^*(X \to Y))\]
and
\[H^*\left(\begin{tikzcd}[cramped,sep=tiny,font=\scriptsize]
         X & Y \\
	X' & Y'
	\arrow[from=1-1, to=1-2]
	\arrow[from=1-1, to=2-1]
	\arrow[from=1-2, to=2-2]
	\arrow[from=2-1, to=2-2]
\end{tikzcd}\right) \defeq H^*\left(C^*\left(\begin{tikzcd}[cramped,sep=tiny,font=\scriptsize]
         X & Y \\
	X' & Y'
	\arrow[from=1-1, to=1-2]
	\arrow[from=1-1, to=2-1]
	\arrow[from=1-2, to=2-2]
	\arrow[from=2-1, to=2-2]
\end{tikzcd}\right)\right).\]
Observe that
\[C^i\left(\begin{tikzcd}[cramped,sep=tiny,font=\scriptsize]
         X & Y \\
	X' & Y'
	\arrow[from=1-1, to=1-2]
	\arrow[from=1-1, to=2-1]
	\arrow[from=1-2, to=2-2]
	\arrow[from=2-1, to=2-2]
\end{tikzcd}\right) = C^i(Y') \oplus C^{i-1}(X') \oplus C^{i-1}(Y) \oplus C^{i-2}(X).\]
There is an associated long exact sequence
\[\dots \to H^{*-1}(X \to Y) \xrightarrow{\delta} H^*\left(\begin{tikzcd}[cramped,sep=tiny,font=\scriptsize]
         X & Y \\
	X' & Y'
	\arrow[from=1-1, to=1-2]
	\arrow[from=1-1, to=2-1]
	\arrow[from=1-2, to=2-2]
	\arrow[from=2-1, to=2-2]
\end{tikzcd}\right) \xrightarrow{|_{f'}} H^*(X' \to Y') \xrightarrow{(a,b)^*} H^*(X \to Y) \to \dots.\]

There is a multiplication map
\[\cdot\colon C^i(X'\xrightarrow{f'} Y')\otimes C^j(Y\xrightarrow{b} Y')\to C^*\left(\begin{tikzcd}[cramped,sep=tiny,font=\scriptsize]
    X & Y \\
    X' & Y'
	\arrow[from=1-1, to=1-2]
	\arrow[from=1-1, to=2-1]
	\arrow[from=1-2, to=2-2]
	\arrow[from=2-1, to=2-2]
\end{tikzcd}\right)\] defined by the equality
$$(u^{(0)},u^{(1)})\cdot(v^{(0)},v^{(1)})=(u^{(0)}v^{(0)},u^{(1)}f'^*(v^{(0)}), b^*(u^{(0)})v^{(1)}, a^*(u^{(1)})f^*(v^{(1)})).$$
The multiplication map is a morphism of chain complexes, hence it is defined on cohomology.

The relative pairing realized on the level of chains via the formula
\begin{align*}
\langle -, - \rangle \colon C^i(X \to Y) \otimes C_i(X \to Y) \to \Z/2, \quad 
\langle u, c \rangle = \langle u^{(0)}, c_{(0)} \rangle_Y + \langle u^{(1)}, c_{(1)} \rangle_X,
\end{align*}
where \(\langle -, - \rangle_X\) and \(\langle -, - \rangle_Y\) denote cochain-chain pairings on \(X\) and \(Y\), respectively.

\subsection{Thom space}

It is convenient to consider the sphere \(S^q\) as the one-point compactification $\R^q \cup \{*\}$. Let \(D^{q+1} \defeq \R^{q+1}_- \cup \{*\}\) be a model for a closed disk. Then there is a canonical inclusion \(S^q \hookrightarrow D^{q+1}\).

For a Euclidean vector bundle \(\xi\), denote by \(D\xi\) and \(S\xi\) the corresponding \textit{disk} and \textit{sphere bundles}, respectively.
The \textit{Thom space} of a bundle $\xi$ is defined as the quotient space $\Th \xi \defeq D\xi/S\xi$. It has a canonical basepoint \(* = \{S\xi\}\). For a continuous map \(f \colon Y \to X\) and a vector bundle \(\xi\) over \(X\), there is an induced map \(\Th(f) \colon \Th f^*\xi \to \Th\xi\).

Let \(\xi\) be a vector bundle of rank \(k\).
We denote by \(\mu(\xi) \in \widetilde H^k(\Th\xi)\) the \textit{Thom class} of \(\xi\), and by \(\varphi \colon H^*(X) \to \widetilde H^{* + k}(\Th\xi)\) the Thom isomorphism.
Abusing the notation, we will denote the image of \(\mu(\xi)\) under the composition
\[\widetilde H^k(\Th\xi) = H^k(* \to \Th\xi) \xrightarrow[\text{excision}]{\simeq} H^k(S\xi \to D\xi)\]
by the same symbol.

\subsection{Pontryagin--Thom collapse map}
\label{sec:pontryagin-thom_collapse}
Let \(\xi\) be a subbundle of the trivial bundle, then we denote its orthogonal complement by \(\xi^\perp\). Two notable examples are the normal bundle \(\nu_X = (TX)^\perp\) of the submanifold \(X\subset\R^q\) of rank \(q - n\) and \(\bar \gamma_n^q = {(\gamma_n^q)}^\perp\) of rank \(q - n\), where \(\gamma_n^q\) is a tautological bundle over \(\Gr_n(\R^q)\).

If \(X\) is an \(n\)-dimensional submanifold of \(\R^q\), then there is an open tubular neighborhood \(U(X)\), and there is a quotient map
\[\mathrm{collapse} \colon S^q = \R^q \cup \{*\} \to \R^q / (\R^q \setminus U(X)) = \Th\nu_X.\]

If \(X\) is a \((B, f)\)-manifold, then there is a commutative diagram
\[\begin{tikzcd}[cramped]
	& B & {f^* \bar\gamma_n^q} \\
	X & {\Gr_n(\R^q)} & {\bar\gamma_n^q.} \\
	& {\nu_X}
	\arrow["f", from=1-2, to=2-2]
	\arrow[from=1-3, to=1-2]
	\arrow["\lrcorner"{anchor=center, pos=0.125, rotate=-90}, draw=none, from=1-3, to=2-2]
	\arrow[from=1-3, to=2-3]
	\arrow["{{g_X}}", from=2-1, to=1-2]
	\arrow["{{\tau_X}}", from=2-1, to=2-2]
	\arrow[from=2-3, to=2-2]
	\arrow[from=3-2, to=2-1]
	\arrow["\lrcorner"{anchor=center, pos=0.125, rotate=135}, draw=none, from=3-2, to=2-2]
	\arrow[from=3-2, to=2-3]
\end{tikzcd}\]
It defines a morphism of vector bundles \(\nu_X \to f^* \bar \gamma_n^q\) that induces a map \(\Th \nu_X \to \Th f^* \bar \gamma_n^q\).
Then we can define a \textit{\((B,f)\)-collapse map} as the composition
\[\mathfrak{a} \colon S^q \xrightarrow{\mathrm{collapse}} \Th \nu_X \to \Th f^* \bar \gamma_n^q.\]

If \(X \subset \R^q\) bounds \(Y \subset \R^{q+1}_-\), then we can define a relative version of the collapse map. Let \(U(Y)\) be an open tubular neighborhood. Then there is a quotient
\[\mathrm{collapse} \colon D^{q+1} = \R^{q+1}_- \cup \{*\} \to \R^{q+1}_- / U(Y) = \Th \nu_Y.\]

We can define the relative collapse map as the composition
\[\mathfrak{b} \colon D^{q+1} \xrightarrow{\mathrm{collapse}} \Th \nu_Y \to \Th \bar \gamma^{q+1}_{n+1}.\]
There is a pullback diagram:
\[\begin{tikzcd}
	{\Gr_n(\R^q)} & {\bar\gamma_n^q} \\
	{\Gr_{n+1}(\R^{q+1})} & {\bar\gamma_{n+1}^{q+1}.}
	\arrow["i"', from=1-1, to=2-1]
	\arrow[from=1-2, to=1-1]
	\arrow["\lrcorner"{anchor=center, pos=0.125, rotate=-90}, draw=none, from=1-2, to=2-1]
	\arrow[from=1-2, to=2-2]
	\arrow[from=2-2, to=2-1]
\end{tikzcd}\]
If the tubular neighborhoods \(U(X)\) and \(U(Y)\) are chosen compatibly\footnote{See \cite{Pod1} for an explicit construction.} there is the commutative square
\begin{equation}\label{a_b_commutativity}
\begin{tikzcd}[cramped,sep=2.25em]
	{S^q} & {D^{q+1}} \\
	{\Th f^*\bar\gamma_{n}^{q}} & {\Th\bar\gamma_{n+1}^{q+1}.}
	\arrow[from=1-1, to=1-2]
	\arrow["{{\mathfrak{a}}}"', from=1-1, to=2-1]
	\arrow["{{\mathfrak{b}}}", from=1-2, to=2-2]
	\arrow["{{\Th(i\circ f)}}"', from=2-1, to=2-2]
\end{tikzcd}
\end{equation}

\subsection{Relative Thom isomorphism}
\label{sec:relative_thom}
Let \(f \colon X \to Y\) be a continuous map, and \(\xi\) be a vector bundle over \(Y\) of rank \(k\).
Consider the following commutative square:
\[\begin{tikzcd}[cramped]
	{S f^*\xi} && {D f^*\xi} \\
	& {(A)} \\
	{S\xi} && {D\xi.}
	\arrow[hook, from=1-1, to=1-3]
	\arrow[from=1-1, to=3-1]
	\arrow[from=1-3, to=3-3]
	\arrow[hook, from=3-1, to=3-3]
\end{tikzcd}\]
We define a morphism \(\mu \cdot (-) \colon H^*(X \xrightarrow{f} Y) \to H^{*+k}\left(\fbox{A}\right)\) as the composite of the isomorphism \(H^*(X\to Y) \to H^*(Df^*\xi\to D\xi)\) induced by the natural projections and multiplication by the Thom class \(\mu\in H^k(S\xi\to D\xi)\). 

There is a natural transformation of two commutative squares of spaces:
\[\begin{tikzcd}[cramped]
	{*} && {\Th f^* \xi} && {S f^*\xi} && {D f^*\xi} \\
	& {(B)} &&&& {(A)} \\
	{*} && {\Th \xi} && {S\xi} && {D\xi.}
	\arrow[from=1-1, to=1-3]
	\arrow[equals, from=1-1, to=3-1]
	\arrow[""{name=0, anchor=center, inner sep=0}, "{{\Th(f)}}"', from=1-3, to=3-3]
	\arrow[hook, from=1-5, to=1-7]
	\arrow[""{name=1, anchor=center, inner sep=0}, from=1-5, to=3-5]
	\arrow[from=1-7, to=3-7]
	\arrow[from=3-1, to=3-3]
	\arrow[hook, from=3-5, to=3-7]
	\arrow["\Pi"', between={0.2}{0.8}, Rightarrow, from=1, to=0]
\end{tikzcd}\]
We denote its components as
\begin{align*}
& \Pi_1 \colon (S \xi \to D\xi) \to (* \to \Th \xi), \\
& \Pi_2 \colon (S f^* \xi \to D f^* \xi) \to (* \to \Th f^* \xi).
\end{align*}

It induces a commutative diagram
\[\begin{tikzcd}
	\dots & {H^{*-1+k}(* \to \Th f^* \xi)} & {H^{*+k}\left(\fbox{B}\right)} & {H^{*+k}(* \to T\xi)} & \dots \\
	\dots & {H^{*-1+k}(Sf^*\xi \to D f^*\xi)} & {H^{*+k}\left(\fbox{A}\right)} & {H^{*+k}(S\xi \to D\xi)} & \dots \\
	\dots & {H^{*-1}(X)} & {H^*(X \xrightarrow{f} Y)} & {H^*(Y)} & \dots.
	\arrow[from=1-1, to=1-2]
	\arrow["\delta", from=1-2, to=1-3]
	\arrow["{{{\Pi_2^*}}}", from=1-2, to=2-2]
	\arrow[from=1-3, to=1-4]
	\arrow["{{{\Pi^*}}}", from=1-3, to=2-3]
	\arrow[from=1-4, to=1-5]
	\arrow["{{{\Pi_1^*}}}", from=1-4, to=2-4]
	\arrow[from=2-1, to=2-2]
	\arrow["\delta", from=2-2, to=2-3]
	\arrow[shift left=2, draw=none, from=2-2, to=3-2]
	\arrow[from=2-3, to=2-4]
	\arrow[from=2-4, to=2-5]
	\arrow[from=3-1, to=3-2]
	\arrow["\varphi", from=3-2, to=2-2]
	\arrow["\cong"', from=3-2, to=2-2]
	\arrow["\delta"', from=3-2, to=3-3]
	\arrow["{{\mu \cdot (-)}}"', from=3-3, to=2-3]
	\arrow[from=3-3, to=3-4]
	\arrow["\varphi", from=3-4, to=2-4]
	\arrow["\cong"', from=3-4, to=2-4]
	\arrow[from=3-4, to=3-5]
\end{tikzcd}\]

\begin{itemize}
\item Multiplication by the Thom class \(\mu \cdot (-) \colon H^*(X \to Y) \to H^{*+k}\left(\fbox{A}\right)\) is an isomorphism by the 5-lemma.
\item The homomorphism \(\Pi^* \colon H^{*+k}\left(\fbox{B}\right) \to H^{*+k}\left(\fbox{A}\right)\) is an isomorphism by the 5-lemma.
\item Since \(H^*(* \to *) = 0\), the homomorphism \(|_{\Th(f)} \colon H^{*+k}\left(\fbox{B}\right) \to H^{*+k}(\Th f^* \xi \to \Th\xi)\) is an isomorphism.
\end{itemize}
The composition
\[\Phi \colon H^*(X \to Y) \xrightarrow{\mu \cdot (-)} H^{*+k}\left(\fbox{A}\right) \xrightarrow{(\Pi^*)^{-1}} H^{*+k}\left(\fbox{B}\right) \xrightarrow{|_{\Th(f)}} H^{*+k}(\Th f^*\xi \to \Th \xi)\]
is an isomorphism.
We call this map the \textit{relative Thom isomorphism}. Note that by construction of \(\Phi\) for every class \(x \in H^i(X \to Y)\), one has 
\begin{equation}
\label{thomrel}
\varphi(x|_Y) = \Phi(x)|_{\Th\xi}.
\end{equation}

\[\begin{tikzcd}[cramped,column sep=small]
	{ H^n(B)} & {\widetilde H^q(\Th f^* \bar \gamma_n^q)} && {H^{n+1}(B \xrightarrow{i \circ f} \Gr_{n+1}(\R^{q+1}))} & {H^{q+1}(\Th f^* \bar \gamma_n^q \xrightarrow{\Th(i \circ f)} \Th \bar \gamma_{n+1}^{q+1})} \\
	{H^n(X)} & {\widetilde H^q(S^q)} && {H^{n+1}(X \to Y)} & {H^{q+1}(S^q \to D^{q+1}).}
	\arrow["\varphi", from=1-1, to=1-2]
	\arrow["{g_X^*}"', from=1-1, to=2-1]
	\arrow["{\mathfrak{a}^*}", from=1-2, to=2-2]
	\arrow["\Phi", from=1-4, to=1-5]
	\arrow["{(g_X, \tau_Y)^*}"', from=1-4, to=2-4]
	\arrow["{(\mathfrak{a},\mathfrak{b})^*}", from=1-5, to=2-5]
\end{tikzcd}\]

\begin{lemma}
\label{55}
\leavevmode
\begin{enumerate}
\item For a \((B, f)\)-manifold \((X, g_X)\) and \(k \in H^n(B)\) consider the diagram 
\[\begin{tikzcd}
	{ H^n(B)} & {\widetilde H^q(\Th f^* \bar \gamma_n^q)} \\
	{H^n(X)} & {\widetilde H^q(S^q).}
	\arrow["\varphi", from=1-1, to=1-2]
	\arrow["{{g_X^*}}"', from=1-1, to=2-1]
	\arrow["{{\mathfrak{a}^*}}", from=1-2, to=2-2]
\end{tikzcd}\]
Then we have \(\langle g_X^*(k), [X] \rangle = \langle \mathfrak{a}^*(\varphi(k)), [S^q] \rangle.\)
\item For a bounding \((B, f)\)-manifold \((X, g_X)\) and null-cobordism \(Y\) for \(X\) we have the following diagram
\[\begin{tikzcd}
	&& {H^{n+1}(B \xrightarrow{i \circ f} \Gr_{n+1}(\R^{q+1}))} & {H^{q+1}(\Th f^* \bar \gamma_n^q \xrightarrow{\Th(i \circ f)} \Th \bar \gamma_{n+1}^{q+1})} \\
	{} && {H^{n+1}(X \to Y)} & {H^{q+1}(S^q \to D^{q+1}),}
	\arrow["\Phi", from=1-3, to=1-4]
	\arrow["{{(g_X, \tau_Y)^*}}"', from=1-3, to=2-3]
	\arrow["{{(\mathfrak{a},\mathfrak{b})^*}}", from=1-4, to=2-4]
\end{tikzcd}\]
and
\(\langle (g_X, \tau_Y)^*(k), [Y] \rangle = \langle (\mathfrak{a}, \mathfrak{b})^*(\Phi(k)), [D^{q+1}] \rangle\)
for any \(k \in H^{n+1}(B \xrightarrow{i \circ f} \Gr_{n+1}(\R^{q+1}))\).
\end{enumerate}
\end{lemma}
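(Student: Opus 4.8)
The plan is to reduce both statements to the fundamental identity of Pontryagin--Thom theory — that the collapse map carries the fundamental class of the ambient sphere (resp. disk) to the homology Thom isomorphism applied to the fundamental class of the submanifold — by transporting each pairing along the collapse map and then using naturality of the Thom isomorphism.

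For (1), I would factor \(\mathfrak a\) as \(S^q\xrightarrow{\mathrm{collapse}}\Th\nu_X\to\Th f^*\bar\gamma_n^q\) as in \S\ref{sec:pontryagin-thom_collapse}, where the second map is induced by the bundle morphism \(\nu_X\to f^*\bar\gamma_n^q\) over \(g_X\), and note \(\nu_X=g_X^*(f^*\bar\gamma_n^q)\). By naturality of the cohomology Thom isomorphism this bundle morphism sends \(\varphi(k)\in\widetilde H^q(\Th f^*\bar\gamma_n^q)\) to \(\varphi_{\nu_X}(g_X^*k)\in\widetilde H^q(\Th\nu_X)\), so \(\mathfrak a^*(\varphi(k))=\mathrm{collapse}^*(\varphi_{\nu_X}(g_X^*k))\). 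Pairing with \([S^q]\) and moving \(\mathrm{collapse}^*\) across gives \(\langle\varphi_{\nu_X}(g_X^*k),\mathrm{collapse}_*[S^q]\rangle\). Now I invoke the Pontryagin--Thom identity \(\mu(\nu_X)\frown\mathrm{collapse}_*[S^q]=[X]\) — a local statement, since \([S^q]\) generates \(H_q(S^q,S^q\setminus\{x\})\) at each point of the tubular neighborhood \(U(X)\) and the collapse map is, over a product chart \(D^n\times D^{q-n}\), the projection onto the Thom space of a trivial bundle. Together with the projection formula \(\langle\mu\smile p^*a,z\rangle=\langle a,\mu\frown z\rangle\) this collapses the pairing to \(\langle g_X^*k,[X]\rangle\); equivalently, the cohomology and homology Thom isomorphisms are adjoint under the evaluation pairing, so \(\langle\varphi_{\nu_X}(g_X^*k),\mathrm{collapse}_*[S^q]\rangle=\langle g_X^*k,\mu(\nu_X)\frown\mathrm{collapse}_*[S^q]\rangle=\langle g_X^*k,[X]\rangle\).

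For (2) I would repeat this verbatim relative to the boundary. By the square \eqref{a_b_commutativity}, with compatibly chosen tubular neighborhoods of \(X\subset\R^q\) and \(Y\subset\R^{q+1}_-\) (as in \cite{Pod1}), the pair map \((\mathfrak a,\mathfrak b)\colon(S^q\to D^{q+1})\to(\Th f^*\bar\gamma_n^q\to\Th\bar\gamma_{n+1}^{q+1})\) factors through the relative collapse \((S^q\to D^{q+1})\to(\Th\nu_X\to\Th\nu_Y)\) followed by the map induced by the bundle morphism, the map \(\Th\nu_X\to\Th\nu_Y\) existing because \(Y\) meets \(\R^q\) orthogonally, whence \(\nu_Y|_X\cong\nu_X\) and the relative Thom isomorphism \(\Phi\) of \S\ref{sec:relative_thom} for \(\nu_Y\) applies. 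Naturality of \(\Phi\) gives \((\mathfrak a,\mathfrak b)^*(\Phi(k))=(\text{rel.\ collapse})^*\big(\Phi_{\nu_Y}((g_X,\tau_Y)^*k)\big)\), and the relative Pontryagin--Thom identity — that the relative collapse sends \([D^{q+1}]\in H_{q+1}(S^q\to D^{q+1})\) to the relative homology Thom isomorphism applied to \([Y]\in H_{n+1}(X\hookrightarrow Y)\) — reduces, on the total complexes, to the absolute identity applied separately to the \(D^{q+1}\)/\(Y\) summand and the \(S^q\)/\(X\) summand, matched by the boundary maps \([D^{q+1}]\mapsto[S^q]\), \([Y]\mapsto[X]\). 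Pairing and using the adjunction for \(\Phi\) then yields \(\langle(\mathfrak a,\mathfrak b)^*(\Phi(k)),[D^{q+1}]\rangle=\langle(g_X,\tau_Y)^*k,[Y]\rangle\).

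The step I expect to be the genuine obstacle is the relative bookkeeping: formulating the homology Thom isomorphism, the fundamental-class identity, and the naturality and adjunction statements at the level of the total complexes \(C_*(X\to Y)\) and \(C^*(X\to Y)\) (or on homology via the long exact sequences and \eqref{thomrel}), and checking that \(\nu_Y|_X\cong\nu_X\) together with the compatible tubular neighborhoods really makes \eqref{a_b_commutativity} commute on the nose, so that \((\mathfrak a,\mathfrak b)_*[D^{q+1}]\) is the relative Thom isomorphism applied to \((g_X,\tau_Y)_*[Y]\). Once this is in place, the remaining computation is a diagram chase identical in form to the absolute case, using that \(\Phi\) and its homological counterpart arise from the same five-lemma argument of \S\ref{sec:relative_thom} and are therefore mutually adjoint.
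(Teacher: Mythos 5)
Your argument is correct and is essentially the same as the paper's: the paper proves part (1) by citing Switzer's Lemma~16.43, whose content is exactly the Pontryagin--Thom identity plus naturality and adjointness of the Thom isomorphism that you spell out, and it dismisses part (2) with ``the proof is analogous,'' which is precisely the relative bookkeeping on the total complexes that you describe. You have simply made explicit what the paper leaves to the citation.
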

\begin{proof}
\leavevmode
\begin{enumerate}
\item See \cite[Lemma~16.43]{Swi}.
\item The proof is analogous. \qedhere
\end{enumerate}
\end{proof}

\begin{lemma}\label{56}
Let \(X\) be a topological space, $\xi$, $\xi^\perp$ be vector bundles over $X$ such that \(\xi\oplus\xi^{\perp}\) is trivial and let $\varphi\colon H^*(X) \to \widetilde H^{*+\mathrm{rk} \,\xi^\perp}(\Th\xi^\perp)$ be the Thom isomorphism. Then for any $x\in H^*(X)$ the following equality holds: $$\mathrm{Sq}(\varphi(v(\xi)x))=\varphi(\mathrm{Sq}(x)).$$
\end{lemma}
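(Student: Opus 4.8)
The plan is to reduce the identity to the description of the Thom isomorphism via cup product with the Thom class, combined with the Cartan formula, the Wu formula for the Thom class, and the triviality hypothesis.

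First I would set up notation. Write $k = \mathrm{rk}\,\xi^\perp$, let $\pi \colon D\xi^\perp \to X$ be the disk-bundle projection (a homotopy equivalence), and let $\mu = \mu(\xi^\perp) \in \widetilde H^k(\Th\xi^\perp) \cong H^k(D\xi^\perp, S\xi^\perp)$ be the Thom class, so that the Thom isomorphism is realized as $\varphi(y) = \pi^*(y) \cup \mu$ with respect to the relative cup product. I interpret all of $w(-)$, $v(-)$ and $\Sq$ degreewise, so that no convergence issue arises; in particular $v(\xi)$ is the unique class with $\Sq(v(\xi)) = w(\xi)$, which exists because the total Steenrod square is a degreewise-invertible ring endomorphism of $H^*(X)$.

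Next I would expand, using the Cartan formula for the total square (valid also for the relative product $H^*(D\xi^\perp) \otimes H^*(D\xi^\perp, S\xi^\perp) \to H^*(D\xi^\perp, S\xi^\perp)$):
\[
\Sq\bigl(\varphi(v(\xi)x)\bigr) = \Sq\bigl(\pi^*(v(\xi)x) \cup \mu\bigr) = \Sq(\pi^* v(\xi)) \cup \Sq(\pi^* x) \cup \Sq(\mu).
\]
By naturality of the Steenrod squares and the definition of the Wu class, $\Sq(\pi^* v(\xi)) = \pi^* \Sq(v(\xi)) = \pi^* w(\xi)$, and similarly $\Sq(\pi^* x) = \pi^* \Sq(x)$. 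The Wu formula for the Thom class gives $\Sq(\mu) = \pi^*(w(\xi^\perp)) \cup \mu = \varphi(w(\xi^\perp))$. Substituting and collecting the pulled-back factors yields
\[
\Sq\bigl(\varphi(v(\xi)x)\bigr) = \pi^*\bigl(w(\xi)\, w(\xi^\perp)\, \Sq(x)\bigr) \cup \mu.
\]
Finally, since $\xi \oplus \xi^\perp$ is trivial, the Whitney sum formula gives $w(\xi) w(\xi^\perp) = w(\xi \oplus \xi^\perp) = 1$, so the right-hand side collapses to $\pi^*(\Sq(x)) \cup \mu = \varphi(\Sq(x))$, as claimed.

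The only genuinely non-formal ingredient is the identity $\Sq(\mu) = \varphi(w(\xi^\perp))$ together with the compatibility of the Cartan formula with the relative module structure defining $\varphi$; this is classical — it is in fact one standard way to \emph{define} the Stiefel--Whitney classes of $\xi^\perp$ — so I would cite it (e.g.\ Milnor--Stasheff) rather than reprove it. Everything else is bookkeeping.
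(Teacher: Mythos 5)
Your proof is correct and follows essentially the same route as the paper's: realize $\varphi$ as multiplication by the Thom class $\mu$, apply the Cartan formula, use $\Sq(\mu)=w(\xi^\perp)\mu$ together with $\Sq(v(\xi))=w(\xi)$, and finish with the Whitney formula and the triviality of $\xi\oplus\xi^\perp$. The extra care you take with the relative cup product and the projection $\pi^*$ is a fine (slightly more explicit) rendering of the same argument.
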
 
\begin{proof}
Let \(\mu = \mu(\xi^\perp) \in H^{\mathrm{rk}\,\xi^\perp}(\Th \xi^\perp)\) be the Thom class.
We have
$$\mathrm{Sq}(\varphi(v(\xi)x))\overset{(1)}{=} \mathrm{Sq}(\mu v(\xi)x)\overset{(2)}{=} \mathrm{Sq}(\mu)\mathrm{Sq}(v(\xi))\mathrm{Sq}(x)\overset{(3)}{=} \mu w(\xi^\perp) w(\xi) \mathrm{Sq}(x)\overset{(4)}{=}\mu \mathrm{Sq}(x)\overset{(5)}{=}\varphi(\mathrm{Sq}(x)),$$
where
\begin{enumerate}
\item definition of the Thom isomorphism,
\item Cartan formula,
\item definition of Stiefel--Whitney classes via the action of Steenrod squares on the Thom class and definition of Wu classes,
\item Whitney formula,
\item definition of Thom isomorphism. \qedhere
\end{enumerate}
\end{proof}

Let $\varphi\colon H^*(\Gr_{n+1}(\R^{q+1}))\to \widetilde H^{*+q-n}(\Th\bar{\gamma}_{n+1}^{q+1})$ be the Thom isomorphism.
\begin{lemma}\label{57}
For every $x\in H^{n-i+1}(\Gr_{n+1}(\R^{q+1}))$ one has
$$\varphi(\mathrm{Sq}^i(x)+v_i(\gamma_{n+1}^{q+1})x) = \sum_{j=0}^{q} \mathrm{Sq}^{q-j+1}(a_j)\in \widetilde H^{q+1}(\Th\bar{\gamma}_{n+1}^{q+1})$$ 
for some $a_j \in \widetilde H^j(\Th \bar \gamma_{n+1}^{q+1})$.
\end{lemma}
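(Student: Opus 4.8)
The plan is to read the identity off as the component of degree $q+1$ of the relation in Lemma~\ref{56}. Apply that lemma to the bundle $\xi=\gamma_{n+1}^{q+1}$ over $\Gr_{n+1}(\R^{q+1})$, whose orthogonal complement $\bar\gamma_{n+1}^{q+1}$ has rank $q-n$; the Thom isomorphism appearing there is precisely the $\varphi$ fixed just before the statement. Writing $v=v(\gamma_{n+1}^{q+1})=\sum_{j\ge 0}v_j(\gamma_{n+1}^{q+1})$ for the total Wu class, Lemma~\ref{56} gives, for every $x\in H^{n-i+1}(\Gr_{n+1}(\R^{q+1}))$,
\[\mathrm{Sq}\bigl(\varphi(v\,x)\bigr)=\varphi(\mathrm{Sq}(x)).\]
The case $i=0$ is trivial, since then $\mathrm{Sq}^0(x)+v_0(\gamma_{n+1}^{q+1})x=x+x=0$; so assume $i\ge 1$, and note that for $x$ of degree $n-i+1$ to be nonzero one needs $i\le n+1<q$, using $q\ge 2n+2$.

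I would then compare the components of both sides in $\widetilde H^{q+1}(\Th\bar\gamma_{n+1}^{q+1})$. Since $\varphi$ raises degree by $q-n$, the summand $\varphi(v_j x)$ of $\varphi(vx)=\sum_{j\ge 0}\varphi(v_j x)$ lies in degree $q-i+1+j$, so $\mathrm{Sq}^m(\varphi(v_j x))$ reaches degree $q+1$ exactly when $m=i-j$ with $0\le j\le i$; likewise $\varphi(\mathrm{Sq}^m x)$ lies in degree $q-i+1+m$, so only $m=i$ survives on the right. Equating, and separating the outer summands $j=0$, which is $\mathrm{Sq}^i(\varphi(x))$ since $v_0=1$, and $j=i$, which is $\mathrm{Sq}^0(\varphi(v_i x))=\varphi(v_i x)$, I get over $\Z/2$ the identity
\[\varphi\bigl(\mathrm{Sq}^i(x)+v_i(\gamma_{n+1}^{q+1})\,x\bigr)=\mathrm{Sq}^i(\varphi(x))+\sum_{j=1}^{i-1}\mathrm{Sq}^{i-j}\bigl(\varphi(v_j(\gamma_{n+1}^{q+1})\,x)\bigr).\]

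Finally I would match the right-hand side to the shape $\sum_{j=0}^{q}\mathrm{Sq}^{q-j+1}(a_j)$. The class $\varphi(x)$ has degree $q-i+1$ and $i=q-(q-i+1)+1$, so the first term is the contribution of $\ell=q-i+1$ with $a_\ell=\varphi(x)$; similarly $\varphi(v_j(\gamma_{n+1}^{q+1})x)$ has degree $q-i+1+j$ and $i-j=q-(q-i+1+j)+1$, so each remaining summand is the contribution of $\ell=q-i+1+j$ with $a_\ell=\varphi(v_j(\gamma_{n+1}^{q+1})x)$, all other $a_\ell$ being zero. The indices $\ell$ that occur fill the interval from $q-i+1$ to $q$, which lies in $\{0,\dots,q\}$ because $i\le n+1<q$, and all exponents $q-\ell+1$ are then positive; this finishes the proof. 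I do not expect a genuine obstacle here — the only thing demanding care is the degree bookkeeping when isolating the degree-$q+1$ component and the check that the resulting indices and exponents fall in the admissible ranges.
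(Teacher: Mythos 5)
Your proof is correct and follows essentially the same route as the paper: both invoke Lemma~\ref{56} for $\xi=\gamma_{n+1}^{q+1}$ and then extract the degree-$(q+1)$ homogeneous component of the identity $\mathrm{Sq}(\varphi(v(\gamma_{n+1}^{q+1})x))=\varphi(\mathrm{Sq}(x))$, with the $a_j$ being the homogeneous pieces of $\varphi(v(\gamma_{n+1}^{q+1})x)$ and the $j=i$ term $\mathrm{Sq}^0(\varphi(v_i x))$ cancelling against $\varphi(v_i x)$ over $\Z/2$. The only difference is presentational: the paper keeps the full sum $\sum_{j=0}^{q+1}\mathrm{Sq}^{q-j+1}(a_j)$ and cancels the top term, while you explicitly identify which $a_j$ are nonzero; the content is identical.
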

\begin{proof}
Denote by $a$ and $b$ the images of $v(\gamma_{n+1}^{q+1})x$ and $\mathrm{Sq}(x)$ under $\varphi$, respectively, and by $a_j, b_j$ their homogeneous components. By Lemma~\ref{56}, \(\Sq(a)=b\). We have
\begin{align*}
\varphi(\mathrm{Sq}^i(x)+v_i(\gamma_{n+1}^{q+1})x) &= \varphi(\mathrm{Sq}^i(x))+\varphi(v_i(\gamma_{n+1}^{q+1})x)=b_{q+1} + a_{q+1} = \\
&= \sum_{j=0}^{q+1} \mathrm{Sq}^{q-j+1}(a_j) + a_{q+1}= \sum_{j=0}^{q} \mathrm{Sq}^{q-j+1}(a_j) + \mathrm{Sq}^0(a_{q+1}) + a_{q+1} = \\
&= \sum_{j=0}^{q} \mathrm{Sq}^{q-j+1}(a_j). \qedhere
\end{align*}
\end{proof}

\begin{remark} \label{dold}
According to \cite{Br},  the subgroup $\mathcal{R}_{n+1}\subset H^{n+1}(\Gr_{n+1}(\R^{q+1}))$ is generated by the set  $$\left\{\mathrm{Sq}^i(x)+v_ix | x\in H^{n+1-i}(\Gr_{n+1}(\R^{q+1})),\, i=1,\ldots,n+1\right\}.$$
\end{remark}
Let $\Phi \colon H^*(B\xrightarrow{i\circ f}\Gr_{n+1}(\R^{q+1}))\to H^{*+q-n}(\Th f^*\bar\gamma_n^q\xrightarrow{\Th(i\circ f)} \Th\bar \gamma_{n+1}^{q+1})$ be the relative Thom isomorphism.
\begin{corollary}   \label{59}
Let $R\in\mathcal{R}_{n+1}$ be some relation, and $r\in H^{n+1}(B\xrightarrow{i\circ f}\Gr_{n+1}(\R^{q+1}))$ be its relative lift.
Then
\[\Phi(r)|_{\Th\bar\gamma_{n+1}^{q+1}}=\sum_{j=0}^{q}\mathrm{Sq}^{q-j+1}(a_{j})\in \tilde{H}^{q+1}(T\bar{\gamma}_{n+1}^{q+1})\]
for some $a_j\in \tilde{H}^{j}(\Th\bar\gamma_{n+1}^{q+1})$.
\end{corollary}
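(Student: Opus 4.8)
The plan is to reduce the statement to the \emph{absolute} Thom class via the compatibility relation \eqref{thomrel}, and then invoke the explicit description of $\mathcal{R}_{n+1}$ from Remark~\ref{dold} together with Lemma~\ref{57}.

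First I would apply \eqref{thomrel} to the class $r \in H^{n+1}(B \xrightarrow{i\circ f}\Gr_{n+1}(\R^{q+1}))$ with the bundle $\bar\gamma_{n+1}^{q+1}$ over $\Gr_{n+1}(\R^{q+1})$; recall that $(i\circ f)^*\bar\gamma_{n+1}^{q+1} = f^*\bar\gamma_n^q$ by the pullback square for $\bar\gamma$, so that $\Phi$ is indeed the relative Thom isomorphism landing in $\widetilde{H}^{*+q-n}(\Th f^*\bar\gamma_n^q \xrightarrow{\Th(i\circ f)}\Th\bar\gamma_{n+1}^{q+1})$. Since $r|_{\Gr_{n+1}(\R^{q+1})} = R$ by the definition of a relative lift, relation \eqref{thomrel} gives
\[\Phi(r)|_{\Th\bar\gamma_{n+1}^{q+1}} = \varphi(R) \in \widetilde{H}^{q+1}(\Th\bar\gamma_{n+1}^{q+1}),\]
where $\varphi$ is the absolute Thom isomorphism of $\bar\gamma_{n+1}^{q+1}$. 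Hence it suffices to show that $\varphi(R)$ has the claimed form.

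Next, by Remark~\ref{dold} the subgroup $\mathcal{R}_{n+1}$ is generated by the classes $\Sq^i(x) + v_i(\gamma_{n+1}^{q+1})\,x$ with $x \in H^{n+1-i}(\Gr_{n+1}(\R^{q+1}))$ and $1 \le i \le n+1$, so I would write $R$ as a finite sum $R = \sum_\ell \bigl(\Sq^{i_\ell}(x_\ell) + v_{i_\ell}(\gamma_{n+1}^{q+1})\,x_\ell\bigr)$. Each summand lies in degree $n+1$, with $x_\ell \in H^{(n+1)-i_\ell}(\Gr_{n+1}(\R^{q+1})) = H^{n-i_\ell+1}(\Gr_{n+1}(\R^{q+1}))$, so Lemma~\ref{57} applies verbatim and yields $\varphi\bigl(\Sq^{i_\ell}(x_\ell) + v_{i_\ell}(\gamma_{n+1}^{q+1})\,x_\ell\bigr) = \sum_{j=0}^q \Sq^{q-j+1}(a_j^{(\ell)})$ for suitable $a_j^{(\ell)} \in \widetilde{H}^j(\Th\bar\gamma_{n+1}^{q+1})$. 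Using linearity of $\varphi$ and of each $\Sq^{q-j+1}$, summing over $\ell$ and setting $a_j := \sum_\ell a_j^{(\ell)} \in \widetilde{H}^j(\Th\bar\gamma_{n+1}^{q+1})$ gives $\varphi(R) = \sum_{j=0}^q \Sq^{q-j+1}(a_j)$, which combined with the previous paragraph is exactly the assertion.

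I do not expect any serious obstacle here; the proof is a short assembly of \eqref{thomrel}, Remark~\ref{dold} and Lemma~\ref{57}. The only points that need care are bookkeeping ones: verifying that the Wu class in the generating set of Remark~\ref{dold} is precisely $v_i(\gamma_{n+1}^{q+1})$ so that it matches the hypothesis of Lemma~\ref{57}, and noting that the collection of classes expressible as $\sum_{j=0}^q \Sq^{q-j+1}(a_j)$ is a subgroup (closed under addition), so that passing from a single generator to the finite sum representing $R$ stays within this form.
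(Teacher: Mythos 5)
Your proposal is correct and follows exactly the paper's own argument: restrict via \eqref{thomrel} to reduce to $\varphi(R)$, decompose $R$ using Remark~\ref{dold}, apply Lemma~\ref{57} to each generator, and regroup the resulting terms by linearity. No further comment is needed.
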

\begin{proof}
According to Remark~\ref{dold}, 
$$R=\smashoperator[r]{\sum_{i=1}^{n+1}}(\mathrm{Sq}^i(x_i)+v_i(\gamma_{n+1}^{q+1})x_i)$$
for some $x_i\in H^{n-i+1}(\Gr_{n+1}(\R^{q+1}))$.
Then by Lemma~\ref{57} we conclude that 
\begin{align*}
\Phi(r)|_{\Th\bar\gamma_{n+1}^{q+1}} \underset{\eqref{thomrel}}{=} \varphi(R) = \sum_{i=1}^{n+1}\varphi(\mathrm{Sq}^i(x_i)+v_i(\gamma_{n+1}^{q+1})x_i) = \sum_{i=1}^{n+1}\smashoperator[r]{\sum_{j=0}^{q}}\mathrm{Sq}^{q-j+1}(a_{ij})  = \sum_{j=0}^q \mathrm{Sq}^{q-j+1}\left(\sum_{i=1}^{n+1}a_{ij}\right).
\end{align*}
Then, we can take \(a_j \defeq \sum_{i = 1}^{n+1} a_{ij}\).
\end{proof}

\subsection{Geometric constructions}

\subsubsection{Representation for image of the relative lift under the Thom isomorphism}
\label{alpha_omega_construction}
Let \(R \in \mathcal{R}_{n+1}\) be some relation and \(r \in H^{n+1}(B \xrightarrow{i \circ f} \Gr_{n+1}(\R^{q+1}))\) be its relative lift. Consider a cocycle \(\rho \in Z^{q+1}(\Th f^* \bar \gamma_n^q \xrightarrow{\Th(i \circ f)} \Th \bar \gamma_{n+1}^{q+1})\) representing \(\Phi(r)\), where \(\Phi\) is the relative Thom isomorphism. Then, \(\rho = (\rho^{(0)}, \rho^{(1)})\), where
\begin{align*}
&\rho^{(0)} \in Z^{q+1}(\Th \bar \gamma_{n+1}^{q+1}), \\
&\rho^{(1)} \in C^q(\Th f^* \bar \gamma_n^q), \\
&\Th(i \circ f)^*(\rho^{(0)}) = \delta(\rho^{(1)}).
\end{align*}
By Corollary~\ref{59}, there are some cocycles \(\alpha_j \in Z^j(\Th \bar \gamma_{n+1}^{q+1})\) and some cochain \(\omega \in C^q(\Th \bar \gamma_{n+1}^{q+1})\)  such that
\begin{equation}
\label{999}
\rho^{(0)} = \rho|_{\Th\bar\gamma_{n+1}^{q+1}}=\sum_{j=0}^{q}\mathrm{Sq}^{q-j+1}(\alpha_j)  + \delta(\omega).
\end{equation}

\subsubsection{Representation for the relative fundamental class \([D^{q+1}]\)}
\label{sec:cycle_choice}
For future needs, we have to start working subordinate to some open cover.
Let \(\Sigma\) be an open cover of the sphere \(S^q\). Denote by \(C_*(\Sigma)\) the complex of singular chains in \(S^q\) subordinate to the cover \(\Sigma\) (similar notation will be used for the groups of cocycles and (co)homologies).
There is a linear map \((-)|_\Sigma \colon C^*(D^{q+1}) \to C^*(\Sigma)\) given by
the restriction along the composition
\[C_*(\Sigma) \hookrightarrow C_*(S^q) \hookrightarrow C_*(D^{q+1}).\]
Our desire to work locally is justified by the following lemma, which will be applied in the next section.
\begin{lemma}[{\cite[Proposition~5.2]{Pod1}}]
\label{lemma_10000000}
Let \(l \ge 1\) and \(x, y \in Z^{q+1-l}(D^{q+1})\) by such that \(x|_\Sigma = y|_\Sigma\). Then one has
\[\langle\mathrm{Sq}^l(x), M_{(0)} \rangle = \langle\mathrm{Sq}^l(y), M_{(0)}\rangle.\]
\end{lemma}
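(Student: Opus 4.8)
This is a purely algebraic statement about Steenrod squares of relative cochains on the pair $(S^q \hookrightarrow D^{q+1})$, and the point is that $\mathrm{Sq}^l$ of a cocycle, evaluated against a chain that lies in $S^q$, depends only on the restriction of the cocycle to an arbitrarily fine singular subdivision of $S^q$. So first I would recall from the explicit chain model of \S\ref{sec:coordinate_approach} that a relative cocycle $x \in Z^{q+1-l}(D^{q+1})$, which here means $x = (x^{(0)}, x^{(1)})$ with $x^{(0)} \in Z^{q+1-l}(D^{q+1})$ absolute and $x^{(1)} \in C^{q-l}(S^q)$ satisfying the cocycle condition; but the relevant pairing $\langle \mathrm{Sq}^l(x), M_{(0)}\rangle$ only sees the component that pairs against the top chain of $D^{q+1}$, and $M_{(0)}$ is a cycle representative supported on $S^q \subset D^{q+1}$ — so after unwinding the Tot-complex bookkeeping the claim reduces to an absolute statement about $\mathrm{Sq}^l$ of cocycles on $D^{q+1}$ evaluated on a chain in $S^q$.

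Second, the key mechanism is locality of the cup-$i$ products. The Steenrod square $\mathrm{Sq}^l(x)$ at cochain level is $x \smile_{(q+1-l)-l} x = x \smile_{q+1-2l} x$ (for $x$ of degree $q+1-l$), and the cup-$i$ product of two cochains evaluated on a singular simplex $\sigma \colon \Delta^{q+1} \to D^{q+1}$ is a sum of terms of the form $\langle x, \sigma \circ (\text{front face})\rangle \cdot \langle x, \sigma \circ (\text{back face})\rangle$. Each of these faces is a singular simplex \emph{into} $D^{q+1}$, but if $\sigma$ itself lands in $S^q$ then so do all its faces. Hence $\langle \mathrm{Sq}^l(x), M_{(0)}\rangle$ only involves the values of $x$ on simplices contained in $S^q$, and in fact, after subdividing $M_{(0)}$ to be subordinate to $\Sigma$ (which changes nothing in homology, hence nothing in the pairing with the cocycle $\mathrm{Sq}^l(x)$), only on simplices subordinate to $\Sigma$. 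Since $x|_\Sigma = y|_\Sigma$, the two evaluations agree.

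The one genuine subtlety — and I expect this to be the main technical point rather than a real obstacle — is that $M_{(0)}$ is only a \emph{relative} fundamental cycle, and one must pick a representative with $M_{(0)}$ (the $D^{q+1}$-component) actually carried by $S^q$; this is exactly the geometric input saying the fundamental class of $(S^q \hookrightarrow D^{q+1})$ is represented by the boundary sphere. Given such a representative, replace $M_{(0)}$ by a $\Sigma$-small cycle homologous to it in $S^q$; then $\langle \mathrm{Sq}^l(x) - \mathrm{Sq}^l(y), M_{(0)}\rangle = \langle \delta(\text{something}), M_{(0)}\rangle + (\text{terms only involving } x|_\Sigma, y|_\Sigma)$, and both pieces vanish — the coboundary because $M_{(0)}$ is a cycle, the rest because $x$ and $y$ agree on $\Sigma$. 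Since this is verbatim \cite[Proposition~5.2]{Pod1}, I would simply cite it; but the sketch above is the argument one would reconstruct if needed.
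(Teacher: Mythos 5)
The paper offers no proof of this lemma: it is quoted verbatim from \cite[Proposition~5.2]{Pod1}, so your final decision to cite it is exactly what the author does. The problem is with the sketch you present as the reconstructible argument, which rests on a mischaracterization of $M_{(0)}$. First, $Z^{q+1-l}(D^{q+1})$ here is the \emph{absolute} cocycle group of the space $D^{q+1}$, not a relative $\mathrm{Tot}$-complex. More seriously, in the setup of \S\ref{sec:cycle_choice} the chain $M_{(0)}$ is a $(q+1)$-chain of $D^{q+1}$ with $\partial M_{(0)} = (S^q \to D^{q+1})_*(M_{(1)})$: it is neither a cycle nor carried by $S^q$, and it cannot be made so --- a chain in $C_{q+1}(S^q)$ bounding $M_{(1)}$ would represent $0$ in $H_{q+1}(S^q \to D^{q+1})$ rather than the relative fundamental class. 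Consequently your central step, that $\langle \Sq^l(x), M_{(0)}\rangle$ only sees values of $x$ on $\Sigma$-small simplices because all simplices of $M_{(0)}$ lie in $S^q$, fails; so does the claim that the coboundary term vanishes ``because $M_{(0)}$ is a cycle.'' A further warning sign is that your argument nowhere uses the hypothesis $l \ge 1$, without which the statement is false.

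A correct argument runs differently. Put $z = x + y$, so that $z$ is a cocycle with $z|_\Sigma = 0$. The coboundary formula for cup-$i$ products gives $\Sq^l(x) + \Sq^l(y) = \Sq^l(z) + \delta(y \smile_{q+2-2l} z)$; since $l \ge 1$ the degree of $z$ is at most $q$, so contractibility of $D^{q+1}$ yields $z = \delta c$, and another application of the coboundary formula exhibits $\Sq^l(z) = \delta\bigl(c \smile_{q-2l} c + c \smile_{q+1-2l} z\bigr)$ as an explicit coboundary. Every coboundary pairs with $M_{(0)}$ as its primitive pairs with $\partial M_{(0)} = M_{(1)}$, which \emph{is} $\Sigma$-small; locality of $\smile_i$ (faces of $\Sigma$-small simplices are $\Sigma$-small) kills all terms containing a factor of $z$, and the surviving term $\langle \Sq^l(c)|_\Sigma, M_{(1)}\rangle$ vanishes because $c|_\Sigma$ is a cocycle of $C^*(\Sigma)$ and $\Sq^l$ annihilates $H^{q-l}(S^q)$ for $l \ge 1$. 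So the locality of cup-$i$ products that you correctly identify must be applied to $M_{(1)}$, not to $M_{(0)}$, and two essential inputs --- contractibility of the disk and the vanishing of $\Sq^l$ on the sphere in the relevant degree --- are absent from your sketch. Citing \cite{Pod1} is acceptable, as the paper does the same, but the argument as you have written it would not assemble into a proof.
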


Let \(M_{(1)} \in Z_q(\Sigma)\) be a cycle whose image in \(Z_q(S^q)\) represents the fundamental class \([S^q]\). Choose some chain \(M_{(0)} \in C_{q+1}(D^{q+1})\) such that \(\partial M_{(0)} = (S^q \to D^{q+1})_*(M_{(1)})\). Then the fundamental class \([D^{q+1}] \in H_{q+1}(S^q \to D^{q+1})\) is represented by the cycle \[(M_{(0)}, M_{(1)}) \in Z_{q+1}(S^q \to D^{q+1}).\]

Note that since \(M_{(1)} \in Z_q(\Sigma)\), we have
\begin{equation}\label{1001}
\langle \delta \mathfrak{b}^*(\omega), M_{(0)} \rangle = \langle \mathfrak{b}^*(\omega), M_{(1)} \rangle \overset{\eqref{a_b_commutativity}}{=} \langle \mathfrak{a}^*(\Th (i \circ f)^*(\omega))|_\Sigma, M_{(1)} \rangle,
\end{equation}
and
\begin{equation}\label{1002}
\langle \mathfrak{a}^*(\rho^{(1)}), M_{(1)} \rangle = \langle \mathfrak{a}^*(\rho^{(1)})|_\Sigma, M_{(1)} \rangle.
\end{equation}

\subsubsection{Geometric formula for secondary Stiefel-Whitney number}
\label{sec:geometric_formula_for_semichar}
For a bounding \(R\)-manifold \((X, g_X)\) and a null-cobordism \(Y\) for \(X\) one has
\begin{equation}
\label{10000}
\begin{aligned}
\varkappa_R(X, g_X) &= \langle (g_X, \tau_Y)^*(r), [Y] \rangle \overset{\text{Lm.~\ref{55}\, (2)}}{=} \langle (\mathfrak{a}, \mathfrak{b})^*(\Phi(r)), [D^{q+1}] \rangle \\
&= \langle (\mathfrak{a}, \mathfrak{b})^*(\rho), M \rangle = \langle \mathfrak{b}^*(\rho^{(0)}), M_{(0)} \rangle + \langle \mathfrak{a}^*(\rho^{(1)}), M_{(1)} \rangle \\
&\overset{\eqref{999}}{=} \langle \sum_{j = 0}^q \Sq^{q-j+1}(\mathfrak{b}^*(\alpha_j)), M_{(0)} \rangle + \langle \delta \mathfrak{b}^*(\omega), M_{(0)} \rangle + \langle \mathfrak{a}^*(\rho^{(1)}), M_{(1)} \rangle \\
&\overset{\eqref{1001},\,\eqref{1002}}{=} \langle \sum_{j = 0}^q \Sq^{q-j+1}(\mathfrak{b}^*(\alpha_j)), M_{(0)} \rangle + \langle \mathfrak{a}^*(\Th (i \circ f)^*(\omega))|_\Sigma + \mathfrak{a}^*(\rho^{(1)})|_\Sigma, M_{(1)} \rangle.
\end{aligned}
\end{equation}
\section{Quadratic property}
\label{sec:quadr}
 Recall that we fix non-negative integers \(n\) and \(q\) such that \(q \ge 2n + 2\), and consider \(n\)-submanifolds of \(\mathbb{R}^q\).
Consider the set of pairs $((X, g_X),x)$ where $(X,g_X)$ is a $(B,f)$-manifold of dimension $n$ (not necessarily compact)  without boundary and $x\in X$. The pairs $((X_1,g_{X_1}),x_1)$ and $((X_2,g_{X_2}),x_2)$ are called equivalent if $x_1 = x_2$ and there is an open neighborhood $x_1 \in U \subset \R^q$ such that $X_1 \cap U = X_2\cap U$ and $g_{X_1}|_{X_1\cap U}=g_{X_2}|_{X_2\cap U}$. An equivalence class \([(X, g_X), x]\) is called a \textit{germ} of a $(B,f)$-manifold, and the set of equivalence classes is denoted by $E_B$. Following our convention, if the pair \((B, f)\) is constructed from some relation \(R \in \mathcal{R}_{n+1}\), then we will write \(E_R\) instead of \(E_B\).

Note that there is a well-defined operation \(\mathrm{pr}_2 \colon E_B \to \mathbb{R}^q\) that returns the basepoint of a germ. If \(e \in E_B\) and \(\mathrm{pr}_2(e) = x\) then we write \(e = e_x\). 

For a $(B,f)$-manifold $(X,g_X)$ define its \textit{characteristic function}
\[I_{(X, g_X)} \colon E_B \to \Z/2, \; e_x \mapsto \begin{cases}1, & x \in X\,\text{and}\,e_x = [(X, g_X), x)], \\ 0, & \text{otherwise}.\end{cases}\]

\begin{definition}
For a set-function $f\colon G\to H$ between abelian groups, define its \textit{cross-effect} $d_f\colon G\times G\to H$ by the formula
\[d_f(a,b) \vcentcolon= f(a+b) - f(a) - f(b) +f(0).\]
If $d_f$ is bilinear, then the map $f$ is called \textit{quadratic}.
\end{definition}
Let us now formulate the main result of this section.
\begin{theorem}\label{main} For a relation $R\in\mathcal{R}_{n+1}$ and the corresponding cobordism group $\Omega_n^R$ there is a quadratic map $$\mathfrak{Q}:(\Z/2)^{E_R} \to \Omega_n^R$$ such that for any closed $R$-manifold $(X,g_X)$ the equality $\mathfrak{Q}(I_{(X,g_X)})=[X,g_X]^R$ holds.
\end{theorem}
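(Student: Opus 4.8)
The plan is to decompose the invariant $[X,g_X]^R$ along the short exact sequence $0\to\Z/2\to\Omega_n^R\xrightarrow{U}\Omega_n^O\to0$ of Corollary~\ref{cor:cobordism_group_descr}: the \emph{primary} part is the image $U([X,g_X]^R)=[X]^O\in\Omega_n^O$, which will turn out to be \emph{linear} in $I_{(X,g_X)}$, and the \emph{secondary} part is a $\Z/2$-valued correction governed by $\varkappa_R$, which will turn out to be \emph{quadratic}. Everything rests on one technical ingredient, a \emph{linearization lemma}: after fixing a single sufficiently fine open cover $\Sigma$ of $S^q$, the cochain produced by restricting the Pontryagin--Thom collapse pullback $\mathfrak{a}^*(\zeta)$ of a fixed cocycle $\zeta$ (on $\Th f^*\bar\gamma_n^q$) to the $\Sigma$-small chains extends, as a function of $I_{(X,g_X)}\in(\Z/2)^{E_R}$, to a map of degree $\le1$ on all of $(\Z/2)^{E_R}$ (vanishing on the empty manifold, hence in fact linear). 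This is where the germ model $E_R$ does its work: a $\Sigma$-small singular simplex meets only a small ball of $\R^q$, inside which $(X,g_X)$ is recorded by finitely many occupancy bits $I_{(X,g_X)}(e_x)$, and the value of $\mathfrak{a}^*(\zeta)$ on such a simplex is a fixed function of those bits, so the cochain assembles from local contributions affinely --- this is the precise analogue in our setting of the polynomiality estimates of \cite{Pod1}.

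Granting the linearization lemma, the primary part is linear. By Lemma~\ref{55}(1) every Stiefel--Whitney number of $X$ equals $\langle\mathfrak{a}^*(\varphi(k)),[S^q]\rangle$ for a fixed $k\in H^n(B)$, that is, $\langle\mathfrak{a}^*(\zeta)|_\Sigma,M_{(1)}\rangle$ for a fixed cocycle $\zeta$ representing $\varphi(k)$ and the cycle $M_{(1)}\in Z_q(\Sigma)$ of \S\ref{sec:cycle_choice}; by the lemma this is linear in $I_{(X,g_X)}$. Since $\Omega_n^O$ embeds via Stiefel--Whitney numbers into a finite-dimensional $\Z/2$-vector space $V$ (Thom), composing the linear map recording all Stiefel--Whitney numbers with a $\Z/2$-linear retraction $V\to\Omega_n^O$ yields a linear $\ell\colon(\Z/2)^{E_R}\to\Omega_n^O$ with $\ell(I_{(X,g_X)})=[X]^O$. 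To lift $\ell$ through $U$ without destroying quadraticity, write $\ell=\sum_\alpha\ell_\alpha\,e_\alpha$ for a $\Z/2$-basis $\{e_\alpha\}$ of $\Omega_n^O$, choose lifts $\hat e_\alpha\in\Omega_n^R$, and set $\tilde\ell(I):=\sum_\alpha\ell_\alpha(I)\,\hat e_\alpha$; computing cross-effects and using that every element of $\Omega_n^R$ has order dividing $4$ (as $\Omega_n^O$ is $2$-torsion and $\ker U\cong\Z/2$) shows that $\tilde\ell$ is quadratic and $U\circ\tilde\ell=\ell$.

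For the secondary part, put $\kappa(I_{(X,g_X)}):=[X,g_X]^R-\tilde\ell(I_{(X,g_X)})\in\ker U=\Z/2$. When $X$ bounds, all Stiefel--Whitney numbers vanish, so $\tilde\ell(I_{(X,g_X)})=0$ and $\kappa(I_{(X,g_X)})=[X,g_X]^R=\varkappa_R(X,g_X)$ by Lemma~\ref{lem:kernel_characterization}, which is computed by the geometric formula \eqref{10000}. For a general closed $R$-manifold one first stabilizes by a fixed collection of closed $R$-manifolds $Z_\alpha$ representing the $\hat e_\alpha$, so that $X\sqcup\bigsqcup_{\alpha\colon\ell_\alpha(I_X)=1}Z_\alpha$ bounds, and applies \eqref{10000} to that manifold; the resulting $\kappa$ is the composite of the \emph{linear} substitution $I\mapsto I+\sum_\alpha\ell_\alpha(I)\,I_{Z_\alpha}$ with \eqref{10000}, the latter now read as a polynomial map on $(\Z/2)^{E_R}$: each occurrence of $\mathfrak{b}^*(\alpha_j)$ and of $\mathfrak{a}^*(\rho^{(1)})|_\Sigma,\ \mathfrak{a}^*(\Th(i\circ f)^*(\omega))|_\Sigma$ is replaced by the degree-$\le1$ extension supplied by the linearization lemma (this is legitimate because, by \eqref{a_b_commutativity}, $\mathfrak{b}^*(\alpha_j)|_\Sigma=\mathfrak{a}^*(\Th(i\circ f)^*(\alpha_j))|_\Sigma$, and by Lemma~\ref{lemma_10000000} the value $\langle\Sq^{q-j+1}(\mathfrak{b}^*(\alpha_j)),M_{(0)}\rangle$ depends only on this $\Sigma$-restriction). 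Here the cocycles $\alpha_j$ and the cochains $\omega,\rho^{(1)}$ are fixed by Corollary~\ref{59}; the terms of the form $\langle\,\cdot\,|_\Sigma,M_{(1)}\rangle$ in \eqref{10000} are linear in $I_{(X,g_X)}$; and each term $\langle\Sq^{q-j+1}(\mathfrak{b}^*(\alpha_j)),M_{(0)}\rangle$ is a \emph{quadratic} function of the cochain $\mathfrak{b}^*(\alpha_j)$, because $\Sq^l(x)=x\cup_{|x|-l}x$ is built from the bilinear cup-$i$ products --- this is where the fact that Steenrod operations depend on cochains quadratically enters. Composing with the linear substitution, $\kappa$ is quadratic, and $\mathfrak{Q}:=\tilde\ell+\kappa$ is a quadratic map $(\Z/2)^{E_R}\to\Omega_n^R$ with $\mathfrak{Q}(I_{(X,g_X)})=\tilde\ell(I_{(X,g_X)})+\kappa(I_{(X,g_X)})=[X,g_X]^R$.

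The main obstacle I expect is the linearization lemma together with its uniformity --- showing that a single cover $\Sigma$ works simultaneously for all closed $R$-manifolds and that the combinatorial function of occupancy bits one writes down there is genuinely of degree $\le1$ on all of $(\Z/2)^{E_R}$, not just on characteristic functions. A secondary, more bookkeeping-type difficulty is the passage to non-bounding manifolds: choosing the stabilizing manifolds $Z_\alpha$, checking that the substitution $I\mapsto I+\sum_\alpha\ell_\alpha(I)\,I_{Z_\alpha}$ is indeed linear, and carrying out the order-$2$ accounting in $\mathfrak{Q}=\tilde\ell+\kappa$. Everything else --- the bilinearity of cup-$i$ products, Lemma~\ref{lemma_10000000}, Lemma~\ref{55}, Corollary~\ref{59}, and the extension of the Stiefel--Whitney-number map over the short exact sequence --- is standard or already established in the excerpt.
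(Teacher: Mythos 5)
Your overall strategy coincides with the paper's: split $[X,g_X]^R$ along the extension $0\to\Z/2\to\Omega_n^R\to\Omega_n^O\to 0$, get the primary part from Stiefel--Whitney numbers read off the collapse map (linear), get the secondary part from the geometric formula \eqref{10000} for $\varkappa_R$ after stabilizing to a bounding manifold (quadratic, via cup-$i$ products), and lift through $U$ using the $\Z/4$-module structure. But the step you yourself flag as the main obstacle is a genuine gap, and it is exactly the point where the paper does something you have not anticipated. Your ``linearization lemma'' asks for a single cover $\Sigma$ and a degree-$\le 1$ extension of $I_{(X,g_X)}\mapsto\mathfrak{a}^*(\zeta)|_\Sigma$ valid on \emph{all} of $(\Z/2)^{E_R}$, uniformly over all closed $R$-submanifolds of $\R^q$. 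This uniform statement is not available: the cover $\Sigma$ and the occupancy-bit description of $\mathfrak{a}^*(\zeta)$ on a $\Sigma$-small simplex depend on a choice of tubular neighborhoods, whose size cannot be bounded below simultaneously for all embedded submanifolds. The paper avoids this entirely by first proving Lemma~\ref{lem:finite_manifolds}: the set of quadratic maps $(\Z/2)^{E_R}\to\Omega_n^R$ is compact in the product topology, the constraints $\mathfrak{Q}(I_{(X,g)})=[X,g]^R$ are closed conditions, and they have the finite intersection property; hence it suffices to treat an arbitrary \emph{finite} collection $\{(X_i,g_{X_i})\}_{i\in I}$. Only then is the cover $\Sigma$ chosen (adapted to tubular neighborhoods of these finitely many manifolds), and the linearization input is Podkorytov's Proposition~5.4 (Lemma~\ref{lm:powerful}), a bilinear map $\mathfrak{A}\colon U^0\times C^*(\Th f^*\bar\gamma_n^q)\to C^*(\Sigma)$ defined only on the finite-dimensional span $U^0$ of the relevant characteristic functions, extended to $(\Z/2)^{E_R}$ by an arbitrary linear projection onto that span. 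Without this compactness reduction your argument does not close.

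A secondary, fixable error: in $\Omega_n^R$, which is a $\Z/4$-module and not $2$-torsion, your final assembly $\mathfrak{Q}=\tilde\ell+\kappa$ is off by $2\tilde\ell$. The stabilization procedure computes the class of $X\sqcup\bigsqcup_{\ell_\alpha(I_X)=1}Z_\alpha$, which is $[X,g_X]^R+\tilde\ell(I_X)$ in $\ker U$, not $[X,g_X]^R-\tilde\ell(I_X)$ as your definition of $\kappa$ requires; since $2\hat e_\alpha$ can be the nonzero element of $\ker U$ (cf.\ Example~\ref{ex}, where $2[\R P^n]^{w_{n+1}}\neq 0$), the two differ. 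The correct combination is $\mathfrak{Q}=\kappa-\tilde\ell$ with $\kappa$ the stabilized invariant, which is precisely the paper's $\mathfrak{Q}=\widetilde\varkappa_R\circ p-s\circ\cob$. The remaining ingredients you cite (bilinearity of $\smile_c$, Lemma~\ref{lemma_10000000}, Lemma~\ref{55}, Corollary~\ref{59}, the quadratic section via $\mathrm{sq}\colon\Z/2\to\Z/4$) are used in the paper exactly as you propose.
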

We will follow the idea of the proof of \cite[Theorem~0.2]{Pod1}.

Let \(\mathcal{X}\) be a collection of closed \((B, f)\)-manifolds. We say that this collection satisfies \textit{the quadratic property (Q)}, if there is a quadratic map \(\mathfrak{Q}_\mathcal{X} \colon (\Z/2)^{E_R} \to \Omega^R_n\) such that \[\mathfrak{Q}_\mathcal{X}(I_{(X, g)}) = [X, g]^R, \quad (X, g) \in \mathcal{X}.\]

\begin{lemma}\label{lem:finite_manifolds}
If every finite collection of closed \(R\)-manifolds satisfies (Q), then the collection of all closed \(R\)-manifolds satisfies (Q).
\end{lemma}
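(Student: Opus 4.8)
The plan is to deduce this by a compactness argument of Tychonoff type. For a finite subcollection $\mathcal F$ of closed $R$-manifolds, let $Q_{\mathcal F}$ be the set of all quadratic maps $\mathfrak Q\colon(\Z/2)^{E_R}\to\Omega^R_n$ with $\mathfrak Q(I_{(X,g)})=[X,g]^R$ for every $(X,g)\in\mathcal F$. By assumption each $Q_{\mathcal F}$ is nonempty, and since $Q_{\mathcal F_1\cup\mathcal F_2}=Q_{\mathcal F_1}\cap Q_{\mathcal F_2}$ with $\mathcal F_1\cup\mathcal F_2$ again finite, the family $\{Q_{\mathcal F}\}_{\mathcal F}$ has the finite intersection property. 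It therefore suffices to exhibit a compact Hausdorff space containing all the $Q_{\mathcal F}$ as closed subsets; any point of $\bigcap_{\mathcal F}Q_{\mathcal F}$ is then a quadratic map realizing $[X,g]^R$ on $I_{(X,g)}$ for \emph{every} closed $R$-manifold, hence a quadratic map demonstrating that the full collection satisfies (Q).

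First I would record that $\Omega^R_n$ is finite: by Corollary~\ref{cor:cobordism_group_descr} it is an extension of $\Omega^O_n$ by $\Z/2$, and $\Omega^O_n$ is finite by Thom's computation of the unoriented cobordism ring. Give $\Omega^R_n$ the discrete topology and let $F$ be the set of all functions $(\Z/2)^{E_R}\to\Omega^R_n$ with the product topology; by Tychonoff's theorem $F$ is compact Hausdorff. One should check along the way that $E_R$, the group $(\Z/2)^{E_R}$, and the collection of all closed $R$-manifolds are honest sets, so that the index family $\{Q_{\mathcal F}\}$ is a set and the intersection is legitimate rather than taken over a proper class.

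Next I would verify that each $Q_{\mathcal F}$ is closed in $F$. Being quadratic is the requirement that the cross-effect $d_f(a,b)=f(a+b)-f(a)-f(b)+f(0)$ be biadditive; this is a conjunction, indexed by triples of elements of $(\Z/2)^{E_R}$, of equalities in the \emph{finite} group $\Omega^R_n$, each of which involves only finitely many coordinates of $f$, hence cuts out a clopen subset of $F$. Their intersection $Q\subseteq F$ is thus closed. Similarly, for a fixed closed $R$-manifold $(X,g)$ the condition $f(I_{(X,g)})=[X,g]^R$ pins down a single coordinate of $f$ and is clopen, so $Q_{\mathcal F}=Q\cap\bigcap_{(X,g)\in\mathcal F}\{f:f(I_{(X,g)})=[X,g]^R\}$ is closed. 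Compactness of $F$ together with the finite intersection property then gives $\bigcap_{\mathcal F}Q_{\mathcal F}\neq\varnothing$, and any element of this intersection serves as $\mathfrak Q_{\mathcal X}$ for $\mathcal X$ the collection of all closed $R$-manifolds.

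The argument is essentially soft; the only points that need attention — and the closest thing to an obstacle — are the set-theoretic bookkeeping just mentioned and the finiteness of $\Omega^R_n$, since it is finiteness that lets us equip $\Omega^R_n$ with the discrete topology and still invoke Tychonoff. Everything else reduces to the observation that quadraticity and the (finitely many, for each $\mathcal F$) interpolation conditions are finitary, hence closed, in the product topology.
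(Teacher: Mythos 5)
Your argument is correct and is essentially the same as the paper's: both prove the lemma by topologizing the set of quadratic maps $(\Z/2)^{E_R}\to\Omega^R_n$ as a closed subspace of a compact product (using that $\Omega^R_n$ is finite) and applying the finite intersection property. Your write-up is somewhat more careful than the paper's about why quadraticity and the interpolation conditions cut out closed sets, but the underlying idea is identical.
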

\begin{proof}
Consider the set of all quadratic maps \(Q = \{E_R \to \Z/2\}\) and topologize it with the product topology \(\prod_{e\in E_R}(\Z/2)\). Then \(Q\) is clearly a compact space. For every closed \(R\)-manifold \((X, g)\), consider its subspace
\[Q_{(X, g)} = \{A \in Q \mid A(I_{(X, g)}) = [X, g]^R\}.\]
Since every finite intersection \(\bigcap_{i \in I} Q_{(X_i, g_{X_i})}\) is non-empty, the full intersection \(\bigcap_{(X, g)} Q_{(X, g)}\) is non-empty as well.
\end{proof}

From now on, we fix a finite collection \(\{(X_i, g_{X_i})\}_{i \in I}\) of closed \(R\)-manifolds. We shall prove that this collection satisfies (Q). Since our choice of \((X_i, g_{X_i})\)'s is completely arbitrary, this would imply Theorem~\ref{main} by Lemma~\ref{lem:finite_manifolds}.

Consider the linear subspace \(U\) of \((\Z/2)^{E_R}\) generated by the characteristic functions \(u_i \vcentcolon= I_{(X_i, g_{X_i})}\), \(i \in I\). Since there is a linear projection \((\Z/2)^{E_R} \to U\), it is enough to construct a quadratic function \(\mathfrak{Q} \colon U \to \Omega^R_n\) such that \(\mathfrak{Q}(u_i) = [X_i, g_{X_i}]^R\) for every \(i \in I\) to conclude the result.

\subsection{Outline of the construction}

Let \(\{(W_t, g_{W_t})\}_{t \in T}\) be a collection of closed \(R\)-manifolds such that \(\{[W_t]^O\}_{t \in T}\) is a basis of \(\Omega^O_n\). We can assume that \(W_t\)'s are disjoint with each \(X_i\) and with each other. Write \([X_i]^O = \sum_{t \in T} c_{it} [W_t]^O\), where \(c_{it} \in \Z/2\). Then for every \(i \in I\) we define a bounding \(R\)-manifold \((X^0_i, g_{X^0_i})\) given as a disjoint union:
\[(X^0_i, g_{X^0_i}) = (X_i, g_{X_i}) \cup \bigcup_{c_{it} \ne 0} (W_t, g_{W_t}).\]
Let \(U^0 \subset (\Z/2)^{E_R}\) be the subspace generated by the characteristic functions \(u^0_i \vcentcolon= I_{(X^0_i, g_{X^0_i})}\), \(i \in I\).

Consider an extension \(\Z/2 \hookrightarrow \Omega_n^R \twoheadrightarrow \Omega_n^O\) given by Corollary~\ref{cor:cobordism_group_descr}.

\begin{enumerate}
\item We construct a quadratic function \(\widetilde \varkappa_R \colon U^0 \to \Z/2\) such that
\[\widetilde \varkappa_R(u^0_i) = \varkappa_R(X^0_i, g_{X^0_i}), \quad i \in I.\]
\item We construct a linear function \(\cob \colon U \to \Omega_n^O\) such that
\[\cob(u_i) = [X_i]^O, \quad i \in I.\]
\item We construct a linear map \(p \colon U \to U^0\) such that
\[p(u_i) = u^0_i, \quad i \in I.\]
\item We construct a quadratic section \(s \colon \Omega_n^O \to \Omega_n^R\) to the natural projection such that
\[s([X_i]^O) = \sum_{c_{it} \ne 0} [W_t, g_{W_t}]^R, \quad i \in I.\]
\end{enumerate}
Then we put \(\mathfrak{Q} = \widetilde \varkappa_R \circ p - s \circ \cob\). Note that \(\mathfrak{Q}\) is quadratic, and
\begin{align*}
\mathfrak{Q}(u_i) &= \widetilde \varkappa_R(p(u_i)) - s(\cob(u_i)) = \widetilde \varkappa_R(u^0_i) - s([X_i]^O) \\
&= [X^0_i, g_{X^0_i}]^R - \sum_{c_{it} \ne 0} [W_t, g_{W_t}]^R = [X_i, g_{X_i}]^R, \quad i \in I.
\end{align*}


\subsubsection{Construction of \(\widetilde\varkappa_R\)}
\label{sec:construction_varkappa}

\begin{lemma}[{\cite[Proposition~5.4]{Pod1}}]
\label{lm:powerful}
For sufficiently small compatible tubular neighborhoods of \(X_i^0\)'s there is an open cover \(\Sigma\) of the sphere \(S^q\) and a bilinear map \(\mathfrak{A} \colon U^0 \times C^*(\Th f^* \bar \gamma_n^q) \to C^*(\Sigma)\) such that \(\mathfrak{A}(u_i^0, -) = \mathfrak{a}_i^*(-)|_\Sigma\) for every \(i \in I\), where \(\mathfrak{a}_i\) stands for the \((B, f)\)-collapse map for \(X_i^0\) constructed according chosen tubular neighborhoods.
\end{lemma}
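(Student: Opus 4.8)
The plan is to exploit that the Pontryagin--Thom collapse map $\mathfrak{a}_i$ is \emph{local in $X_i^0$}: the restriction of $\mathfrak{a}_i$ to a small open set $V \subseteq S^q$ depends only on the germ of the $R$-manifold $(X_i^0, g_{X_i^0})$ along $V$ together with the chosen tubular coordinates, and it is the constant map to the basepoint whenever $V$ misses $U(X_i^0)$. Consequently, if $\Sigma$ is chosen fine enough, the operator $c \mapsto \mathfrak{a}_i^*(c)|_\Sigma$ depends on $i$ only through $u_i^0 = I_{(X_i^0, g_{X_i^0})}$, so the desired bilinear map $\mathfrak{A}$ can be manufactured by summing the collapse maps over any set of indices representing a given element of $U^0$.

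First I would fix the tubular neighborhoods. Since each $X_i^0$ is the disjoint union of $X_i$ with a subfamily of the pairwise disjoint $W_t$'s, I would assemble $U(X_i^0)$ from one fixed tubular neighborhood of each $W_t$, used for all $i$ simultaneously, together with one fixed tubular neighborhood of each underlying submanifold $X_i$. Using compactness of $S^q$ and of the $X_i^0$'s, after shrinking we may assume that these neighborhoods are compatible in the sense needed for \eqref{a_b_commutativity}, and that $U(X_i^0) \cap U(X_j^0)$ lies over a region on which $(X_i^0, g_{X_i^0})$ and $(X_j^0, g_{X_j^0})$ coincide as $R$-manifolds and on which the two chosen tubular structures agree. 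Then, by a Lebesgue-number argument, I would pick a finite open cover $\Sigma$ of $S^q$ so fine that for every $V \in \Sigma$ and every $i$: the intersection $X_i^0 \cap V$ is empty or a single disk sitting inside a trivialized piece of $U(X_i^0)$; and whenever $(X_i^0, g_{X_i^0})$ and $(X_j^0, g_{X_j^0})$ share a germ at some point of $V$, they agree, together with their tubular structures, over an open neighborhood of $\overline{V}$. By the previous sentence this forces $\mathfrak{a}_i|_V = \mathfrak{a}_j|_V$ for such $i,j$.

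Now, for $u \in U^0$ pick a subset $J \subseteq I$ with $u = \sum_{i \in J} u_i^0$, and for $c \in C^k(\Th f^*\bar\gamma_n^q)$ and a singular $k$-simplex $\sigma$ subordinate to $\Sigma$, set $\mathfrak{A}(u,c)(\sigma) := \sum_{i \in J} c(\mathfrak{a}_i \circ \sigma)$, extended linearly to $\Sigma$-chains. The one point to check is independence of $J$, i.e. that $\sum_{i \in K} u_i^0 = 0$ in $(\Z/2)^{E_R}$ implies $\sum_{i \in K} c(\mathfrak{a}_i \circ \sigma) = 0$. Grouping $K$ by the germ of $X_i^0$ along the set $V$ containing $\operatorname{im}\sigma$, the fineness of $\Sigma$ makes $\mathfrak{a}_i \circ \sigma$ constant on each group; evaluating the hypothesis $\sum_{i \in K} u_i^0 = 0$ at the germs occurring in $V$ forces each group indexed by a nonempty germ to have even cardinality, while the group with $X_i^0 \cap V = \emptyset$ contributes only multiples of the value of $c$ on the constant basepoint simplex, which vanishes for the reduced cochains implicit in $\widetilde H^*(\Th -)$ (equivalently, one restricts $\mathfrak{A}$ to reduced cochains, which is all that is used afterwards). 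Granted well-definedness, bilinearity is immediate — linearity in $c$ is clear, and linearity in $u$ follows by taking symmetric differences of index sets — and choosing $J = \{i\}$ yields $\mathfrak{A}(u_i^0, c) = \mathfrak{a}_i^*(c)|_\Sigma$.

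I expect the main obstacle to be the second half of the geometric setup: arranging a single cover $\Sigma$ on which ``sharing a germ at a point'' upgrades to ``agreeing as structured submanifolds, tubular data included, over a neighborhood'', uniformly across the finitely many $X_i^0$'s. This is precisely where compactness of $S^q$ enters and where the earlier compatible, sufficiently small choice of tubular neighborhoods is needed; the parity bookkeeping in the definition of $\mathfrak{A}$ is then routine. This mirrors the argument of \cite[Proposition~5.4]{Pod1}.
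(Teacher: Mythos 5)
The paper does not actually prove this lemma --- it is quoted from \cite[Proposition~5.4]{Pod1} --- so there is no in-paper argument to compare against; I can only assess your sketch on its own terms, and it has a genuine gap at its central step. The gap is one of scale. The collapse map $\mathfrak{a}_i$ restricted to $V$ is \emph{not} determined by the germ of $(X_i^0,g_{X_i^0})$ along $V$: a point $y\in V$ is sent to the basepoint or to the normal fibre over $\pi_i(y)$ according to whether $y$ lies in the tubular neighborhood, so $\mathfrak{a}_i|_V$ depends on $X_i^0$ intersected with the full $\epsilon$-thickening of $V$, where $\epsilon$ is the tubular radius --- a fixed positive distance. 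In particular, $X_i^0\cap V=\emptyset$ does not make $\mathfrak{a}_i|_V$ constant (take $V\subset U(X_i^0)\setminus X_i^0$), so your ``empty'' group is not the basepoint group; and two manifolds that ``agree, together with their tubular structures, over an open neighborhood of $\overline V$'' need not have equal collapse maps on $V$, since one of them may have additional sheets at distance between $0$ and $\epsilon$ from $V$ that the other lacks. Hence the asserted implication ``sharing a germ at a point of $V$ forces $\mathfrak{a}_i|_V=\mathfrak{a}_j|_V$'' fails, and with it the parity bookkeeping.

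Nor is this repaired by merely taking $\Sigma$ ``fine enough.'' What the cancellation actually requires is that, for every $V$ and every relation $\sum_{i\in K}u_i^0=0$, the indices group into even classes under the relation ``same trace, with structure and tubular data, on the $\epsilon$-thickening $N_\epsilon(V)$.'' Evenness of single-germ multiplicities gives evenness of the classes of $\equiv_x$ (same germ at a point $x$), but the trace-on-$N_\epsilon(V)$ relation is the common refinement $\bigcap_{x\in N_\epsilon(V)}\equiv_x$, and a common refinement of even partitions can have odd classes (already four manifolds realizing the partitions $\{13|24\}$ and $\{12|34\}$ on two nearby loci produce four distinct local configurations, each with multiplicity one). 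Avoiding this forces a quantitative interplay between $\epsilon$, the mesh of $\Sigma$, and the pairwise germ-disagreement loci $F_{ij}$; the difficulty is that the agreement radius $d(x,F_{ij})$ tends to $0$ as $x$ approaches the frontier of the agreement locus, so no uniform shrinking of the cover upgrades ``same germ at a point of $V$'' to ``same trace on $N_\epsilon(V)$.'' Controlling exactly this degeneration is the content of Podkorytov's proof, and it is the step your argument passes over; you correctly flag it as the main obstacle but then dispose of it by an appeal to compactness of $S^q$ that does not suffice. The overall architecture (locality of the collapse map, a subordinate cover, germ-wise cancellation, reduced cochains for the basepoint class, symmetric differences for bilinearity) is the right one; the missing content is the uniform upgrade from pointwise germ data to $\epsilon$-scale configuration data.
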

Let \(\Sigma\) be such an open cover.
For every \(i \in I\) choose some null-cobordism \(Y_i \subset \R^{q+1}_i\) such that \(\partial Y_i = X_i^0\). Let \(\mathfrak{a}_i \colon S^q \to \Th f^* \bar \gamma_n^q\) and \(\mathfrak{b}_i \colon D^{q+1} \to \Th \bar\gamma^{q+1}_{n+1}\) be the collapse maps for \((X_i^0, g_{X_i^0})\) and \(Y_i\), respectively. Let \(\mathfrak{A}\) be a bilinear map from the lemma above. Now, we have to make some choices:
\begin{itemize}
\item choose a relative cocycle \((\rho^{(0)}, \rho^{(1)}) \in Z^{q+1}(\Th f^* \bar \gamma^q_n \to \Th \bar \gamma^{q+1}_{n+1})\) representing \(\Phi(r)\) and let \(\alpha_j \in Z^j(\Th \bar \gamma^{q+1}_{n+1})\) and \(\omega \in C^q(\Th \bar \gamma^{q+1}_{n+1})\) be as in \S\ref{alpha_omega_construction};
\item choose a relative cycle \((M_{(0)}, M_{(1)})\) representing the fundamental class \([D^{q+1}] \in H_{q+1}(S^q \to D^{q+1})\) as in \S\ref{sec:cycle_choice}.
\end{itemize}

For every \(j = 0, \ldots, q\), consider two linear maps:
\begin{itemize}
\item \(h_j \colon U^0 \to C^*(\Sigma)\) by setting \(h_j(u^0) \defeq \mathfrak{A}(u^0, \Th(i \circ f)^*(\alpha_j))\);
\item \((-)|_\Sigma\colon Z^j(D^{q+1}) \to C^j(\Sigma)\).
\end{itemize}

Then
\[h_j(u^0_i) = \mathfrak{A}(u^0_i, \Th(i \circ f)^*(\alpha_j)) \overset{\text{Lm.~\ref{lm:powerful}}}{=} \mathfrak{a}^*_i(\Th(i \circ f)^*(\alpha_j))|_\Sigma \overset{\eqref{a_b_commutativity}}{=} \mathfrak{b}^*_i(\alpha_j)|_\Sigma.\]
Thus, \(\mathrm{Im}(h_j) \subset \mathrm{Im}((-)|_\Sigma)\) for every \(j\). Then there exist linear maps \(J_j \colon U^0 \to Z^j(D^{q+1})\) such that \(J_j(-)|_\Sigma = h_j\) for every \(0 \le j \le q\).

Set
\[\widetilde\varkappa_R(u^0) \defeq \sum_{j = 0}^q \underbrace{\langle \Sq^{q-j+1}(J_j(u^0)), M_{(0)} \rangle}_{\text{quadratic part}} + \underbrace{\langle \mathfrak{A}(u^0, \Th(i \circ f)^*(\omega) + \rho^{(1)}), M_{(1)} \rangle}_{\text{linear part}}.\]
The first term is quadratic, since Steenrod squares are defined on the level of cochains in terms of the bilinear operation \(\smile_c\colon\ C^a(D^{q+1})\otimes C^b(D^{q+1})\to C^{a+b-c}(D^{q+1})\): \[Sq^d(x) \defeq x\smile_{a-d}x, \quad x \in C^a(D^{q+1}).\] 
Note that
\begin{equation}\label{eq_99}
\mathfrak{A}(u^0_i, \Th(i \circ f)^*(\omega) + \rho^{(1)}) \overset{\text{Lm.~\ref{lm:powerful}}}{=} \mathfrak{a}_i^*(\Th(i \circ f)^*(\omega))|_\Sigma + \mathfrak{a}_i^*(\rho^{(1)})|_\Sigma.
\end{equation}
Also,
\[J_j(u^0_i)|_\Sigma = h_j(u^0_i) = \mathfrak{b}_i^*(\alpha_j)|_\Sigma,\]
and hence
\begin{equation}
\label{eq_88}
\langle \Sq^{q-j+1}(J_j(u^0_i)), M_{(0)} \rangle = \langle \Sq^{q-j+1}(\mathfrak{b}_i^*(\alpha_j)), M_{(0)} \rangle
\end{equation}
for every \(i\) by Lemma~\ref{lemma_10000000}.
Hence, for every \(i\) one has
\begin{align*}
\widetilde\varkappa_R(u^0_i) &= \sum_{j = 0}^q \langle \Sq^{q-j+1}(J_j(u^0_i)), M_{(0)} \rangle + \langle \mathfrak{A}(u^0_i, \Th(i \circ f)^*(\omega) + \rho^{(1)}), M_{(1)} \rangle \\
&\overset{\eqref{eq_99},\,\eqref{eq_88}}{=} \sum_{j = 0}^q \langle \Sq^{q-j+1}(\mathfrak{b}_i^*(\alpha_j)), M_{(0)} \rangle + \langle \mathfrak{a}^*(\Th (i \circ f)^*(\omega))|_\Sigma + \mathfrak{a}^*(\rho^{(1)})|_\Sigma, M_{(1)} \rangle \\
&\overset{\eqref{10000}}{=} \varkappa_R(X_i^0,g_{X_i^0}).
\end{align*}

\subsubsection{Construction of \(\cob\)}
\label{sec:construction_t}
By the theorem of Thom, the unoriented cobordism class of a manifold is determined by its Stiefel--Whitney numbers. Hence, it suffices to construct for every \(z \in H^n(\Gr_n(\R^q))\) an additive function \(\cob_z \colon U \to \Z/2\) such that \(\cob_z(u_i) = \langle \tau_{X_i}^*(z), [X_i] \rangle\) for every \(i \in I\). Let \(\theta \in \widetilde Z^q(\Th f^* \bar \gamma^q_n)\) be a cocycle representing \(\varphi(f^*(z)) \in \widetilde H^q(\Th f^* \bar \gamma^q_n)\). Then we can set
\[\cob_z(u) \defeq \langle \mathfrak{A}(u, \theta), M_{(1)} \rangle.\]
Then indeed,
\begin{align*}
\cob_z(u_i) &= \langle \mathfrak{A}(u_i, \theta), M_{(1)} \rangle \overset{\text{Lm.~\ref{lm:powerful}}}{=} \langle \mathfrak{a}^*_i(\theta)|_\Sigma, M_{(1)} \rangle \\
&= \langle \mathfrak{a}^*_i(\theta), M_{(1)} \rangle = \langle \mathfrak{a}^*_i(\varphi(f^*(z))), [S^q] \rangle \\
&\overset{\text{Lm.~\ref{55}\,(1)}}{=} \langle g^*_{X_i}(f^*(z)), [X_i] \rangle = \langle \tau^*_{X_i}(z), [X_i] \rangle.
\end{align*}

\subsubsection{Construction of \(p\)}
Consider linear functionals \(\cob_t \colon U \to \Z/2,\) \(t \in T\), such that
\[\cob(u) = \sum_{t \in T} \cob_t(u) [W_t]^O, \quad u \in U.\]
Note that for every \(i \in I\) one has
\[\sum_{t \in T} c_{it} [W_t]^O = [X_i]^O = \cob(u_i) = \sum_{t \in T} \cob_t(u_i) [W_t]^O.\]
Hence, \(c_{it} = \cob_t(u_i)\) for every \(i \in I\) and \(t \in T\).

Then, we can define a linear map \(p \colon U \to U^0\):
\[p(u) \vcentcolon= u + \sum_{t \in T} \cob_t(u) I_{(W_t, g_{W_t})}.\]
One can verify that
\[p(u_i) = u_i + \sum_{t \in T} \cob_t(u_i) I_{(W_t, g_{W_t})} = u_i + \sum_{t \in T} c_{it} I_{(W_t, g_{W_t})} = I_{(X^0_i, g_{X^0_i})} = u^0_i.\]

\subsubsection{Construction of \(s\)}

Let \(\mathrm{sq} \colon \Z/2 \to \Z/4\) be defined by \(0 \mapsto 0, 1 \mapsto 1\). This map is quadratic since there is the following commuting diagram:
\[\begin{tikzcd}[cramped]
	\Z & \Z \\
	{\Z/2} & {\Z/4.}
	\arrow["{(-)^2}", from=1-1, to=1-2]
	\arrow["{\mathrm{mod}\;2}"', from=1-1, to=2-1]
	\arrow["{\mathrm{mod}\;4}", from=1-2, to=2-2]
	\arrow["{\mathrm{sq}}"', from=2-1, to=2-2]
\end{tikzcd}\]
Note that \(\Omega^R_n\) is a module over \(\Z/4\) by \eqref{eq:cobordism_ses}. Then there is a quadratic section \(s \colon \Omega^O_n \to \Omega^R_n\) to the natural projection given by
\[s(\sum_t a_t [W_t]^O) = \sum_t \mathrm{sq}(a_t) [W_t, g_{W_t}]^R.\]
Then
\[s([X_i]^O) = s(\sum_{c_{it} \ne 0} [W_t]^O) = \sum_{c_{it} \ne 0} [(W_t, g_{W_t})]^R.\]

\end{document}